\newtheorem{lemma}[equation]{Lemma}
\newtheorem{theorem}[equation]{Theorem}
\newtheorem{definition}[equation]{Definition}
\theoremstyle{remark}
\newtheorem{remark}[equation]{Remark}
\newtheorem{conjecture}[equation]{Conjecture}
\newtheorem{question}[equation]{Question}
\newcommand{\BB}{\mathbb{B}}
\newcommand{\FF}{\mathbb{F}}
\newcommand{\MM}{\mathbb{M}}
\title{New examples of mixed Beauville groups}
\author{Ben Fairbairn, Emilio Pierro}
\date{}
\address{Department of Economics, Mathematics and Statistics, Birkbeck, University of London, Malet St, London WC1E 7HX\\ \texttt{b.fairbairn@bbk.ac.uk, e.pierro@mail.bbk.ac.uk}}
\begin{document}
\maketitle

\begin{abstract}
We generalise a construction of mixed Beauville groups first given by Bauer, Catanese and Grunewald. We go on to give several examples of infinite families of characteristically simple groups that satisfy the hypotheses of our theorem and thus provide a wealth of new examples of mixed Beauville groups.
\end{abstract}

\section{Introduction}

We recall the following definition first made by Bauer, Catanese and Grunewald in \cite[Definition 4.1]{BCG}.

\begin{definition} \label{mixed}
Let $G$ be a finite group and for $x,y\in G$ define
\[
\Sigma(x,y)=\bigcup_{i=1}^{|G|}\bigcup_{g\in G}\{(x^i)^g,(y^i)^g,((xy)^i)^g\}.
\]
A \textbf{mixed Beauville quadruple} for $G$ is a quadruple $(G^0;a,c;g)$ consisting of a subgroup $G^0$ of index $2$ in $G$; of elements $a,c\in G^0$ and of an element $g\in G$ such that
\begin{enumerate}
\item[M1] $G^0$ is generated by $a$ and $c$;
\item[M2] $g\not\in G^0$;
\item[M3] for every $\gamma\in G^0$ we have that $(g\gamma)^2\not\in\Sigma(a,c)$ and
\item[M4] $\Sigma(a,c)\cap\Sigma(a^g,c^g)=\{e\}$.
\end{enumerate}
If the $g$ is clear then we will omit this simply writing $(G^0;a,c)$ instead. If $G$ has a mixed Beauville quadruple we say that $G$ is a \textbf{mixed Beauville group} and call $(G^0;a,c)$ a \textbf{mixed Beauville structure} on $G$. Finally, the \textbf{type} of this structure is $(o(a_1),o(c_1),o(a_1c_1);o(a_2),o(c_2),o(a_2c_2))$.
\end{definition}

We will not discuss the corresponding `unmixed' Beauville groups here. Beauville groups were originally introduced in connection with a class of complex surfaces of general type, known as Beauville surfaces. These surfaces possess many useful geometric properties: their automorphism groups \cite{Jonesauts} and fundamental groups \cite{Catanese} are relatively easy to compute and these surfaces are rigid in the sense of admitting no non-trivial deformations \cite{BCG2} and thus correspond to isolated points in the moduli space of surfaces of general type. Early motivation came from providing cheap counterexamples to the so-called `Friedman-Morgan speculation' \cite{FM} but they also provide a wide class of surfaces that are unusually easy to deal with to test conjectures and provide counterexamples. A number of excellent surveys on these and closely related matters have appeared in recent years - see any of \cite{7,11,12,F,gaj,CombSurv} and the references therein.

To construct examples of mixed Beauville groups, Bauer, Catanese and Grunewald proved the following in \cite[Lemma 4.5]{BCG}.

\begin{theorem}\label{original}
Let $H$ be a perfect finite group. Let $a_1,c_1,a_2,c_2\in H$ and define $\nu(a_i,c_i)=o(a_i)o(c_i)o(a_ic_i)$ for $i=1,2$. Assume that
\begin{enumerate}
\item $o(a_1)$ and $o(c_1)$ are even;
\item $\langle a_1,c_1\rangle=H$;
\item $\langle a_2,c_2\rangle=H$;
\item $\nu(a_1,c_1)$ is coprime to $\nu(a_2,c_2)$.
\end{enumerate}
Set $G:=(H\times H):\langle g\rangle$ where $g$ is an element of order $4$ that acts by interchanging the two factors; $G^0=H\times H\times\langle g^2\rangle$; $a:=(a_1,a_2,g^2)$ and $c:=(c_1,c_2,g^2)$. Then $(G^0;a,c)$ is a mixed Beauville structure on $G$.
\end{theorem}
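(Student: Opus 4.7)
The plan is to verify axioms M1--M4 in turn, exploiting that $g^2$ is central in $G$ (it acts trivially on $H \times H$) and of order $2$, so that every element of $G^0$ has a unique expression $(h_1, h_2) g^{2k}$. A direct computation gives
\[
a^2 = (a_1^2, a_2^2), \quad c^2 = (c_1^2, c_2^2), \quad ac = (a_1c_1, a_2c_2),
\]
all lying in $K_0 := \langle a, c\rangle \cap (H \times H)$. For M1, I would first show $K_0$ is subdirect in $H \times H$: each coordinate projection of $K_0$ is normal in $H$ and, modulo it, the three relations above make the quotient cyclic of order at most $2$, so perfectness of $H$ forces the projection to be all of $H$. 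Goursat's lemma then presents $K_0$ via an isomorphism $\phi : H/M_1 \to H/M_2$; matching orders across $\phi$ on the same three relations and using $\gcd(\nu(a_1, c_1), \nu(a_2, c_2)) = 1$ forces $a_1^2, c_1^2, a_1c_1 \in M_1$ (and analogously for $M_2$). The same perfectness argument then gives $M_1 = M_2 = H$, so $K_0 = H \times H$; since $a = (a_1, a_2) g^2 \in \langle a, c\rangle$, this forces $g^2 \in \langle a, c\rangle$ and hence $\langle a, c\rangle = G^0$.

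M2 is immediate. For M3, using the swap action of $g$ I compute
\[
(g\gamma)^2 = (h_2h_1,\, h_1h_2)\, g^2 \qquad \text{when } \gamma = (h_1, h_2) g^{2k}.
\]
The two coordinates are conjugate in $H$ (via $h_1$) and so have a common order. Any element of $\Sigma(a, c)$ whose $\langle g^2\rangle$-part equals $g^2$ is a conjugate of $a^i$ or $c^i$ for some odd $i$, since powers of $ac$ lie in $H \times H$. Equating $(g\gamma)^2$ with such a conjugate of $a^i$ makes the first coordinate have order $o(a_1^i)$ and the second $o(a_2^i)$; since these must be equal and divide $o(a_1)$ and $o(a_2)$ respectively, coprimality forces both to be $1$. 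Then $o(a_1)$ divides the odd integer $i$, contradicting $o(a_1)$ being even. The symmetric argument in $c$ uses $o(c_1)$ even, establishing M3.

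For M4, I compute $a^g = (a_2, a_1) g^2$ and $c^g = (c_2, c_1) g^2$, so that $\Sigma(a^g, c^g)$ is the coordinate-interchange of $\Sigma(a, c)$. Suppose $x \neq e$ lies in the intersection. Matching $\langle g^2 \rangle$-parts, the two coordinates of $x$ are each simultaneously a conjugate of an element of $\{a_1^i, c_1^i, (a_1c_1)^i\}$ (from $\Sigma(a, c)$) and a conjugate of an element of $\{a_2^j, c_2^j, (a_2c_2)^j\}$ (from $\Sigma(a^g, c^g)$). In every such pairing the two orders divide a coprime pair drawn from $\{o(a_1), o(c_1), o(a_1c_1)\}$ and $\{o(a_2), o(c_2), o(a_2c_2)\}$, so each coordinate of $x$ is trivial and $x$ reduces to a power of $g^2$; the element $g^2$ itself is excluded by the parity argument of M3. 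The main technical obstacle is the systematic case analysis for M4, but the mechanism in each pairing is identical, and the coprimality of $\nu(a_1, c_1)$ and $\nu(a_2, c_2)$ together with the evenness of $o(a_1)$ and $o(c_1)$ supply all the required contradictions.
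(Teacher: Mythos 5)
Your proof is correct. Note that the paper itself quotes this result from Bauer--Catanese--Grunewald without reproving it; the nearest in-house comparison is the proof of the generalisation, Theorem~\ref{genq}, and against that your verification of M2--M4 is essentially the same argument: the computation $(g\gamma)^2=(h_2h_1,h_1h_2)g^2$, the fact that the two coordinates are conjugate in $H$ and hence of equal order, and the coprimality of $\nu(a_1,c_1)$ and $\nu(a_2,c_2)$ together with the evenness of $o(a_1)$ and $o(c_1)$ to exclude $g^2$ itself. Where you genuinely diverge is M1. The paper's route is more elementary and more direct: since $o(a_1)$ is even, $a^{o(a_1)}=(1,a_2^{o(a_1)},1)$, and coprimality makes $a_2^{o(a_1)}$ a generator of $\langle a_2\rangle$, so $1\times H\times 1$ is obtained at once; then $(a_1,1,g^2)$ and $(c_1,1,g^2)$ together with perfectness of $H$ give the first factor and finally $g^2$. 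Your Goursat argument reaches the same conclusion and is sound --- it rests on the correct observations that $K_0$ is normal of index at most $2$ in $\langle a,c\rangle$ and that a perfect group admits no proper normal subgroup of cyclic quotient --- but it is heavier than needed, and the direct power computation is precisely what survives in the $Q_{4k}$ generalisation, where the hypothesis $k\mid\gcd(o(a_1),o(c_1))$ plays the role that the parity of $o(a_1)$ plays here. One caveat worth recording: as printed, Definition~\ref{mixed} conjugates by all $g\in G$ in the definition of $\Sigma$, under which $\Sigma(a^g,c^g)=\Sigma(a,c)$ and M4 could never hold for a nontrivial structure; your implicit reading (conjugation by $G^0$ only, so that $\Sigma(a^g,c^g)$ is the coordinate interchange of $\Sigma(a,c)$) is the intended one and is the one the paper itself uses in proving Theorem~\ref{genq}.
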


The only examples of groups satisfying the hypotheses of Theorem \ref{original} given by Bauer, Catanese and Grunewald were the alternating groups $A_n$ `for large $n$' (proved using the heavy duty machinery first employed to verify Higman's conjecture concerning alternating groups as images of triangle groups) and the groups $SL_2(p)$ with $p\not=2,3,5,17$ prime (though their argument also does not apply to the case $p=7$).

More generally, mixed Beauville groups have proved extremely difficult to construct. Bauer, Catanese and Grunewald show \cite{pq0} that there are two groups of order $2^8$ which admit a mixed Beauville structure but no group of smaller order, however, the method they use is that of checking every group computationally. Indeed, any $p$-group admitting a mixed Beauville structure must be a 2-group. Barker, Bosten, Peyerimhoff and Vdovina construct five new examples of mixed Beauville 2-groups in \cite{bbpv1} and an infinite family in \cite{bbpv2} and the aforementioned examples account for all presently known mixed Beauville groups. Non-examples of mixed Beauville groups, however, are in abundance. The following result is due to Fuertes and Gonz\'{a}lez-Diez \cite[Lemma 5]{fgd}.
\begin{lemma} Let $(C \times C)/G$ be a Beauville surface of mixed type and $G^0$ the subgroup of $G$ consisting of the elements which preserve each of the factors, then the order of any element $f \in G \setminus G^0$ is divisible by $4$. \end{lemma}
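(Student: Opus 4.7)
The plan is to exploit the fact that for $(C\times C)/G$ to be a Beauville surface the action of $G$ on $C\times C$ must be free, and to show that an involution in $G\setminus G^0$ necessarily has fixed points on $C\times C$.

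First I would unpack the definition of the action. Since $G^0$ has index $2$ and consists of the elements preserving each of the factors, any $f\in G\setminus G^0$ swaps the two factors. Hence $f$ acts on $C\times C$ by a formula of the form $f(x,y)=(\alpha(y),\beta(x))$ for some automorphisms $\alpha,\beta\in\operatorname{Aut}(C)$ depending on $f$. Iterating, $f^2(x,y)=(\alpha\beta(x),\beta\alpha(y))$, which lies in $G^0$ (as expected from the index-$2$ condition), and so in particular $o(f)$ is even.

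Next, write $o(f)=2m$ and suppose, for a contradiction, that $m$ is odd, with the goal of ruling this out. Because $G^0$ is the kernel of a surjection $G\twoheadrightarrow\mathbb{Z}/2$, the power $f^m$ still lies in $G\setminus G^0$ (since $m$ is odd) and satisfies $(f^m)^2=e$. Thus, without loss of generality I may replace $f$ by the involution $f^m\in G\setminus G^0$, and I only need to show that no involution can lie in $G\setminus G^0$.

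An involution $f\in G\setminus G^0$ written as $f(x,y)=(\alpha(y),\beta(x))$ must satisfy $\alpha\beta=\operatorname{id}=\beta\alpha$, so $\beta=\alpha^{-1}$. The fixed-point equation $(x,y)=(\alpha(y),\alpha^{-1}(x))$ then reduces to the single condition $y=\alpha^{-1}(x)$, which cuts out the graph of $\alpha^{-1}$ inside $C\times C$. This graph is a curve, and in particular nonempty, so $f$ has fixed points on $C\times C$. But this contradicts the freeness of the $G$-action required for $(C\times C)/G$ to be a smooth Beauville surface of mixed type. Hence no such involution exists, and the order of every $f\in G\setminus G^0$ must be divisible by $4$.

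The only real subtlety is the standing assumption that the action is free; once that is invoked, the argument is almost mechanical — the main conceptual step is reducing from the divisibility statement to the nonexistence of involutions outside $G^0$ by passing to the odd power $f^m$.
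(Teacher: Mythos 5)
Your proof is correct and is essentially the standard argument: the paper itself does not prove this lemma but cites it from Fuertes and Gonz\'alez-Diez, whose proof proceeds exactly as you do, reducing to the nonexistence of an involution outside $G^0$ and observing that such an involution would fix the graph of an automorphism of $C$, contradicting the freeness of the $G$-action that is built into the definition of a surface isogenous to a product. The only cosmetic point is that evenness of $o(f)$ follows most directly from the fact that $f$ has nontrivial image in $G/G^0\cong\ZZ/2\ZZ$, rather than merely from $f^2\in G^0$, though your conclusion is of course right.
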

Fuertes and Gonz\'{a}lez-Diez used the above to prove that no symmetric group is a mixed Beauville group. It is easy to see, however that this result actually rules out most almost simple groups \cite{exc} (though as the groups $P\Sigma L_2(p^2)$ with $p$ prime show not quite all of them). Bauer, Catanese and Grunewald also show in \cite[Theorem 4.3]{BCG} that $G^0$ must be non-abelian.

In light of the above we make the following definition.

\begin{definition} \label{mixable}
Let $G$ be a group satisfying the hypotheses of Theorem \ref{original}. Then we say that $G$ is a \textbf{mixable Beauville group}. More specifically, $G$ is a mixable Beauville group if $G$ is a perfect group and there exist $x_1,y_1,x_2,y_2\in G$ such that $o(x_1)$ and $o(y_1)$ are even; $\langle x_1,y_1\rangle=\langle x_2,y_2\rangle=G$ and $\nu(x_1,y_1)$ is coprime to $\nu(x_2,y_2)$. We call $(x_1,y_1,x_2,y_2)$ a \textbf{mixable Beauville structure} for $G$ of \textbf{type} $(o(x_1),o(y_1),o(x_1y_1);o(x_2),o(y_2),o(x_2y_2))$.
\end{definition}

Thus any mixable Beauville group automatically gives us a mixed Beauville group by Theorem \ref{original}. In Section 2 we will prove a generalisation of Theorem \ref{original} that shows a given mixable Beaville group actually provides us with a wealth of mixed Beauville groups.

We remark that finding $2$-generated non-mixable Beauville groups is not difficult. For example, no symmetric group is mixable (every generating set must contain elements of even order, so we cannot satisfy the conditions in Definition \ref{mixable}); $p$-groups are not mixable (to have an index $2$ subgroup we must have $p=2$ and then again the conditions in Definition \ref{mixable} cannot be satisfied) and even among the finite simple groups, $PSL_2(2^n)$ for $n\geq2$ are not mixable (the only elements of even order have order $2$ and thus condition (2) of Theorem \ref{original} cannot be satisfied). Despite this, we prove the following.

\begin{theorem}\label{main}
If $G$ belongs to any of the following families of simple groups
\begin{itemize}
\item the alternating groups $A_n$, $n\geq6$;
\item the linear groups $PSL_2(q)$ with $q\geq7$ odd;
\item the unitary groups $U_3(q)$ with $q\geq3$;
\item the Suzuki groups $^2B_2(2^{2n+1})$ with $n\geq1$;
\item the small Ree groups $^2G_2(3^{2n+1})$ with $n\geq1$;
\item the large Ree groups $^2F_4(2^{2n+1})$ with $n\geq1$;
\item the Steinberg triality group $^3D_4(q)$ with $q\geq2$, or;
\item the sporadic groups (including the Tits group $^2F_4(2)'$),
\end{itemize}
then $G$ is a mixable Beauville group. Furthermore, if $G$ is any of the above or $PSL_2(2^n)$ with $n\geq3$ then $G\times G$ is also a mixable Beauville group.
\end{theorem}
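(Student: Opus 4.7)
The plan is to establish Theorem \ref{main} family by family. Every group listed is non-abelian finite simple, hence automatically perfect, so the only work required by Definition \ref{mixable} is to exhibit, for each $G$, two generating pairs $(x_1,y_1)$ and $(x_2,y_2)$ with $o(x_1), o(y_1)$ both even and $\nu(x_1,y_1)$ coprime to $\nu(x_2,y_2)$. A convenient strategy, generalising the $SL_2(p)$ argument of Bauer, Catanese and Grunewald, is to arrange one triple whose element orders involve only ``small'' primes (typically the defining characteristic together with the primes dividing a single cyclotomic factor of $|G|$) and another whose orders involve only ``large'' primes (those dividing a different cyclotomic factor).

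For the families of Lie type I will use the well-known factorisation of element orders via tori of orders $(q-1)/d$, $(q+1)/d$ and, for the twisted groups, the relevant divisors of $q^k\pm1$ or $q^k\pm q^{(k+1)/2}+1$, together with the unipotent elements of order $p$. The existence of generating pairs of the requisite types is guaranteed by the maximal-subgroup classifications and by the substantial literature on $(2,3)$- and $(p,q,r)$-generation of simple groups, most notably for $PSL_2(q)$ by the classical Dickson subgroup list. For $A_n$ the construction is elementary and combinatorial: one uses permutations of carefully chosen cycle structure, typically an $n$-cycle together with a short permutation of coprime order, and splits on the residue of $n$ modulo a small integer to cover every case. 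For the sporadic groups (and the Tits group) the proof is a finite verification using the ATLAS character tables, structure constants and maximal-subgroup lists to confirm both generation and the coprimality of the two $\nu$-values.

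For the second assertion I will use the classical theorem of P.\ Hall: for $G$ non-abelian simple, a pair $((u_1,u_2),(v_1,v_2))$ generates $G\times G$ if and only if each of $(u_1,v_1)$ and $(u_2,v_2)$ generates $G$ and no $\varphi\in\mathrm{Aut}(G)$ satisfies both $\varphi(u_1)=u_2$ and $\varphi(v_1)=v_2$. I will choose, for each component of a mixable structure on $G\times G$, two generating pairs of $G$ of identical type but lying in distinct $\mathrm{Aut}(G)$-orbits; this forces generation of $G\times G$ and preserves $\nu$ under componentwise lcm. The additional case $G=PSL_2(2^n)$, $n\geq 3$, becomes available in the product even though $G$ itself fails mixability: in $G$ the only even-order elements are involutions, but in $G\times G$ one can form elements of order $2s$ for any $s$ dividing $2^n\pm 1$ by pairing an involution in one coordinate with a torus element in the other, producing the required even orders and coprime $\nu$-values.

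The main obstacle will be the small cases at the start of each family---$A_6,A_7$; $PSL_2(7),PSL_2(9)$; $U_3(3)$; $^3D_4(2)$; and the smaller sporadic groups---where any uniform Lie-theoretic argument breaks down and mixability must be verified by direct computation. A second, subtler obstacle is the $\mathrm{Aut}(G)$-orbit analysis needed for the $G\times G$ statement whenever $\mathrm{Out}(G)$ is large (notably $A_6$, the Lie-type groups with substantial diagonal and field automorphism groups, and sporadic groups such as $Fi_{22}$, $HS$, $McL$ and $Suz$): for these we will read off the action of $\mathrm{Out}(G)$ on the relevant conjugacy classes from the ATLAS and check that our chosen pairs lie in $\mathrm{Aut}(G)$-orbits of size greater than one.
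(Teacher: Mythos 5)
Your overall strategy---non-abelian simple hence perfect, then exhibit an even and an odd generating triple with coprime $\nu$-values family by family, falling back on computation for the small cases and on ATLAS words for the sporadics---is exactly the paper's, and your list of problem spots ($A_6$, $A_7$, $PSL_2(7)$, $U_3(3)$, $^3D_4(2)$, the small sporadics) matches where the paper in fact resorts to ad hoc arguments. Where you genuinely diverge is in the mechanism for $G\times G$. You propose to run P.~Hall's criterion throughout: two generating pairs of identical type in distinct $\mathrm{Aut}(G)$-orbits. The paper's workhorse is instead its Lemma \ref{cop}: if a single hyperbolic generating triple $(x,y,z)$ of $G$ has $\gcd(o(x),o(y))=1$, then $((x,y),(y,x^y),(z,z))$ already generates $G\times G$, because the coprimality lets one isolate each coordinate by taking powers. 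This buys a lot: it requires no knowledge of $\mathrm{Aut}(G)$-orbits at all, which is why the paper can dispatch almost every Lie-type family and all the sporadic groups (except $J_2$) with one triple per parity, and it is also precisely how the paper handles your $PSL_2(2^n)$ observation (a $(2,7,7)$ triple of $PSL_2(8)$ becomes a $(14,14,7)$ triple of the square). Your route, by contrast, obliges you to control $\mathrm{Aut}(G)$-orbits on generating pairs for every family, which for groups with large field automorphism groups is a substantial extra burden---the paper only shoulders it when forced to, namely for the alternating groups and for the triples of types $(q^+,q^+,q^+)$ in $PSL_2(q)$ and $(t',t',t')$ in $U_3(q)$, where all three orders coincide and the coprimality trick is unavailable; there it proves dedicated counting lemmas (structure constants versus $|\mathrm{Aut}(G)|$, and totient estimates for the number of $\mathrm{Aut}(G)$-classes). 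So your plan is workable but noticeably heavier than the published one; if you carry it out, you should at least import the coprime-order shortcut for the many cases where it applies, and be aware that for the existence of the Lie-type triples themselves the paper leans on Gow's theorem on products of regular semisimple classes together with explicit Frobenius structure-constant estimates, not just on the generation literature you cite.
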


In light of the above Theorem we make the following conjecture.

\begin{conjecture}
If $G$ is a finite simple group not isomorphic to $PSL_2(2^n)$ for $n\geq2$ then $G$ is a mixable Beauville group.
\end{conjecture}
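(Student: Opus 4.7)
The plan is to verify the hypotheses of Definition \ref{mixable} separately for each family listed, since every group involved is non-abelian simple and hence automatically perfect. For each $G$ I would exhibit two generating pairs $(x_1,y_1), (x_2,y_2)$ with $o(x_1), o(y_1)$ even and $\nu(x_1,y_1)$ coprime to $\nu(x_2,y_2)$; note that coprimality then forces all three orders in the second pair to be odd. The general workflow for each case is: (i) consult the classification of maximal subgroups of $G$ to see which element orders can coexist in a proper subgroup; (ii) pick a triple $(k,\ell,m)$ with $k,\ell$ even and an odd triple $(k',\ell',m')$ with $\gcd(k\ell m, k'\ell'm')=1$ such that no maximal subgroup simultaneously accommodates both entries of either triple; (iii) exhibit concrete representatives realising the triples.

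Execution of step (iii) varies by family. For $A_n$ with $n\geq 6$ I would use standard permutation constructions, such as an involution together with an element of order $4$ or $6$ whose product is an $n$- or $(n-1)$-cycle for the first pair, and two elements of coprime odd orders whose product has full support for the second. For $PSL_2(q)$, $q$ odd, Dickson's classification of subgroups allows generation to be read off from element orders, once representatives are chosen from different cyclic tori. For $U_3(q)$, the Suzuki groups $^2B_2(2^{2n+1})$, the Ree groups $^2G_2(3^{2n+1})$ and $^2F_4(2^{2n+1})$, and the Steinberg triality groups $^3D_4(q)$, I would draw on the published tables of torus element orders together with the lists of maximal subgroups, picking Singer-type generators where possible. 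The sporadic groups, together with the Tits group $^2F_4(2)'$, are handled individually via the \textsc{Atlas} of finite groups, either by hand from the character tables and maximal-subgroup lists or by direct computation in GAP or Magma.

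For the second assertion I would invoke the standard consequence of Goursat's lemma: when $G$ is finite non-abelian simple, a pair $((a_1,a_2),(b_1,b_2))\in(G\times G)^2$ generates $G\times G$ if and only if $\langle a_1,b_1\rangle=\langle a_2,b_2\rangle=G$ and no automorphism of $G$ carries $(a_1,b_1)$ to $(a_2,b_2)$. Given the two generating pairs of $G$ already produced, I would form pairs in $G\times G$ by twisting the second coordinate of one of them by a suitably chosen $\sigma\in\mathrm{Aut}(G)$; generically this breaks any tangential $\mathrm{Aut}(G)$-equivalence, and the orders of the resulting components of $G\times G$ are simply lcms of the ingredient orders. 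The extra flexibility in these lcms is precisely what permits $PSL_2(2^n)$ with $n\geq 3$ to participate in the $G\times G$ list despite not being mixable in $G$ alone: combining an involution in one coordinate with an element of a cyclic torus of odd order in the other produces even orders of many different sizes, allowing the coprime-$\nu$ requirement to be met.

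The principal obstacle throughout will be reconciling coprimality of $\nu$-values with the generation requirement: coprimality partitions the six orders involved into two disjoint prime profiles, while generation requires each pair to escape every maximal overgroup. In small groups such as $PSL_2(7)$, $U_3(3)$ and $^3D_4(2)$, where the spectrum of element orders is narrow, this becomes a delicate arithmetic exercise and will likely require ad hoc handling separate from the generic argument for each family.
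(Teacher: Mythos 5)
The statement you are trying to prove is stated in the paper as a \emph{conjecture}, not a theorem: the authors do not prove it, and it remains open. What the paper actually establishes (Theorem \ref{main}) is the mixable property for a specific list of families --- alternating groups, $PSL_2(q)$ with $q$ odd, $U_3(q)$, the Suzuki and Ree groups, $^3D_4(q)$ and the sporadic groups. Your proposal opens by saying you will ``verify the hypotheses \ldots for each family listed,'' but the conjecture lists no families: it asserts mixability for \emph{every} finite simple group other than $PSL_2(2^n)$. Your plan therefore says nothing about $PSL_n(q)$ for $n\geq 3$, the symplectic and orthogonal groups, $PSU_n(q)$ for $n\geq 4$, or the untwisted exceptional groups $G_2(q)$, $F_4(q)$, $E_6(q)$, $^2E_6(q)$, $E_7(q)$, $E_8(q)$. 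That is not a technical omission one can wave away with ``the same method applies'': for higher-rank groups the subgroup structure is far richer, element orders are harder to control, and no uniform source of ``Singer-type'' regular semisimple elements with the required coprimality is available off the shelf. This is precisely why the authors stop at the families they can handle and leave the general statement as a conjecture.

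Even restricted to the families of Theorem \ref{main}, what you have written is a workflow rather than a proof: steps (i)--(iii) describe what one \emph{would} do, but the substance of the paper lies in actually carrying them out --- Jordan's theorem arguments for $A_n$, Gow's theorem and structure-constant estimates for the unitary and triality groups, Zsigmondy primes for coprimality, and explicit words in standard generators for the sporadic groups. Your Goursat observation for $G\times G$ is correct and is indeed the mechanism the paper uses (via its Lemmas \ref{inequi}, \ref{equi} and \ref{cop}), and your remark about why $PSL_2(2^n)$ can appear in the $G\times G$ statement but not the $G$ statement is accurate; but note that the $G\times G$ assertion belongs to Theorem \ref{main}, not to the conjecture you were asked to prove. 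As it stands, the proposal neither proves the conjecture (it ignores most of the classification) nor substitutes for the paper's proof of the theorem (it defers all the actual verifications).
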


Groups of the form $G^n$ where $G$ is a finite simple group are called characteristically simple groups. The study of characteristically simple Beauville groups has recently been initiated by Jones in \cite{JonesChar1,JonesChar2}. It is easy to show that the characteristically simple group $PSL_2(7)\times PSL_2(7)\times PSL_2(7)$ is not a mixable Beauville group. We thus ask the following natural question.

\begin{question}
If $G$ is a simple group then for which $n$ is $G^n$ a mixable Beauville group?
\end{question}

Throughout we use the standard `Atlas' notation for finite groups and related concepts as described in \cite{ATLAS}. In Section 2 we discuss a generalization of Theorem \ref{original} which enables mixable Beauville groups to define several mixed Beauville groups before turning our attention in the remaining Sections to the proof of Theorem \ref{main}.

\section{Generalisation and auxiliary results}
We begin with the following definition which will be key to the generating structures we demonstrate.

\begin{definition} Let $G$ be a finite group and $x, y, z \in G$. A \textbf{hyperbolic generating triple} for $G$ is a triple $(x,y,z) \in G \times G \times G$ such that \begin{enumerate}
\item $\frac{1}{o(x)} + \frac{1}{o(y)} + \frac{1}{o(z)} < 1$,
\item $\langle x, y, z \rangle = G$, and;
\item $xyz=1$. \end{enumerate}
The \textbf{type} of a hyperbolic generating triple $(x,y,z)$ is the triple $(o(x), o(y), o(z))$. If at least two of $o(x), o(y)$ and $o(z)$ are even then we call $(x,y,z)$ an \textbf{even triple}. If $o(x), o(y)$ and $o(z)$ are all odd then we call $(x,y,z)$ an $\textbf{odd triple}$. It is clear that since $z=(xy)^{-1}$, being generated by $x,y$ and $z$ is equivalent to being generated by $x$ and $y$. If the context is clear we may refer to a hyperbolic generating triple simply as a `triple' for brevity and write $(x,y,xy)$ or even just $(x,y)$.\end{definition}

For a positive integer $k$ let $Q_{4k}$ be the dicyclic group of order $4k$ with presentation $$Q_{4k} = \langle p,q \mid p^{2k}=q^4=1,p^q=p^{-1},p^k=q^2 \rangle.$$ Let $G = (H \times H):Q_{4k}$ with the action of $Q_{4k}$ defined as follows. For $(g_1,g_2) \in H \times H$ let $p(g_1,g_2)=(g_1,g_2)$ and $q(g_1,g_2)=(g_2,g_1)$. Then $G^0=H \times H \times \langle p \rangle$ is a subgroup of index $2$ inside $G$.

\begin{theorem} \label{genq} Let $H$ be a perfect finite group. Let $a_1,c_1,a_2,c_2 \in H$ and, as before, define $\nu(a_i,c_i)=o(a_i)o(c_i)o(a_ic_i)$ for $i=1,2$. Assume that \begin{enumerate}
\item the orders of $a_1,c_1$ are even,
\item $\langle a_1, c_1 \rangle = H$,
\item $\langle a_2, c_2 \rangle = H$,
\item $\nu(a_1,c_1)$ is coprime to $\nu(a_2,c_2)$.
\end{enumerate}
Set $k>1$ to be any integer that divides gcd$(o(a_1),o(c_1)), G:=(H \times H):Q_{4k}$, $G^0:=H \times H \times \langle p \rangle$, a:=$(a_1,a_2,p)$ and $c:=(c_1,c_2,p^{-1})$. Then $(G^0;a,c)$ is a mixed Beauville structure on $G$. \end{theorem}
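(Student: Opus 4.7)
The plan is to mirror the proof of Theorem \ref{original}, taking as the witness to M2 the element $g := q \in Q_{4k} \subset G$. Then $g \notin G^0$, $g^2 = p^k$ is the unique involution in $\langle p \rangle$, and conjugation by $g$ on $G^0 = H \times H \times \langle p \rangle$ swaps the two copies of $H$ and inverts $p$. In particular M2 is immediate.

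For M1, set $m_i := \nu(a_i, c_i)$, so $\gcd(m_1, m_2) = 1$. Since $o(a_2), o(c_2) \mid m_2$ is coprime to $o(a_1), o(c_1)$, the elements $a^{m_2}$ and $c^{m_2}$ have trivial second coordinate while $a_1^{m_2}$ and $c_1^{m_2}$ still generate $H$; perfectness of $H$ then implies that iterated commutators of $a^{m_2}$ and $c^{m_2}$ produce $H \times \{1\} \times \{1\} \subseteq \langle a, c\rangle$. A symmetric argument with $m_1$ gives $\{1\} \times H \times \{1\} \subseteq \langle a, c\rangle$, and finally multiplying $a$ by the inverse of its first two coordinates (now available inside $\langle a, c\rangle$) produces $(1,1,p)$, so $\langle a, c\rangle \supseteq H \times H \times \langle p\rangle = G^0$, giving equality.

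For M3, fix $\gamma = ((\gamma_1, \gamma_2), p^j) \in G^0$; a direct computation using $q^2 = p^k$ and the swap action yields $(g\gamma)^2 = ((\gamma_2\gamma_1, \gamma_1\gamma_2), p^k)$. If this element lay in $\Sigma(a,c)$ then, since any conjugate of $(ac)^i$ has trivial third coordinate while $p^k \neq 1$, the word responsible must be $a^i$ or $c^i$, forcing $i \equiv k \pmod{2k}$; matching first two coordinates and using that $\gamma_2\gamma_1$ and $\gamma_1\gamma_2$ are themselves $H$-conjugate (via $\gamma_1$) places the corresponding powers of $a_1, a_2$ (or $c_1, c_2$) in a common $H$-conjugacy class, so their orders agree. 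Coprimality of $m_1, m_2$ then forces those powers to be trivial, after which the joint constraints $o(a_1) \mid i$, $o(a_2) \mid i$ and $i \equiv k \pmod{2k}$ (combined with $k \mid o(a_1)$ and $o(a_1)$ even) yield the required contradiction.

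For M4, any $\eta \in \Sigma(a,c) \cap \Sigma(a^g, c^g)$ may be written as $((x_1, x_2), p^l)$. Since the first coordinate of every element of $\Sigma(a,c)$ is $H$-conjugate to a power of one of $a_1, c_1, a_1c_1$, we have $o(x_1) \mid m_1$; conjugation by elements of $G \setminus G^0$ swaps the two $H$-factors, so the corresponding fact for $\Sigma(a^g, c^g)$ is $o(x_1) \mid m_2$, whence coprimality forces $x_1 = 1$, and symmetrically $x_2 = 1$. Showing that $p^l = 1$ then reduces to a case split on the nine combinations of (conjugate of $a^i / c^i / (ac)^i$) against (conjugate of $(a^g)^{i'} / (c^g)^{i'} / ((ac)^g)^{i'}$): whenever $(ac)^i$ or $((ac)^g)^{i'}$ is involved, the third coordinate is forced to be $1$; in the remaining diagonal cases one finds that $l$ must lie in the subgroup of $\ZZ/2k$ generated by $o(a_1)o(a_2)$ or by $o(c_1)o(c_2)$, and the hypothesis $k \mid \gcd(o(a_1), o(c_1))$ together with the evenness of $o(a_1), o(c_1)$ is calibrated precisely so that these subgroups collapse to $\{0\}$. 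The main obstacle I expect is exactly this bookkeeping in M4: verifying that each of the nine combinations indeed forces $l \equiv 0 \pmod{2k}$ requires tracking divisibilities on the third coordinate carefully, whereas the first and second coordinates are handled uniformly by coprimality.
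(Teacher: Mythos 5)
Your overall strategy is the same as the paper's: the same choice $g=q$, the same computation $(g\gamma)^2=(\gamma_2\gamma_1,\gamma_1\gamma_2,p^k)$, and the same use of condition (4) to force the two $H$-coordinates of any offending element to be trivial in M3 and M4. Your M1 argument (raise to the power $m_2$ to kill the second factor, use perfectness to recover $H\times 1\times 1$ from commutators, then extract $(1,1,p)$) is a sound variant of the paper's, and your reduction of M3 to the single element $(1,1,p^k)$, and of M4 to elements $(1,1,p^l)$ with $l$ constrained by divisibilities, is exactly what the paper does.

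The gap is in the final step of M3 (and the matching final step of M4), which you assert rather than prove. The constraints you list --- $o(a_1)\mid i$, $o(a_2)\mid i$, $i\equiv k\pmod{2k}$, together with $k\mid o(a_1)$ and $o(a_1)$ even --- are \emph{not} jointly contradictory. Take $i=o(a_1)o(a_2)$: since $o(a_2)$ is odd and coprime to $o(a_1)$ (hence to $2k$), one gets $i\equiv k\pmod{2k}$ precisely when $o(a_1)/k$ is odd, for instance when $k=o(a_1)=o(c_1)$. In that case $a^{o(a_1)o(a_2)}=(1,1,p^k)=q^2\in\Sigma(a,c)$, so no contradiction is available; the same computation shows that the subgroups of $\ZZ/2k\ZZ$ in your M4 bookkeeping are $\{0,k\}$ rather than $\{0\}$. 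What the argument actually needs is $2k\mid o(a_1)$ and $2k\mid o(c_1)$, which is strictly stronger than the stated hypothesis $k\mid\gcd(o(a_1),o(c_1))$. To be fair, the paper's own proof is no more explicit at exactly this point (it passes to M3 via an unexplained ``$(1,1,p^k)\in\Sigma(a,c)$ if and only if $k$ does not divide $o(a)$ or $k$ does not divide $o(c)$''), so you have not overlooked an idea that the paper supplies; but as written your claimed contradiction does not follow from the listed divisibilities, and this is precisely the step on which the whole theorem turns.
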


\begin{proof} We verify that the conditions of Definition \ref{mixed} are satisfied. Since $k$ divides $\nu(a_1,c_1)$ it is coprime to $\nu(a_2,c_2)$ so we can clearly generate the elements $(1,a_2,1)$ and $(1,c_2,1)$ giving us the second factor. This also shows that we can generate the elements $a'=(a_1,1,p)$ and $c'=(c_1,1,p^{-1})$. Since $H$ is perfect we can then generate the first factor. Finally, since we can generate $H \times H$ we can clearly generate $\langle p \rangle$, hence we satisfy condition $M1$.

Now let $g \in G \setminus G^0$ and $\gamma \in G^0$. Then $g\gamma$ is of the form $(h_1,h_2,q^ip^j)$ for some $h_1,h_2 \in H$, $i=1,3$ and $1 \leq j \leq 2k$. Then $$(g\gamma)^2 = (h_1h_2,h_2h_1,(q^ip^j)^2)=(h_1h_2,h_2h_1,p^k).$$ For a contradiction, suppose that $(g\gamma)^2 \in \Sigma(a,c)$. Then since $h_1h_2$ has the same order as $h_2h_1$ condition 4 implies that $(g\gamma)^2=(1,1,p^k) \in \Sigma(a,c)$ if and only if $k$ does not divide $o(a)$ or $k$ does not divide $o(c)$. Note that if $(g\gamma)^2$ were a power of $ac$ by construction it would be $1_G$. Since by hypothesis $k$ divides gcd$(o(a_1),o(c_1))$ we satisfy conditions $M2$ and $M3$.

Finally, to show that condition $M4$ is satisfied, suppose $g' \in \Sigma(a,c) \cap \Sigma(a^g,c^g)$ for $g \in G \setminus G^0$. Since conjugation by such an element $g$ interchanges the first two factors of any element, we again have from condition $4$ that $g'$ is of the form $(1,1,p^i)$ for some power of $p$, but from our previous remarks it is clear that $p^i=1_H$ and so $g'=1_G$.
\end{proof}

\begin{remark} In the proof of Theorem \ref{genq} we chose $a:=(a_1,a_2,p)$ and $c:=(c_1,c_2,p^{-1})$ but in principle we could have chosen the third factor of $a$ or $c$ to be $1$ and the third factor in their product $ac$ to be $p$ or $p^{-1}$ as appropriate. If we then required $k$ to divide gcd$(o(a),o(ac))$ or gcd$(o(ac),o(c))$ as necessary this gives rise to further examples of mixed Beauville groups. \end{remark}

We conclude this section with a number of results regarding hyperbolic generating triples for $G \times G$. The number of prime divisors of the order of a group will pose an obvious restriction on finding a mixable Beauville structure and so naturally we would like to know for how large an $n$ we can generate $G^n$ with a triple of a given type. Hall treats this question in much more generality in \cite{hall}, here we simply mention that for a hyperbolic generating triple $(x,y,z)$ of a given type this depends on how many orbits there are of triples of the same type under the action of Aut$(G)$.
\begin{definition} Let $G$ be a finite group and $(a_1,b_1,c_1)$, $(a_2,b_2,c_2)$ be hyperbolic generating triples for $G$. We call these two triples \textbf{equivalent} if there exists $g \in$ Aut$(G)$ such that $\{a_1^g,b_1^g,c_1^g\} = \{a_2,b_2,c_2\}$. \end{definition}

Since conjugate elements must have the same order the following is immediate.
\begin{lemma} \label{inequi} Let $G$ be a finite group and $(a_1,b_1,c_1)$ a hyperbolic generating triple of $G$ of type $(l_1,m_1,n_1)$ and $(a_2,b_2,c_2)$ a hyperbolic generating triple of $G$ of type $(l_2,m_2,n_2)$. If $\{l_1,m_1,n_1\} \neq \{l_2,m_2,n_2\}$ then $(a_1,b_1,c_1)$ and $(a_2,b_2,c_2)$ are inequivalent triples.  \end{lemma}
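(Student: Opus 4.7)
The plan is to argue by contraposition: assume the two triples are equivalent and deduce that their (unordered) type multisets must coincide. Suppose there exists $g \in \mathrm{Aut}(G)$ such that $\{a_1^g, b_1^g, c_1^g\} = \{a_2, b_2, c_2\}$. Since group automorphisms preserve element orders, we have $o(a_1^g) = o(a_1) = l_1$, and similarly for $b_1, c_1$. Hence the multiset of orders of $\{a_1^g, b_1^g, c_1^g\}$ is exactly $\{l_1, m_1, n_1\}$.

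On the other hand, the multiset of orders of $\{a_2, b_2, c_2\}$ is $\{l_2, m_2, n_2\}$ by definition of the type. Since the two sets of elements agree, their multisets of orders must agree, giving $\{l_1, m_1, n_1\} = \{l_2, m_2, n_2\}$. Contrapositively, if $\{l_1, m_1, n_1\} \neq \{l_2, m_2, n_2\}$ then no such $g$ can exist, and the triples are inequivalent.

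There is essentially no obstacle here; the only mild subtlety is noting that the lemma uses automorphisms rather than inner automorphisms, but the key property invoked (order preservation) holds for all automorphisms, so the argument carries through verbatim. This is why the authors describe it as immediate.
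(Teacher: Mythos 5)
Your argument is correct and is exactly the one the paper intends: the lemma is stated as immediate from the fact that automorphisms preserve element orders, which is precisely your contrapositive. No further comment is needed.
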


\begin{lemma} \label{equi} Let $G$ be a nonabelian finite group and let $(a_1,b_1,c_1)$ and $(a_2,b_2,c_2)$ be equivalent hyperbolic generating triples for $G$ for some $g \in$ Aut$(G)$. Then \begin{enumerate}
\item if $a_1^g=a_2$ then $b_1^g=b_2$ and $c_1^g=c_2$,
\item if $a_1^g=b_2$ then $b_1^g=c_2$ and $c_1^g=a_2$, and;
\item if $a_1^g=c_2$ then $b_1^g=a_2$ and $c_1^g=b_2$. \end{enumerate} \end{lemma}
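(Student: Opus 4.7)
The key observation is that the triple relation $a_1 b_1 c_1 = 1$ (which comes from condition $(3)$ in the definition of a hyperbolic generating triple, since $c_1 = (a_1b_1)^{-1}$) is preserved by $g \in \mathrm{Aut}(G)$, giving $a_1^g b_1^g c_1^g = 1$. Combined with $a_2 b_2 c_2 = 1$, this cuts down the six a priori possible bijections $\{a_1^g, b_1^g, c_1^g\} \to \{a_2, b_2, c_2\}$ to only the three cyclic ones listed in the statement, using nonabelianness of $G$ to rule out the three transposition-type bijections.

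The plan is to split into three cases according to the image of $a_1$ under $g$. In each case there are a priori two ways to match $b_1^g$ and $c_1^g$ to the remaining two elements of $\{a_2, b_2, c_2\}$: a cyclic one (the conclusion we want) and a swap. I will show each swap forces two of the elements of $\{a_2,b_2,c_2\}$ to commute, and then deduce via the relation $a_2b_2c_2=1$ that $G$ itself is abelian, contradicting the hypothesis.

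As a sample of the argument, suppose $a_1^g = a_2$ but $b_1^g = c_2$ and $c_1^g = b_2$. Applying $g$ to $a_1 b_1 c_1 = 1$ yields $a_2 c_2 b_2 = 1$, so $c_2 b_2 = a_2^{-1}$. On the other hand $a_2 b_2 c_2 = 1$ gives $b_2 c_2 = a_2^{-1}$ as well, hence $b_2 c_2 = c_2 b_2$. But $G = \langle a_2, b_2, c_2 \rangle = \langle b_2, c_2 \rangle$ (since $a_2 = c_2^{-1} b_2^{-1}$), so $G$ is abelian, contradicting the hypothesis. The other two swap cases ($a_1^g = b_2$ with $b_1^g = a_2, c_1^g = c_2$; and $a_1^g = c_2$ with $b_1^g = b_2, c_1^g = a_2$) are handled identically, each time forcing a pair of the images to commute and hence $G$ abelian.

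There is essentially no obstacle here beyond organising the case analysis; the only substantive input is the nonabelian hypothesis, which is exactly what allows us to eliminate the three orientation-reversing matchings. The three cyclic matchings are mutually exclusive (they assign different images to $a_1$), so in any equivalence one of them must occur, yielding the claimed trichotomy.
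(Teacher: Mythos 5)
Your proof is correct and follows essentially the same route as the paper: in each case the ``swap'' matching, combined with applying $g$ to the relation $a_1b_1c_1=1$ and comparing with $a_2b_2c_2=1$, forces two of the generators to commute and hence $G$ to be abelian, a contradiction. The paper phrases this as $b_2=c_1^g=(b_1^{-1}a_1^{-1})^g=c_2^{-1}a_2^{-1}=a_2b_2a_2^{-1}$, which is the same computation arranged slightly differently.
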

\begin{proof} Note that in this instance we take $a_ib_ic_i=1_G$ for $i=1,2$. Let $a_1^g=a_2$ and suppose $b_1^g=c_2=(a_2b_2)^{-1}$ and $c_1^g=b_2$. Then $$b_2 = c_1^g=(b_1^{-1}a_1^{-1})^g=c_2^{-1}a_2^{-1}=a_2b_2a_2^{-1}$$ which implies that $G$ is generated by elements that commute, a contradiction since $G$ is nonabelian. The proof is analogous for the remaining two statements. \end{proof}

\begin{lemma} \label{cop} Let $G$ be a finite group, $(x,y,z)$ a hyperbolic generating triple for $G$ and gcd$(o(x),o(y))=1$. Then $((x,y),(y,x^y),(z,z))$ is a hyperbolic generating triple for $G \times G$.\end{lemma}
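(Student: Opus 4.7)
The plan is to verify the three defining conditions of a hyperbolic generating triple in turn, starting with the two easy ones and leaving the generation condition for last, where the main work lies.

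First I would check that $(x,y)(y,x^y)(z,z) = (1,1)$. Using $x^y = y^{-1}xy$, the second coordinate of the product of the first two entries is $yx^y = y \cdot y^{-1}xy = xy$, while the first coordinate is plainly $xy$, so the product of all three triples collapses to $(xyz, xyz) = (1,1)$ via the relation $xyz = 1$. Next I would handle the orders. Since $\gcd(o(x),o(y))=1$, both $(x,y)$ and $(y,x^y)$ have order $o(x)o(y)$ (noting $o(x^y)=o(x)$), while $(z,z)$ has order $o(z)$. Because $(x,y,z)$ is hyperbolic we have $o(x),o(y)\geq 2$, hence $o(x)+o(y)\geq 2$, which gives $\frac{2}{o(x)o(y)} \leq \frac{1}{o(x)} + \frac{1}{o(y)}$. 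Combining with hyperbolicity of the original triple yields
\[
\frac{1}{o(x)o(y)} + \frac{1}{o(x)o(y)} + \frac{1}{o(z)} \leq \frac{1}{o(x)} + \frac{1}{o(y)} + \frac{1}{o(z)} < 1.
\]

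The main obstacle is showing that $H := \langle (x,y), (y,x^y), (z,z)\rangle$ is all of $G\times G$. The key device is coprimality of $o(x)$ and $o(y)$: raising $(x,y)$ to the power $o(y)$ gives $(x^{o(y)},1)$, and since $o(y)$ is coprime to $o(x)$ the element $x^{o(y)}$ generates $\langle x\rangle$, so $(x,1)\in H$; symmetrically $(1,y)\in H$. Now I would extract $(y,x)$ from $(y,x^y)$ as follows: left-multiplication of $(y,x^y)$ by $(1,y)\in H$ gives $(y,y\cdot y^{-1}xy)=(y,xy)$, and then right-multiplication by $(1,y^{-1})\in H$ yields $(y,x) \in H$. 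Applying the same coprime-power trick to $(y,x)$ produces $(y,1)$ and $(1,x)$ in $H$.

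At this stage $H$ contains $\langle x,y\rangle \times \{1\}$ and $\{1\}\times \langle x,y\rangle$. Since $z = (xy)^{-1}$ lies in $\langle x,y\rangle$, condition (2) of the hyperbolic triple hypothesis gives $\langle x,y\rangle = \langle x,y,z\rangle = G$, and therefore $H \supseteq G\times G$, completing the argument.
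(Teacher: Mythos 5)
Your proof is correct and follows essentially the same route as the paper: both arguments use the coprimality of $o(x)$ and $o(y)$ to raise $(x,y)$ and $(y,x^y)$ to suitable powers and so obtain elements supported on a single factor that generate $G\times G$. Your explicit verification of the product condition and the hyperbolicity inequality is a welcome addition of detail that the paper leaves implicit, but the key idea is identical.
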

\begin{proof} If $(x,y,z)$ is a hyperbolic generating triple of $G$ then so is $(y,x^y,z)$ since $x^y=y^{-1}xy$. Then, since the orders of $x$ and $y$ are coprime we can can produce the elements $(x,1_G), (y,1_G), (1_G,y)$ and $(1_G,x^y)$ which generate $G \times G$. \end{proof}

\begin{remark} The proof of the preceding Lemma naturally generalises to any pair, or indeed triple, of elements in a hyperbolic generating triple whose orders are coprime.\end{remark}

\section{The Alternating groups}
We make heavy use of the following Theorem due to Jordan.

\begin{theorem}[Jordan] Let $G$ be a primitive permutation group of finite degree $n$, containing a cycle of prime length fixing at least three points. Then $G \geqslant A_n$. \end{theorem}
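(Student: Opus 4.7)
The plan is to prove Jordan's theorem via the classical theory of Jordan sets. A subset $\Delta \subseteq \Omega$ is a \emph{Jordan set} for $G$ if the pointwise stabiliser $G_{(\Omega \setminus \Delta)}$ acts transitively on $\Delta$. The support $\Delta_0 := \operatorname{supp}(\sigma)$ of the given $p$-cycle $\sigma$ is a Jordan set of size $p$ whose complement has size at least $3$, witnessed by the cyclic group $\langle \sigma \rangle$ acting regularly on $\Delta_0$ while fixing $\Omega \setminus \Delta_0$ pointwise.

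The core technical tool is the growth lemma: if $\Delta$ is a Jordan set with witness $H \leq G_{(\Omega \setminus \Delta)}$ and $g \in G$ satisfies $\emptyset \neq \Delta \cap \Delta^g \subsetneq \Delta$, then $\Delta \cup \Delta^g$ is again a Jordan set, certified by $\langle H, H^g \rangle$. First I would prove this lemma: the join of $H$ and $H^g$ fixes $\Omega \setminus (\Delta \cup \Delta^g)$ pointwise, and chaining the transitive actions of $H$ on $\Delta$ and $H^g$ on $\Delta^g$ through the nonempty overlap yields transitivity on $\Delta \cup \Delta^g$. Primitivity of $G$ then provides the means to iterate: for any Jordan set $\Delta$ with $1 < |\Delta| < n$, some $g \in G$ must produce a proper overlap, for otherwise the $G$-translates of $\Delta$ would partition $\Omega$ into a non-trivial system of blocks, contradicting primitivity. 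Starting from $\Delta_0$ and iterating produces a chain of Jordan sets of strictly increasing size.

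From this chain one extracts multiple transitivity of $G$ via the classical Jordan--Wielandt bootstrap: if $G$ is $m$-transitive and admits a Jordan set whose complement has size $m$, then $G$ is $(m+1)$-transitive, so by arranging the growth (or invoking sufficiently many conjugates) one obtains Jordan sets of every relevant complement size and boosts $G$ all the way up to being transitive on $p$-subsets of $\Omega$. Conjugating $\sigma$ then realises every $p$-cycle of $\operatorname{Sym}(\Omega)$ as an element of $G$. For $p$ odd the collection of all $p$-cycles generates $A_n$, while for $p = 2$ the transpositions combined with primitivity force $G = S_n$; either way $G \geq A_n$. The principal obstacle is managing the iteration so that Jordan sets of each intermediate complement size are actually encountered along the way (the growth lemma can in principle leap over a size), and ensuring the primitivity-versus-blocks dichotomy is invoked cleanly at each stage to keep the chain growing.
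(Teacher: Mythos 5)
The paper does not actually prove this statement: it is quoted as a classical theorem of Jordan and used as a black box, so there is no in-paper argument to compare yours against. Your outline follows the standard Jordan-set strategy, and its first half is sound: the support of the $p$-cycle is a Jordan set of prime size with complement of size at least three, the growth lemma for properly overlapping translates is correct, and primitivity does force a proper overlap, since otherwise $\Delta$ would be a non-trivial block. The difficulties start exactly where you flag them, but they are more than bookkeeping.

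First, the bootstrap as you state it needs Jordan sets whose complements realise \emph{every} size $1,2,\dots,n-p$, consumed in increasing order of $m$, whereas the growth lemma produces a chain whose complement sizes can skip values and can collapse directly to $\Delta\cup\Delta^g=\Omega$, which is useless. There is no way to ``arrange the growth'' to hit each size; the classical repair is different: one uses the overlap argument once to prove $G$ is $2$-transitive, then passes to point stabilisers $G_\alpha$ with $\alpha$ outside the Jordan set (which shrinks the complement by exactly one and preserves the hypotheses) and inducts -- this is Jordan's transitivity theorem, and it also uses that the pointwise stabiliser acts \emph{primitively} on the Jordan set, automatic here because $|\Delta_0|=p$ is prime. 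Second, and more fatally, the finale is wrong: conjugating $\sigma$ over $G$ gives a $p$-cycle on every $p$-subset, but at best the $p-1$ powers of one cycle per support rather than all $(p-1)!$ of them, so you cannot conclude that $G$ contains every $p$-cycle of $\mathrm{Sym}(\Omega)$. Indeed a multiply transitive group containing one $p$-cycle per support need not visibly contain $A_n$ -- compare $\mathrm{PGL}_2(p)$ in degree $p+1$ or $M_{24}$ in degree $24$, which is precisely why the hypothesis of at least three fixed points is essential. The classical endgame instead manufactures a $3$-cycle (for instance as $\sigma^{-1}\sigma^{\tau}$ for a conjugate $\tau$ whose support meets that of $\sigma$ in a single point, or by an induction using the three fixed points) and then invokes the separate Jordan result that a primitive group containing a $3$-cycle contains $A_n$, and one containing a transposition is $S_n$. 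Without such a closing step the argument does not reach $A_n$.
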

\begin{lemma} \label{a6} The alternating group $A_6$ and $A_6 \times A_6$ are mixable. \end{lemma}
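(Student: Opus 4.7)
My plan is to produce, for $A_6$, one even and one odd hyperbolic generating triple whose $\nu$-invariants are coprime, and then bootstrap to $A_6\times A_6$. Since every element of $A_6$ has order in $\{1,2,3,4,5\}$ and any odd hyperbolic triple must contain a $5$-cycle (as $(3,3,3)$ is non-hyperbolic), a clean coprime pair is an even triple of type $(4,4,3)$ with $\nu_1=48$ and an odd triple of type $(5,5,5)$ with $\nu_2=125$, giving $\gcd(\nu_1,\nu_2)=1$.

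For the even triple I would take $x_1=(1,2,3,4)(5,6)$ and $y_1=(1,4,2,3)(5,6)$, whose product is the $3$-cycle $(2,4,3)$ fixing three points. Jordan's theorem then reduces generation to a primitivity check, and I would verify by direct inspection that neither the unique $x_1$-invariant pair partition $\{\{1,3\},\{2,4\},\{5,6\}\}$ nor the two $x_1$-invariant triple partitions $\{\{1,3,5\},\{2,4,6\}\}$ and $\{\{1,3,6\},\{2,4,5\}\}$ is preserved by $y_1$. For the odd triple I would take $x_2=(1,2,3,4,5)$ and $y_2=(1,3,6,4,2)$, so that $x_2y_2=(1,4,3,6,5)$ by direct computation. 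Since Jordan does not apply (no $5$-cycle fixes three points), I would instead compute $x_2y_2^2=(1,6,3,5)(2,4)$, which has order $4$; as the only maximal subgroups of $A_6$ containing a $5$-cycle are the two classes of $A_5$, which have no element of order $4$, this forces $\langle x_2,y_2\rangle=A_6$.

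For $A_6\times A_6$ my plan is to use Lemma \ref{cop} on the even side and a diagonal construction on the odd side. The $(4,4,3)$ triple has the coprime pair $(4,3)$, so after a cyclic rotation and an application of Lemma \ref{cop} I obtain a generating triple of $A_6\times A_6$ of type $(12,12,4)$ with $\nu=576=2^6\cdot 3^2$. Since the $(5,5,5)$ triple has no coprime pair, for the odd side I instead form the componentwise triple $((x_2,x_2'),(y_2,y_2'),(z_2,z_2'))$ from two $\mathrm{Aut}(A_6)$-inequivalent $(5,5,5)$ triples of $A_6$; a Goursat-type argument shows this generates $A_6\times A_6$, since any proper subgroup of $A_6\times A_6$ surjecting onto both factors is the graph of some $\sigma\in\mathrm{Aut}(A_6)$, which would force the two constituent triples to be equivalent. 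This yields a $(5,5,5)$ triple with $\nu=125$, and $\gcd(576,125)=1$.

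The main obstacle will be producing the inequivalent second $(5,5,5)$ triple. I plan to distinguish $\mathrm{Aut}(A_6)$-orbits via the $A_6$-class pattern of the triple, exploiting the two conjugacy classes of $5$-cycles in $A_6$ (swapped by the outer automorphism group $V_4$). Direct computation shows that $(x_2,y_2,x_2y_2)$ above has class multiset $\{5a,5a,5b\}$ in some labelling; I would then exhibit a second generating triple whose class multiset is $\{5a,5a,5a\}$, which lies in a different $\mathrm{Aut}(A_6)$-orbit since the class multiset is an Aut-invariant up to the global swap $a\leftrightarrow b$. A character-theoretic count of $(5,5,5)$-triples of each pattern, after subtracting the contributions from each point-stabiliser $A_5$, confirms that generating triples of both patterns exist in abundance.
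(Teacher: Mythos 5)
Your even triple does not generate $A_6$: both $x_1=(1,2,3,4)(5,6)$ and $y_1=(1,4,2,3)(5,6)$ carry their $2$-cycle on the same pair $\{5,6\}$, so $\langle x_1,y_1\rangle$ stabilises $\{5,6\}$ setwise and is contained in $(S_{\{1,2,3,4\}}\times S_{\{5,6\}})\cap A_6\cong S_4$, one of the maximal subgroups of $A_6$. The group is not even transitive, so Jordan's theorem is inapplicable; your verification only rules out invariant partitions into blocks of equal size, which is the wrong test when transitivity itself fails. Everything downstream of this triple collapses: $\nu_1=48$, the coprimality claim, and the $(12,12,4)$ triple for $A_6\times A_6$ obtained from Lemma \ref{cop}. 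Any repair must choose the two order-$4$ elements so that their $2$-cycles do not coincide (generating $(4,4,3)$ triples do exist in $A_6$, but these are not among them); the paper instead takes the GAP-verified triple $x_1=(1,2)(3,4,5,6)$, $y_1=(1,5,6,4)(2,3)$ of type $(4,4,4)$, which still gives $\gcd(\nu_1,\nu_2)=\gcd(64,125)=1$ against a $(5,5,5)$ triple, and pairs it with a second even triple to reach $A_6\times A_6$.

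The rest of your plan is essentially sound and close to the paper's. Your odd triple $x_2=(1,2,3,4,5)$, $y_2=(1,3,6,4,2)$ does generate: the element $x_2y_2^2$ of order $4$ rules out the only maximal overgroups of an element of order $5$, namely the two classes of $A_5$. The diagonal construction for the odd side of $A_6\times A_6$ from two $\mathrm{Aut}(A_6)$-inequivalent $(5,5,5)$ triples, justified by a Goursat argument for the simple group $A_6$, is exactly the mechanism the paper uses (with $y_2'=x_2^{(1,2,3,4,6)}$), and your proposed invariant --- the $5A/5B$ class multiset up to the global swap induced by $S_6$ --- is a legitimate certificate of inequivalence, though you would still need to exhibit the second triple explicitly and confirm its pattern.
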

\begin{proof} For our even triples we take the following elements in the natural representation of $A_6$ $$x_1=(1,2)(3,4,5,6), \; y_1=(1,5,6,4)(2,3), \; \; x_1'=(1,2)(3,4,5,6), \; y_1'=(1,5,6).$$ It can easily be checked in GAP \cite{gap} that $(x_1, y_1, x_1y_1)$ is an even triple for $A_6$ of type $(4,4,4)$ and that $((x_1,x_1'), (y_1,y_1'), (x_1y_1,x_1'y_1'))$ is an even triple for $A_6 \times A_6$ of type $(4,12,12)$.

For our odd triples, let $x_2=(1,2,3,4,5)$, $y_2= x_2^{(1,3,6)}$ and $y_2'=x_2^{(1,2,3,4,6)}$. Then it can be checked that $(x_2,y_2,x_2y_2)$ is an odd triple of type $(5,5,5)$ for $A_6$ and $((x_2,x_2),(y_2,y_2'),(x_2y_2,x_2y_2'))$ is an odd triple for $A_6 \times A_6$ also of type $(5,5,5)$. Therefore we have a mixable Beauville structure of type $(4,4,4;5,5,5)$ for $A_6$ and type $(4,12,12;5,5,5)$ for $A_6 \times A_6$.\end{proof}
\begin{lemma} The alternating group $A_7$ and $A_7 \times A_7$ are mixable. \end{lemma}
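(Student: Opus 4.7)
The plan is to follow the template of Lemma \ref{a6} and exhibit an even hyperbolic generating triple together with an odd hyperbolic generating triple for $A_7$ whose $\nu$-values are coprime, and then to extend each of these to a hyperbolic generating triple for $A_7 \times A_7$.

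For the odd triple the natural first target is type $(5,5,5)$: this gives $\nu_2 = 125$, which is automatically coprime to any $\nu$-value built from elements of orders in $\{2,3,4,6,7\}$. Starting from $x_2 = (1,2,3,4,5)$ and taking $y_2 = x_2^{\sigma}$ for a conjugating permutation $\sigma$ whose support meets $\{6,7\}$ (for example $\sigma = (2,6)(3,7)$) produces a candidate pair whose product order and generation can be verified directly. For the even triple I would aim at a type whose $\nu$-value involves only the primes $2$ and $3$, for instance $(4,4,3)$ with $\nu_1 = 48$; elements $x_1, y_1 \in A_7$ of respective orders $4$ and $4$ whose product has order $3$ are easy to locate by hand or with a short GAP \cite{gap} search, and generation can be confirmed for instance by observing that the subgroup is transitive, primitive, and contains a $3$-cycle.

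The extension to $A_7 \times A_7$ will imitate the device used in Lemma \ref{a6}: on the odd side I would take $((x_2, x_2),(y_2, y_2'),(x_2y_2, x_2y_2'))$ with $y_2' = x_2^{\sigma'}$ for a further conjugating permutation $\sigma'$ chosen so that no automorphism of $A_7$ simultaneously fixes $x_2$ and sends $y_2$ to $y_2'$; Goursat's lemma then forces the generated subgroup to be the full direct product. The even triple is promoted in the same fashion, with the additional option offered by the Remark following Lemma \ref{cop} whenever a pair of orders in the $A_7$-triple happens to be coprime (for instance the $4$ and $3$ in type $(4,4,3)$), allowing one to invoke Lemma \ref{cop} directly on that pair.

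The one delicate point is the generation check for the odd triple: because a $5$-cycle in $S_7$ fixes only two points, Jordan's theorem does not directly rule out an intermediate proper primitive subgroup, and one must either inspect the transitive maximal subgroups of $A_7$ (none of them apart from $A_7$ itself contains two $5$-cycles generating a transitive subgroup of degree $7$) or rely on a short GAP computation. Once the four elements are in hand, coprimality of $\nu_1 = 48$ and $\nu_2 = 125$ is visible at a glance, and the conditions of Definition \ref{mixable} are then immediate.
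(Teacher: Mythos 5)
Your overall architecture (an even and an odd hyperbolic generating triple with coprime $\nu$-values, promoted to $A_7\times A_7$ either via inequivalent triples or via the coprimality device of Lemma \ref{cop}) matches the paper, and your odd side is essentially sound: $(5,5,5)$ generating triples of $A_7$ do exist, generation of a transitive subgroup by two $5$-cycles forces $A_7$ since no proper transitive subgroup of degree $7$ has order divisible by $5$, and the only slip there is that your sample $\sigma=(2,6)(3,7)$ actually gives $x_2y_2$ a $7$-cycle, so the type is $(5,5,7)$ rather than $(5,5,5)$ (harmless for coprimality). The genuine gap is in the even triple. You propose type $(4,4,3)$ and justify generation by ``transitive, primitive, and contains a $3$-cycle.'' An element of order $3$ in $A_7$ is either a $3$-cycle or of type $3^2$. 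If $x_1y_1$ is a $3$-cycle the configuration simply does not generate: the class-multiplication count gives exactly $162$ pairs $(x_1,y_1)$ of $(4,2)$-elements with product a fixed $3$-cycle, and an inclusion--exclusion over the intransitive overgroups (four point stabilisers contributing $4\times 27$, six $2$-set stabilisers contributing $6\times 12$, the five admissible $\{3,4\}$-partition stabilisers contributing $18+4\times 9$, minus $72$ for triple intersections) accounts for all $162$ of them, so every such pair lies in an intransitive subgroup. If instead $x_1y_1$ has type $3^2$, generating pairs of type $(4,4,3)$ do exist, but then your Jordan argument collapses: the subgroup need not visibly contain a $3$-cycle, and a transitive subgroup of degree $7$ containing elements of orders $4$ and $3$ of these cycle types could a priori be $L_2(7)$, which contains no $3$-cycles at all. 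So the generation step as written would fail, and you would need either an explicit computational check or an argument excluding $L_2(7)$.

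The fix is easy and is what the paper does: it simply exhibits explicit elements giving an even triple of type $(6,6,5)$ and an odd triple of type $(7,7,7)$ (with $\nu$-values $180$ and $343$ coprime), verifies generation in GAP, and uses a pair of inequivalent triples of the same type for $A_7\times A_7$. Your plan is salvageable by replacing the hand-waved generation argument for the even triple with a direct verification (or by switching to a type, such as the paper's $(6,6,5)$, for which the maximal overgroups are easier to exclude), but as stated the key existence/generation step for the $(4,4,3)$ triple is not justified and in the $3$-cycle case is actually false.
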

\begin{proof} For our even triples we take the following elements in the natural representation of $A_7$ $$x_1=(1,2)(3,4)(5,6,7), \; y_1=(1,2,3)(4,5)(6,7), \; \; x_1'=(1,6)(2,4,5)(3,7), \; y_1'=(1,6,2)(3,7,4).$$ It can easily be checked in GAP that $(x_1,y_1,x_1y_1)$ is an even triple for $A_7$ of type $(6,6,5)$ and that $((x_1,x_1'),(y_1,y_1'),(x_1y_1,x_1'y_1')$ is an even triple for $A_7 \times A_7$ of type $(6,6,5)$.

For our odd triples let $x_2=(1,2,3,4,5,6,7)$, $y_2=x_2^{(1,3,2)}$ and $y_2'=x_2^{(1,3,2)}$. Again, it can be checked that $(x_1,y_1,x_1y_1)$ is an odd triple of type $(7,7,7)$ for $A_7$ and that $((x_1,x_1'),(y_1,y_1'),(x_1y_1,x_1'y_1'))$ is an odd triple also of type $(7,7,7)$ for $A_7 \times A_7$. Therefore both $A_7$ and $A_7 \times A_7$ admit a mixable Beauville structure of type $(6,6,5;7,7,7)$.\end{proof}
\begin{lemma} \label{alt} The alternating group $A_{2m}$ and $A_{2m} \times A_{2m}$  are mixable for $m \geq 4$. \end{lemma}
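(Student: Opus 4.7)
The plan is to exhibit, for every $m \geq 4$, an explicit even hyperbolic generating triple and an explicit odd hyperbolic generating triple in $A_{2m}$ whose $\nu$-values are coprime, and then to lift both triples to $A_{2m}\times A_{2m}$ by applying Lemma \ref{cop} (possibly after replacing one entry by a well-chosen conjugate). The proof should follow the same blueprint used above for $A_6$ and $A_7$, but with two small families of explicit permutations that scale with $m$, and with Jordan's theorem doing the job of verifying generation.

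For the odd triple I would take $x_2 = (1,2,\ldots,2m-1)$, a $(2m-1)$-cycle lying in $A_{2m}$ because $2m-1$ is odd, together with $y_2 = x_2^{\sigma}$ for a short conjugator $\sigma$ that moves the fixed point $2m$ into the support; a $3$-cycle of the form $(i,j,2m)$ is the natural candidate. Both $x_2$ and $y_2$ have order $2m-1$, and by choosing $\sigma$ explicitly one can force $o(x_2 y_2)$ to be odd as well. Generation is handled by Jordan's theorem: the group $\langle x_2, y_2\rangle$ is transitive and primitive by construction, and for any odd prime $p \mid 2m-1$ a suitable power of $x_2$ is a union of $p$-cycles from which, after combining with a conjugate, one extracts a single $p$-cycle fixing at least three points, forcing $\langle x_2, y_2\rangle \supseteq A_{2m}$.

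For the even triple I would pick a pair whose three element orders all have prime support disjoint from the primes dividing $2m-1$. The cleanest template is to choose orders that are powers of $2$, together with at worst a small even factor like $3$ or $5$ contributed by a short cycle, ensuring coprimality with the odd triple. A natural candidate is a pair of the form $x_1 = (1,2)(3,4)$ and $y_1$ adjusted so that $x_1 y_1$ is a long even-order cycle of $2$-power length; the cycle lengths in $x_1, y_1$ and $x_1 y_1$ are then chosen according to the residue of $2m$ modulo a suitable modulus. Once again Jordan's theorem provides the generation certificate: we need primitivity plus a cycle of prime length fixing at least three points, both of which are easy to verify for the explicit permutations.

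The principal obstacle is that any single concrete construction will not work uniformly for all $m \geq 4$, because the $2$-adic and $3$-adic valuations of $2m$, and the prime content of $2m-1$, both vary. I would therefore anticipate a short case split (on the parity of $m$, or on $m \bmod 4$), with each case exhibiting slightly different explicit cycle structures that respect the parity of $A_{2m}$ and keep the prime supports of the even and odd triples disjoint. The coprimality of $\nu(x_1,y_1)$ and $\nu(x_2,y_2)$ is then automatic from the disjointness of prime supports. Finally, the lift to $A_{2m}\times A_{2m}$ follows Lemma \ref{cop}: within the odd triple the product $x_2 y_2$ can be arranged to have order coprime to $2m-1$, and within the even triple two of the three entries will automatically have coprime orders under the construction above, so the lemma furnishes hyperbolic generating triples for the product group of the same types, preserving the coprimality of the $\nu$-values and hence yielding a mixable Beauville structure for $A_{2m}\times A_{2m}$.
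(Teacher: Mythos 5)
Your odd triple is essentially the paper's: a $(2m-1)$-cycle $x_2=(1,2,\dots,2m-1)$ together with a conjugate by a $3$-cycle through the point $2m$, with Jordan's theorem certifying generation. But the rest of the proposal has genuine gaps. First, the even triple is never actually produced: you acknowledge that "any single concrete construction will not work uniformly" and defer to an unspecified case split on $m\bmod 4$, which is precisely the content that the lemma requires. The paper avoids any case split with the uniform choice $a_1=(1,2)(3,\dots,2m)$, $b_1=a_1^{(1,3,4)}$, giving a triple of type $(2m-2,2m-2,2m-2)$; coprimality with the odd triple of type $(2m-1,2m-1,2m-1)$ is then automatic since $\gcd(2m-2,2m-1)=1$, with no need to control prime supports or $2$-adic valuations.

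Second, and more seriously, your lift to $A_{2m}\times A_{2m}$ via Lemma \ref{cop} cannot work for the even triple as you have designed it. A mixable structure forces $o(x_1)$ and $o(y_1)$ both even, and your template additionally makes $o(x_1y_1)$ even (a "cycle of $2$-power length" --- which, incidentally, is an odd permutation if taken literally as a single cycle, so it cannot lie in $A_{2m}$). Then no two of the three orders are coprime, so neither Lemma \ref{cop} nor its generalisation in the following remark applies, and the claim that "two of the three entries will automatically have coprime orders" is false for your own construction. The paper confronts exactly this obstruction --- its even triple has all three orders equal to $2m-2$ --- and resolves it by a different mechanism: it constructs a \emph{second} triple of the same type and uses Lemma \ref{equi} to show the two triples are inequivalent under $\mathrm{Aut}(A_{2m})=S_{2m}$, whence the diagonal pair generates $A_{2m}\times A_{2m}$. (It does the same for the odd triples.) That inequivalence argument, or some substitute for it, is the missing idea; without it the passage to the direct square does not go through. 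Your fallback for the odd triple, arranging $o(x_2y_2)$ coprime to $2m-1$ so that Lemma \ref{cop} applies, is plausible (e.g.\ a $(2m-3)$-cycle product would do) but is likewise asserted rather than exhibited.
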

\begin{proof} For $m \geq 4$ let $G=A_{2m}$ under its natural representation and consider the elements \begin{align*}
	a_1 &=(1,2)(3,\dots,2m),\\
	b_1 &= a_1^{(1,3,4)}=(1,5,6,\dots,2m,4)(2,3),\\
	a_1b_1 &=(1,3)(2,5,7,\dots,2m-3,2m-1,4,6,8,\dots,2m). \end{align*}
The subgroup $H_1 = \langle a_1,b_1 \rangle$ is clearly transitive and the elements $$a_1^2 = (3,5,\dots,2m-1)(4,6,\dots,2m), \; \; b_1^2 = (1,6,8,\dots,2m)(5,7,\dots,2m-1,4),$$fix the point 2 and act transitively on the remaining points. Finally, $a_1^2 b_1^{-2}=(1,2m,2m-1,3,4)$ is a prime cycle fixing at least 3 points for all $m$ and so by Jordan's Theorem $H_1 = G$. This gives us our first hyperbolic generating triple of type $(2m-2,2m-2,2m-2)$ for $G$. For our second, we show that there is a similar triple which is inequivalent to the first under the action of Aut$(G) = S_{2m}$. Consider the elements \begin{align*}
	a_1' &=(1,2)(3,\dots,2m),\\
	b_1' &= a_1'^{(1,4,3)}=(1,3,5,6,\dots,2m)(2,4),\\
	a_1'b_1' &=(1,4,6,\dots,2m,5,7,\dots,2m-1)(2,3) \end{align*}
and note that $a_1'=a_1$. For the same argument as before we have that $\langle a_1',b_1' \rangle = G$. Now suppose that $(a_1,b_1,a_1b_1)$ is equivalent to $(a_1',b_1',a_1'b_1')$ for some $g \in$ Aut$(G)$. If $a_1^g = a_1'$ then by Lemma \ref{equi} we have that $b_1^g=b_1'$ and $(a_1b_1)^g=a_1'b_1'$. Then $(1,2)^g=(1,2)$ and $(2,3)^g=(2,4)$ but these are incompatible with $(1,3)^g=(2,3)$ since for the former to hold $3$ must map to $4$ which is incompatible with the latter. Now suppose $a_1^g=b_1'$ implying $b_1^g=a_1'b_1'$ and $a_1b_1=a_1$. Then similarly we have $(1,2)^g=(2,4)$ and $(2,3)^g=(2,3)$ forcing $g$ to map $1$ to $4$ which is incompatible with requiring that $(1,3)^g=(1,2)$. Finally, if $a_1^g=a_1'b_1'$, forcing $b_1^g=a_1'$ and $a_1b_1=b_1'$, we get that $(1,2)^g=(2,3)$ and $(2,3)^g=(12)$ and we find this is incompatible with $(1,3)^g=(2,4)$. Hence these two hyperbolic generating triples are inequivalent under the action of the automorphism group of $G$ and so $((a_1,a_1'),(b_1,b_1'),(a_1b_1,a_1'b_1'))$ is a hyperbolic generating triple for $G \times G$ of type $(2m-2,2m-2,2m-2)$.

For our first odd triple consider the elements \begin{align*}
	a_2 &= (1,2,\dots,2m-1),\\
	b_2 &= a_2^{(1,2m,3)} = (1,4,5,\dots,2m-1,2m,2), \\
	a_2b_2 &=(2,3,5,7,\dots,2m-1,4,6,8,\dots,2m-2,2m) \end{align*}
and let $H_2 = \langle a_2,b_2 \rangle$. We clearly have transitivity and 2-transitivity, hence $H_2$ is primitive. Since $a_2b_2^{-1}=(1,2m,2m-1,2,3)$ is a prime cycle fixing at least 3 points for all $m$, again we can apply Jordan's Theorem and we have that $H_2 = G$. Then $(a_1,b_1,a_2,b_2)$ is a mixable Beauville structure on $G$ of type $$(2m-2,2m-2,2m-2;2m-1,2m-1,2m-1).$$ For our second odd triple consider the elements \begin{align*}
	a_2' &= (1,2,\dots,2m-1),\\
	b_2' &= a_2'^{(1,3,2m)} = (2,2m,4,\dots,2m-1,3), \\
	a_2'b_2' &=(1,2m,4,6,\dots,2m-2,3,5,\dots,2m-1)\end{align*}
and note that $a_2'=a_2$. It follows that $\langle a_2,b_2 \rangle = G$ from a similar argument as before and so it remains to show that $\langle (a_2,a_2'),(b_2,b_2')\rangle = G \times G$. Let $g \in$ Aut$(G)$ and suppose that $a_2^g=a_2'$. Then by Lemma \ref{equi} and inspection of the fixed points of these triples we have $g$ fixes $2m$ and maps $3 \to 1$ and $1 \to 2$ which is incompatible with $a_2^g=a_2'$. Similarly, if $a_2^g=b_2'$ then $2m \to 1$, $3 \to 2$ and $1 \to 2m$ but from $b_2^g=a_2'b_2'$ $g$ must then map $3 \to 5$, a contradiction. Finally, if $a_2^g=a_2'b_2'$ we get the mappings $2m \to 2$, $3 \to 2m$ and $1 \to 1$. But since $a_2^g=a_2'b_2'$, $g$ must then also map $3 \to 4$, a final contradiction. Then $(a_2,b_2,a_2b_2)$ and $(a_2',b_2',a_2'b_2')$ are inequivalent hyperbolic generating triples both of type $(2m-1,2m-1,2m-1)$ on $G$ and so we get a mixable Beauville structure on $G \times G$ of type $$(2m-2,2m-2,2m-2;2m-1,2m-1,2m-1).$$
\end{proof}
\begin{lemma} The alternating group $A_{2m+1}$ and $A_{2m+1} \times A_{2m+1}$ are mixable for $m \geq 4$. \end{lemma}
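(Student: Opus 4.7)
The plan is to mirror the structure of the preceding lemma (for $A_{2m}$) in the odd-degree case. Set $G=A_{2m+1}$. For the even triple, the naive analogue $(1,2)(3,\ldots,2m+1)$ is an odd permutation, so instead I would take
$$a_1=(1,2)(3,4)(5,6,\ldots,2m+1),$$
a product of two disjoint transpositions and a $(2m-3)$-cycle. Since $2m-3$ is odd, each factor is an even permutation, so $a_1\in A_{2m+1}$, and $a_1$ has even order $2(2m-3)$. Let $b_1=a_1^\sigma$ for a short auxiliary permutation $\sigma$ chosen so that $\langle a_1,b_1\rangle$ is transitive, indeed primitive, and contains a short prime-length cycle fixing at least three points; Jordan's theorem then gives $\langle a_1,b_1\rangle=G$. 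For the odd triple I take $a_2=(1,2,\ldots,2m+1)$ of order $2m+1$ together with $b_2=a_2^\tau$ for a short permutation $\tau$, again applying Jordan.

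Since $2m+1$ is odd, $\gcd(2(2m-3),\,2m+1)=\gcd(2m-3,\,2m+1)=\gcd(2m-3,\,4)=1$, so provided $\sigma$ is chosen so that $o(a_1b_1)$ is coprime to $2m+1$ (for instance a divisor of a power of $2(2m-3)$), and $\tau$ is chosen so that $o(a_2b_2)$ is odd, we obtain $\gcd(\nu(a_1,b_1),\nu(a_2,b_2))=1$, and $(a_1,b_1,a_2,b_2)$ is a mixable Beauville structure on $G$.

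To lift to $G\times G$, construct second triples $(a_1',b_1',a_1'b_1')$ and $(a_2',b_2',a_2'b_2')$ of the same respective types but inequivalent under Aut$(G)=S_{2m+1}$, by taking $a_i'=a_i$ and replacing each conjugating permutation with a different short one. As in Lemma~\ref{alt}, Lemma~\ref{equi} reduces inequivalence to three cases, each of which can be ruled out by tracking where a few distinguished points (supports or common fixed points of the generators) must map; combining inequivalent triples then supplies the required generating triples of $G\times G$ of the same type on each factor, yielding a mixable Beauville structure on $G\times G$.

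The main obstacle is choosing the auxiliary permutations $\sigma$ and $\tau$ to simultaneously deliver primitivity, supply a short prime-length cycle for Jordan's theorem, keep the product orders suitably constrained (odd for the second triple, and coprime to $2m+1$ throughout), and ultimately produce inequivalent second triples under the $S_{2m+1}$-action. Once explicit choices are made (analogous to the $(1,3,4)$ and $(1,4,3)$, respectively $(1,2m,3)$ and $(1,3,2m)$, conjugators appearing in Lemma~\ref{alt}), the verifications are routine.
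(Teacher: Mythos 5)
Your outline reproduces the paper's strategy---the same starting element $a_1=(1,2)(3,4)(5,\dots,2m+1)$, Jordan's theorem for generation, Lemma \ref{equi} plus point-tracking for inequivalence, and gcd arithmetic for coprimality---but it stops exactly where the proof has to start. The entire content of the lemma lies in exhibiting explicit conjugators and verifying that the resulting products have controlled cycle structure; you defer all of this to ``short auxiliary permutations'' $\sigma$, $\tau$ whose simultaneous satisfaction of every requirement (primitivity of $\langle a_1,a_1^\sigma\rangle$, a prime-length cycle fixing at least three points, a product order with prime factors dividing $2(2m-3)$, and inequivalence of the two copies under $S_{2m+1}$) is precisely the thing that needs proving, and nothing in the proposal guarantees such $\sigma$ and $\tau$ exist. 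For comparison, the paper takes $b_1=(1,\dots,2m-3)(2m-2,2m-1)(2m,2m+1)$, for which $a_1b_1$ is a single $(2m-3)$-cycle, and obtains the second even triple for free by cyclically rotating the first (setting $b_1'=a_1$ and $a_1'b_1'=b_1^{-1}$), with inequivalence following from a short commutator argument using that $G$ is nonabelian rather than from point-tracking; for the second odd triple it uses a $(2m-1)$-cycle, so the two odd triples have different types and are inequivalent by Lemma \ref{inequi} with no further work.

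There is also a genuine error in your coprimality condition: requiring $o(a_2b_2)$ to be odd does not yield $\gcd(\nu(a_1,b_1),\nu(a_2,b_2))=1$, because $\nu(a_1,b_1)$ is divisible by the odd number $2m-3$; you need $o(a_2b_2)$ coprime to $2m-3$ and to $o(a_1b_1)$ as well. The paper sidesteps this by arranging $o(a_2b_2)=2m+1$, and $\gcd(2m-3,2m+1)=\gcd(2m-3,4)=1$ as you correctly note. Without pinning down $o(a_2b_2)$ explicitly, the coprimality claim does not follow.
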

\begin{proof} For $m \geq 4$ let $G=A_{2m+1}$ under its natural representation and consider the elements \begin{align*}
	a_1 &= (1,2)(3,4)(5,\dots,2m+1),\\
	b_1 &=(1,2,\dots,2m-3)(2m-2,2m-1)(2m,2m+1),\\
	a_1b_1 &= (1,3,5,\dots,2m-1,2m+1,6,8,\dots,2m-4).\end{align*}
The subgroup $G_1 = \langle a_1,b_1 \rangle$ is clearly transitive and the elements \begin{align*}
	b_1a_1^2b_1^{-1} &= (4,6,8,\dots,2m,5,7,\dots,2m-5,2m-1,2m+1),\\
	a_1b_1^2a_1^{-1} &= (2,4,2m+1,6,8,\dots,2m-4,1,3,\dots,2m-5)\end{align*}
both fix the point $2m-3$ and act transitively on the remaining points; hence $G_1$ acts primitively. Finally, the element $a_1b_1^{-1} = (2,2m-3,2m-1,2m+1,4)$ is a prime cycle fixing at least 3 points for all $m \geq 4$ and so by Jordan's Theorem $G_1 =G$. This gives our first hyperbolic generating triple of type $(2(2m-3),2(2m-3),2m-3)$ for $G$. For our second even triple, we manipulate the first in the following way. Let \begin{align*}
	a_1' &=(1,2m-4,\dots,6,2m+1,2m-1,\dots,3),\\
	b_1' &=(1,2)(3,4)(5,\dots,2m+1),\\
	a_1'b_1' &=(1,2m-3,\dots,2)(2m-2,2m-1)(2m,2m+1).\end{align*}
Since $b_1'=a_1$ and $a_1'b_1'=b_1^{-1}$ it is clear that $\langle a_1',b_1' \rangle=G$. Note also that $a_1'=b_1^{-1}a_1^{-1}$. To see that $(a_1,b_1,a_1b_1)$ and $(a_1',b_1',a_1'b_1')$ are inequivalent, suppose for a contradiction there exists $g \in$ Aut$(G)$ for which these triples are equivalent. Since conjugation preserves cycle type it must be that $(a_1b_1)^g=a_1'$ which, by Lemma \ref{equi}, implies that $a_1^g=b_1'$ and $b_1^g=a_1'b_1'$. This give the equality $$a_1b_1^{-1}=b_1'(a_1'b_1')=a_1^gb_1^g=(a_1b_1)^g=a_1'=b_1^{-1}a_1,$$ a contradiction since otherwise $G$ would be abelian. Then, $((a_1,a_1'),(b_1,b_1'))$ is an even triple for $G \times G$ of type $(2(2m-3),2(2m-3),2(2m-3))$.

For our first odd triple consider the elements \begin{align*}
	a_2 &= (1,2, \dots, 2m+1),\\
	b_2 &= a_2^{(1,2,3)} = (1,4,5,\dots,2m,2m+1,2,3),\\
	a_2b_2 &= (1,3,5,\dots,2m-1,2m+1,4,6,\dots,2m-2,2m,2).\end{align*}
The subgroup $G_2 = \langle a_2,b_2 \rangle$ is clearly transitive while the elements $b_2^{-1}a_2^2=(1,5,6,\dots,2m+1)(3,4)$ and $a_2b_2^{-1}=(1,2m+1,3)$ fix the point 2 and act transitively on the remaining points. Hence $G_2$ is primitive, with a prime cycle fixing at least 3 points, then by Jordan's Theorem $G_2=A_{2m+1}$. This gives us an odd triple of type $(2m+1,2m+1,2m+1)$ for $G$ and so since we have gcd$(2(2m-3),2m+1)=1$ it follows that $(a_1,b_1,a_2,b_2)$ is a mixable Beauville structure for $A_{2m+1}$ of type $$(2(2m-3),2(2m-3),2m-3;2m+1,2m+1,2m+1).$$ For our second odd triple consider the cycles \begin{align*}
	x_2 &= (1,2, \dots, 2m-1),\\
	y_2 &= x_2^{(1,2m,2,2m+1,3)} = (1,4,5,\dots,2m-1,2m,2m+1),\\
	x_2y_2 &= (1,2,3,5,\dots,2m-1,4,6,\dots,2m,2m+1)\end{align*}
and let $H_2 = \langle x_2,y_2 \rangle$. We clearly have transitivity while the elements $[x_2,y_2] = (1,2m,4,5,2)$ and $x_2[x_2,y_2] = (2,3,5,6,\dots,2m-1,2m,4)$ show that $H_2$ acts transitively on the stabiliser of the point $2m+1$ and contains a prime cycle fixing at least 3 points for all $m$. Then by Jordan's Theorem $H_2 = G$ and this gives us a second odd triple of type $(2m-1,2m-1,2m-1)$. Since it is clear that $2(2m-3)$ is coprime to both $2m-1$ and $2m+1$ we then have a mixable Beauville structure on $G \times G$ of type $$(2(2m-3),2(2m-3),2(2m-3);4m^2-1,4m^2-1,4m^2-1).$$\end{proof}

\section{The groups of Lie type}

We make use of theorems due to Zsigmondy, generalising a theorem of Bang, and Gow which we include here for reference. Throughout this section $q=p^e$ will denote a prime power for a natural number $e \geq 1$.

\begin{theorem}[Zsigmondy \cite{zs} or Bang \cite{bang}, as appropriate] \label{zsig} For any positive integer $a > 1$ and $n > 1$ there is a prime number that divides $a^n-1$ and does not divide $a^k-1$ for any positive integer $k < n$, with the following exceptions: \begin{enumerate}
\item $a = 2$ and $n=6$; and
\item $a+1$ is a power of $2$, and $n = 2$. \end{enumerate} \end{theorem}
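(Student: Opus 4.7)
The plan is to argue via cyclotomic polynomials. Let $\Phi_n(x) \in \ZZ[x]$ denote the $n$-th cyclotomic polynomial, so that $x^n - 1 = \prod_{d \mid n} \Phi_d(x)$. A prime $p$ is a primitive prime divisor of $a^n - 1$ exactly when the multiplicative order of $a$ modulo $p$ equals $n$, and this is equivalent to $p \mid \Phi_n(a)$ together with $p \nmid n$. Thus the theorem reduces to exhibiting, outside the two exceptional cases, a prime factor of $\Phi_n(a)$ coprime to $n$.

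The core step is the lemma: if a prime $p$ divides $\Phi_n(a)$ but is not a primitive prime divisor of $a^n - 1$, then $p \mid n$, and moreover $v_p(\Phi_n(a)) = 1$ (with a small caveat when $p = 2$). I would prove the first half by letting $d < n$ be the order of $a$ modulo $p$ and invoking the identity $\Phi_{ep^k}(x) = \Phi_e(x)^{\varphi(p^k)}$ in $\FF_p[x]$ (valid for $p \nmid e$ and $k \geq 1$), which forces $n = d p^k$ for some $k \geq 1$; the valuation bound then drops out of lifting-the-exponent applied to $a^n - 1 = (a^d)^{p^k} - 1$. A consequence is that if $a^n - 1$ has no primitive prime divisor then $\Phi_n(a)$ is essentially squarefree and all of its prime factors divide $n$, giving an upper bound $\Phi_n(a) \leq c \cdot P(n)$, where $P(n)$ denotes the largest prime factor of $n$ and $c$ is a small absolute constant absorbing the nuisance at $p = 2$.

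To close the argument I contrast this with the factorisation $\Phi_n(a) = \prod_\zeta (a - \zeta)$ over the primitive $n$-th roots of unity in $\CC$. Pairing $\zeta$ with $\bar\zeta$ gives $|a - \zeta|\,|a - \bar\zeta| = a^2 - 2a \operatorname{Re}(\zeta) + 1 \geq (a-1)^2$, with strict inequality whenever $\zeta \neq 1$, so $\Phi_n(a) > (a-1)^{\varphi(n)}$ for $n > 1$. For $\varphi(n)$ large enough (and $a \geq 2$) this exponential lower bound swamps the linear-in-$n$ upper bound $c P(n)$, and we conclude that a primitive prime divisor must exist.

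The principal obstacle is the boundary where the two bounds become comparable, namely $\varphi(n) \leq 2$ (so $n \in \{2, 3, 4, 6\}$) together with the smallest values of $a$. I would finish by direct inspection of these finitely many pairs. For $n = 2$, $\Phi_2(a) = a + 1$ fails to produce a new prime precisely when every prime divisor of $a + 1$ already divides $a - 1$, i.e.\ exactly when $a + 1$ is a power of $2$. For $(a,n) = (2, 6)$, one checks $\Phi_6(2) = 2^2 - 2 + 1 = 3$, and $3 \mid 2^2 - 1$, yielding the isolated exception. All other small pairs are dispatched by finite computation, producing no further exceptions and completing the theorem.
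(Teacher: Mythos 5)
The paper gives no proof of this statement: it is quoted as a classical result, with the proofs attributed to Bang (the case $a=2$), Zsigmondy (the general case) and L\"unburg (a modern account). So there is no internal argument to compare yours against, and I can only judge your sketch on its own terms. Its skeleton is the standard cyclotomic-polynomial proof, and the arithmetic half is essentially right: the equivalence between primitive prime divisors and primes $p\mid\Phi_n(a)$ with $p\nmid n$, the lemma that a non-primitive prime divisor of $\Phi_n(a)$ divides $n$, and the multiplicity-one statement via lifting the exponent. One small omission there: to get the upper bound $\Phi_n(a)\le c\,P(n)$ you need not just that every prime factor of $\Phi_n(a)$ divides $n$, but that only the \emph{largest} prime factor of $n$ can occur (this follows from $n=dp^k$ with $d=\operatorname{ord}_p(a)\mid p-1$, which forces every other prime factor of $n$ to be smaller than $p$); as written, ``all prime factors divide $n$'' only yields $c\cdot\mathrm{rad}(n)$.

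The genuine gap is in the size comparison. Your lower bound $\Phi_n(a)>(a-1)^{\varphi(n)}$ is vacuous when $a=2$: it says only $\Phi_n(2)>1$, which never ``swamps'' $c\,P(n)$, so the entire case $a=2$ --- Bang's theorem, explicitly part of the statement and the home of the exception $(2,6)$ --- is not covered. It also cannot be folded into ``finitely many boundary pairs'': under your bound \emph{every} $n$ is at the boundary when $a=2$, since the primitive $n$-th roots of unity near $1$ contribute factors of $|2-\zeta|$ arbitrarily close to $1$. Repairing this requires a genuinely sharper lower estimate for $\Phi_n(2)$ (for instance via $\Phi_n(2)=(2^n-1)\prod_{d\mid n,\,d<n}\Phi_d(2)^{-1}$ and induction, or a more careful product estimate isolating the few roots near $1$), and it is exactly this extra work that isolates $(2,6)$ as the unique sporadic exception. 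A secondary quibble: even for $a\ge3$ the endgame is not ``$\varphi(n)\le2$ plus small $a$'' --- what one actually uses is that the offending prime $p=P(n)$ satisfies $\varphi(n)\ge p-1$, so $(a-1)^{\varphi(n)}\ge2^{p-1}\ge p$ with slack failing only at $p=2$, i.e.\ $n$ a power of $2$, where a direct look at $\Phi_{2^k}(a)=a^{2^{k-1}}+1$ finishes; and the $n=2$ exception concerns infinitely many $a$, not finitely many pairs. Those points are repairable within your framework; the $a=2$ case is not, and needs a new idea.
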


We denote a prime with such a property $\Phi_n(a)$.

\begin{remark} The case where $a=2$, $n > 1$ and not equal to $6$ was proven by Bang in \cite{bang}. The general case was proven by Zsigmondy in \cite{zs}. Hereafter we shall refer to this as Zsigmondy's Theorem.  A more recent account of a proof is given by L\"{u}nburg in \cite{Lub}. \end{remark}

\begin{definition} Let $G$ be a group of Lie type defined over a field of characteristic $p>0$, prime. A \textbf{semisimple} element is one whose order is coprime to $p$. A semisimple element is \textbf{regular} if $p$ does not divide the order of its centraliser in $G$. \end{definition}

\begin{theorem}[Gow \cite{gow}] \label{gow} Let $G$ be a finite simple group of Lie type of characteristic $p$, and let $g$ be a non-identity semisimple element in $G$. Let $L_1$ and $L_2$ be any conjugacy classes of $G$ consisting of regular semisimple elements. Then $g$ is expressible as a product $xy$, where $x \in L_1$ and $y \in L_2$. \end{theorem}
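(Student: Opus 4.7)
The plan is to invoke the Frobenius class-multiplication formula. If $L_1, L_2$ are conjugacy classes of $G$ with fixed representatives $x_1, x_2$, then the number of pairs $(x,y) \in L_1 \times L_2$ satisfying $xy = g$ equals
\[
N(L_1, L_2; g) = \frac{|L_1||L_2|}{|G|} \sum_{\chi \in \mathrm{Irr}(G)} \frac{\chi(x_1)\chi(x_2)\overline{\chi(g)}}{\chi(1)},
\]
so it suffices to prove this quantity is strictly positive. The trivial character contributes $|L_1||L_2|/|G| > 0$, and the task reduces to controlling the contribution of the non-trivial irreducible characters.

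The first step is to exploit regularity to eliminate most characters from the sum. Here the hypothesis that $x_1, x_2$ are regular semisimple is essential: in the Deligne-Lusztig parametrisation of $\mathrm{Irr}(G)$, for a regular semisimple element $s$ the value $\chi(s)$ vanishes unless $\chi$ belongs to the rational Lusztig series $\mathcal{E}(G, (s^*))$ associated, via duality between $G$ and the dual group $G^*$, to the semisimple class of $s$. Moreover, within these series the non-vanishing values admit an explicit closed formula of the shape $\chi(s) = \varepsilon_G \varepsilon_T \langle \chi, R_T^\theta \rangle \theta(s)$, where $T = C_G(s)$ is a maximal torus by regularity and $R_T^\theta$ is the corresponding Deligne-Lusztig virtual character.

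The second step is to bound what remains. Regularity pins down $C_G(x_i)$ as a maximal torus $T_i$, and both the surviving values $|\chi(x_i)|$ and the degrees $\chi(1)$ are controlled by $|T_i|$ together with orders of relative Weyl groups. Combining the vanishing statement with column-orthogonality and the Cauchy-Schwarz estimate $|\chi(g)| \leq |C_G(g)|^{1/2}$ should yield an inequality of the form
\[
\left|\sum_{\chi \neq 1} \frac{\chi(x_1)\chi(x_2)\overline{\chi(g)}}{\chi(1)}\right| < 1,
\]
which together with the trivial-character contribution forces $N(L_1, L_2; g) \geq 1$.

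The main obstacle is exactly this quantitative bound: one must turn the qualitative vanishing pattern of character values on regular semisimple elements into an estimate uniform across all families of finite simple groups of Lie type. This is where the full strength of Deligne-Lusztig theory is invoked, together with Lusztig's Jordan decomposition of characters to transfer the estimate into the (much smaller) dual centraliser. The twisted groups and small-rank or exceptional cases will almost certainly require supplementary case analysis, and packaging the various torus contributions uniformly is the delicate part of the argument.
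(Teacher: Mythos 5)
This statement is quoted verbatim from Gow's paper \cite{gow}; the present paper offers no proof of it, so there is nothing internal to compare your argument against. Judged against Gow's published argument, your skeleton is the right one: the Frobenius class-multiplication formula, the fact that regularity forces $C_G(x_i)$ to be a maximal torus, and the special behaviour of irreducible character values at regular semisimple elements via Deligne--Lusztig theory are exactly the ingredients Gow uses.

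The genuine gap is the final quantitative step. The inequality $\bigl|\sum_{\chi\neq 1}\chi(x_1)\chi(x_2)\overline{\chi(g)}/\chi(1)\bigr|<1$ cannot be reached by the tools you name, and is false in general. Column orthogonality gives $\sum_\chi|\chi(x_i)|^2=|T_i|$, so Cauchy--Schwarz bounds the sum by $\max_{\chi\neq 1}\bigl(|\chi(g)|/\chi(1)\bigr)\sqrt{|T_1||T_2|}$; already for $PSL_2(q)$ the characters of degree $q\pm1$ give $|\chi(g)|/\chi(1)\approx 2/q$ while $\sqrt{|T_1||T_2|}\approx q$, so the bound is of order a constant, not less than $1$ --- and the actual sum over the discrete-series characters is a Ramanujan-type exponential sum whose value genuinely has absolute value comparable to $1$ or larger for suitable classes. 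A symptom of the problem is that your argument never uses the hypothesis that $g$ itself is semisimple, which is essential. Gow does not estimate the character sum at all: he uses the fact that the characteristic function of a regular semisimple class is a uniform function (a linear combination of the Deligne--Lusztig virtual characters $R_T^\theta$ alone), substitutes this into the Frobenius formula, and evaluates the resulting expression exactly via the orthogonality relations for the $R_T^\theta$ and the character formula at the semisimple element $g$; positivity then reduces to the positivity of Green functions evaluated at the identity, with no asymptotic inequality and no case analysis over the families of groups. If you want to salvage your approach you must replace the proposed $<1$ bound by this exact evaluation (or by substantially sharper character bounds than Cauchy--Schwarz can provide).
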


\begin{remark} A slight generalisation of this result to quasisimple  groups appears in \cite[Theorem 2.6]{fmp}. \end{remark}

\subsection{Projective Special Linear groups $PSL_2(q) \cong A_1(q)$}
The projective special linear groups $PSL_2(q)$ are defined over fields of order $q$ and have order $q(q+1)(q-1)/k$ where $k=$gcd$(2,q+1)$. Their maximal subgroups are listed in \cite{fj}.
\begin{lemma} \label{l27} Let $G=PSL_2(7)$. Then $G^n$ admits a mixable Beauville structure if and only if $n=1,2$. \end{lemma}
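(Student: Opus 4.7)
The plan splits into three cases. For $n = 1, 2$ I would construct mixable Beauville structures explicitly; for $n \geq 3$ I would derive a contradiction from an obstruction forcing the even half of any such structure to have $(4, 4, 4)$-type projections, a type with only two $\mathrm{Aut}(PSL_2(7))$-orbits. The case $n = 1$ is immediate: take a $(4, 4, 3)$-generating triple of $PSL_2(7)$ (an even pair, $\nu = 48$) and a $(7, 7, 7)$-generating triple (an odd pair, $\nu = 343$); both exist by a standard structure constant check, and $\gcd(48, 343) = 1$.

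For $n = 2$ I would invoke Hall's theorem. The key orbit counts in $G = PSL_2(7)$ are: (a) $N(4A, 4A, 4A) = 672$, and since the corresponding structure constants in $S_4$, $D_4$ and $C_4$ all vanish, every $(4, 4, 4)$-triple is generating; since generating pairs have trivial $\mathrm{Aut}(G)$-stabiliser, this gives $672/|\mathrm{Aut}(G)| = 672/336 = 2$ $\mathrm{Aut}$-orbits; (b) the $(3, 7, 7)$ type yields four $G$-orbits, indexed by the pair of classes in $\{7A, 7B\}^2$ chosen for the two order-$7$ elements, which are fused in pairs by the outer automorphism (swapping $7A \leftrightarrow 7B$) into two $\mathrm{Aut}$-orbits. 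Placing two inequivalent triples of each kind into the two factors of $G \times G$ gives an even pair of type $(4, 4, 4)$ and an odd pair of type $(3, 7, 7)$, with coprime $\nu$-values $64$ and $147$.

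For $n \geq 3$ I would argue by contradiction. Suppose $(X_1, Y_1, X_2, Y_2)$ is a mixable Beauville structure on $G^n$. Oddness of $\nu(X_2, Y_2)$ forces every projection of the odd pair to have all three orders in $\{3, 7\}$. Since the Euclidean $(3, 3, 3)$-type does not generate $PSL_2(7)$ --- a direct count shows all $N(3A, 3A, 3A) = 1064$ such triples lie in an $F_{21}$ or an $A_4$ subgroup --- every odd generating triple involves a $7$, so $7 \mid \nu(X_2, Y_2)$. If $3 \nmid \nu(X_2, Y_2)$, every projection of the odd pair would be of $(7, 7, 7)$-type; but this type has only one $\mathrm{Aut}$-orbit, forcing $n \leq 1$ by Hall. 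Hence $21 \mid \nu(X_2, Y_2)$, so by coprimality $\nu(X_1, Y_1)$ has no factor of $3$ or $7$ and every projection-order of $X_1, Y_1, X_1 Y_1$ lies in $\{2, 4\}$. A short enumeration then shows $(4, 4, 4)$ is the only generating type of $PSL_2(7)$ with all three orders in $\{2, 4\}$: types involving two involutions generate dihedral subgroups, while the $42$ ordered triples of each of $(2, 4, 4), (4, 2, 4), (4, 4, 2)$ all lie inside cyclic subgroups of order $4$. Combined with the orbit count of two, Hall's theorem forces $n \leq 2$, the desired contradiction.

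The main technical obstacle, and what pins down the tight bound $n \leq 2$, is the twin combinatorial check for $(4, 4, 4)$: showing it is the unique ``$2$-power'' generating type of $PSL_2(7)$, and that it has exactly two $\mathrm{Aut}$-orbits. Everything else reduces to routine structure-constant computations.
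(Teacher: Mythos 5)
Your proposal is correct and follows essentially the same route as the paper: existence via structure constants plus inspection of the maximal subgroups ($S_4$ and $F_{21}$), and the bound $n\le 2$ via Hall's orbit count, using that $(4,4,4)$ is the only admissible even type (two $\mathrm{Aut}(G)$-orbits) and that $(7,7,7)$ has only one. The only differences are cosmetic — you use $(4,4,3;7,7,7)$ and $(3,7,7)$ where the paper uses $(4,4,4;7,7,3)$, and you rule out types such as $(2,4,4)$ by showing they fail to generate rather than by non-hyperbolicity.
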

\begin{proof} The maximal subgroups of $G$ are known \cite[p.2]{ATLAS} and these are subgroups isomorphic to $S_4$ or point stabilisers in the natural representation of $G$ on 8 points. Hyperbolic generating triples cannot have type $(3,3,3)$, since $\frac{1}{3} + \frac{1}{3} + \frac{1}{3} \nless 1$, and similarly for types $(2,2,2)$, $(2,2,4)$ or $(2,4,4)$. The number of hyperbolic generating triples of type $(7,7,7)$ can be computed using GAP, but since it is equal to the order of Aut$(G)$ we see from \cite{hall} that there is no triple of type $(7,7,7)$ for $G^n$ when $n>1$. Triples of type $(4,4,4)$ exist and any such triple generates $G$ since elements of order 4 are not contained in point stabilisers and inside a subgroup isomorphic to $S_4$ the product of three elements of order $4$ cannot be equal to the identity. We can compute the number of such triples from the structure constants and since this is twice the order of Aut$(G)$ we have that there exists a hyperbolic generating triple of type $(4,4,4)$ on $G$ and on $G \times G$. We then see that this is the maximum number of direct copies of $G$ for which there exists a mixable Beauville structure.

For our odd triple we then take a triple of type $(7,7,3)$ which can be shown to exist by computing their structure constants and are seen to generate $G$ since if they were to belong to a maximal subgroup then the product of two elements of order $7$ would again have order $7$. This gives a mixable Beauville structure of type $(4,4,4;7,7,3)$ on $G$. Finally, we then have mixable Beauville structures on $G \times G$ of type $(4,4,4;7,7,21)$ by Lemma \ref{inequi} or alternatively of type $(4,4,4;7,21,21)$ by Lemma \ref{cop}.\end{proof}
\begin{lemma} \label{l28} Let $G=PSL_2(8)$. Then $G \times G$ admits a mixable Beauville structure.\end{lemma}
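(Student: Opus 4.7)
The element orders of $G=PSL_2(8)$ are $\{1,2,3,7,9\}$, so $2$ is the unique even order available. Any generating pair of $G$ whose first two entries were involutions would generate a dihedral group, which rules out a mixable structure on $G$ itself; the plan is therefore to build one on $G\times G$ by letting the componentwise lcm create larger even orders. Since the primes occurring are $\{2,3,7\}$ and $\nu_1$ must be even, the only way to satisfy the coprimality condition is to keep the prime $7$ out of $\nu_1$ and the primes $2$ and $3$ out of $\nu_2$; this already pins $\nu_2$ down to the form $7^k$ and hence dictates type $(7,7,7)$ for the odd triple.

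For the even half I would exhibit a hyperbolic generating triple $(x,y,z)$ of $G$ of type $(2,9,9)$. The maximal subgroups of $PSL_2(8)$ are $[2^3]{:}7$, $D_{14}$ and $D_{18}$, and only $D_{18}$ contains elements of order $9$; but inside $D_{18}$ an involution times a rotation of order $9$ is again an involution, so no $(2,9,9)$-triple can lie in $D_{18}$. Hence any such triple automatically generates $G$, and existence is a routine character-theoretic structure-constant verification in $PSL_2(8)$. Because $\gcd(2,9)=1$, Lemma~\ref{cop} then promotes $(x,y,z)$ to a hyperbolic generating triple $\bigl((x,y),(y,x^y),(z,z)\bigr)$ of $G\times G$ of type $(18,18,9)$, with the first two orders even and $\nu_1=18\cdot 18\cdot 9=2^2\cdot 3^6$.

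For the odd half I would produce two hyperbolic generating triples of $G$ of type $(7,7,7)$ lying in distinct $\operatorname{Aut}(G)$-orbits. The three conjugacy classes $7A$, $7B$ and $7C$ are cyclically permuted by the order-$3$ Frobenius outer automorphism, so $\operatorname{Aut}(G)$ acts on the multiset of conjugacy classes of the three entries of a triple; the class multisets $\{7A,7A,7A\}$ and $\{7A,7B,7C\}$ lie in distinct orbits of this action, so a generating triple drawn from each of these two flavours is inequivalent to one drawn from the other. The Goursat-type argument used repeatedly in the preceding lemmas then shows that the diagonal $\bigl((x_1,x_2),(y_1,y_2)\bigr)$ generates $G\times G$, producing a triple of type $(7,7,7)$ with $\nu_2=7^3=343$. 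Since $\gcd(\nu_1,\nu_2)=\gcd(2^2\cdot 3^6,\,7^3)=1$, combining the two halves yields a mixable Beauville structure on $G\times G$ of type $(18,18,9;\,7,7,7)$.

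The main obstacle will be locating generating triples of type $(7,7,7)$ inside each of the two class-multiset orbits. Elements of order $7$ are also contained in the maximal subgroups $[2^3]{:}7$ and $D_{14}$, so one has to rule out that each candidate triple is swallowed by one of these: in $D_{14}$ the three order-$7$ entries are forced into a common cyclic subgroup, turning $xyz=1$ into a short list of sums in $\ZZ/7\ZZ$ that one can exclude case by case, while in $[2^3]{:}7$ the Frobenius action of $C_7$ on $E_8$ plays an analogous role. Strict positivity of the relevant class-multiplication constants is then a finite character-table calculation, entirely amenable to a short GAP verification in the spirit of the other proofs in the paper.
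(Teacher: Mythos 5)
Your even half is fine: a $(2,9,9)$ triple of $PSL_2(8)$ exists, cannot lie in $D_{18}$, and Lemma~\ref{cop} turns it into an $(18,18,9)$ triple for $G\times G$. The fatal problem is the odd half. First, your reduction to type $(7,7,7)$ rests on a false claim: it is not true that the prime $3$ must be kept out of $\nu_2$. The even orders $2$ and $14$ are available in $G\times G$, so one can instead place the prime $7$ in $\nu_1$ and the prime $3$ in $\nu_2$; this is exactly what the paper does, starting from triples of types $(2,7,7)$, $(3,3,9)$ and $(3,9,9)$ for $G$ and obtaining a structure of type $(14,14,7;3,9,9)$ on $G\times G$ via Lemmas~\ref{cop} and~\ref{inequi}.

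Second, and decisively, the $(7,7,7)$ route you commit to is a dead end: $G\times G$ has no generating triple of type $(7,7,7)$. A structure-constant computation with the character table of $PSL_2(8)$ gives $n(7A,7B,7C)=1080$, while the count of such triples lying inside the nine Borel subgroups $2^3{:}7$ is $9\cdot 2\cdot 64$ minus the $2\cdot 36$ triples contained in a torus $C_7$ (each torus lies in two Borels), which is also $1080$. So \emph{every} triple with class multiset $\{7A,7B,7C\}$ lies in a Borel and fails to generate --- the congruence $1+2+4\equiv 0 \pmod 7$ that you dismiss as ``a short list of sums in $\ZZ/7\ZZ$'' is precisely the obstruction, since in $2^3{:}7$ only the images in the quotient $C_7$ matter and they do admit the solution $(t,t^2,t^4)$. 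The same happens for every mixed class multiset, so the only generating $(7,7,7)$-triples are the $3\times 504=1512$ with all three entries in one class, and since $|\operatorname{Aut}(G)|=1512$ they form a single $\operatorname{Aut}(G)$-orbit; by Hall's criterion no two inequivalent ones exist and $G\times G$ is not $(7,7,7)$-generated. (Compare the paper's treatment of $(7,7,7)$ for $PSL_2(7)$ in Lemma~\ref{l27}, where the same single-orbit phenomenon is used to rule out higher powers.) With your even triple fixed at $\nu_1=2^2\cdot 3^6$, the odd triple would be forced to have type $(7,7,7)$, so the construction cannot be repaired without changing the even half as well.
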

\begin{proof} It can easily be checked in GAP that for $G$ there exists hyperbolic generating triples of types $(2,7,7)$, $(3,3,9)$ and $(3,9,9)$. The two odd triples are inequivalent by Lemma \ref{inequi} and by Lemma \ref{cop} we have that there exists a mixable Beauville structure of type $(14,14,7;3,9,9)$ on $G \times G$.\end{proof}
\begin{lemma} \label{l29} Let $G=PSL_2(9)$. Then both $G$ and $G \times G$ admit a mixable Beauville structure. \end{lemma}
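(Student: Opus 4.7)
The plan is to observe the exceptional isomorphism $PSL_2(9)\cong A_6$ and reduce to Lemma \ref{a6}. Since the property of admitting a mixable Beauville structure is manifestly invariant under isomorphism (Definition \ref{mixable} is phrased purely in terms of element orders and generation), and the same is true for direct products, the claim for $G$ and $G\times G$ follows at once from the mixable structures of type $(4,4,4;5,5,5)$ on $A_6$ and $(4,12,12;5,5,5)$ on $A_6\times A_6$ already produced in Lemma \ref{a6}. Under this approach there is essentially nothing left to check.

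If one preferred a self-contained argument inside the $PSL_2(q)$ section (to keep the treatment uniform with Lemmas \ref{l27} and \ref{l28}), the alternative plan would be as follows. First, I would record that $|G|=360=2^3\cdot 3^2\cdot 5$ and that the element orders of $G$ are $1,2,3,4,5$, with the maximal subgroups being two conjugacy classes of $A_5$, two classes of $S_4$, and one class of $3^2{:}4$ (from the ATLAS). Then I would produce an even triple of type $(4,4,4)$ by a structure-constant count: any triple of elements of order $4$ whose product is the identity cannot lie in an $A_5$ (which has no elements of order $4$), nor in $3^2{:}4$ (whose only elements of order $4$ all square to the same central involution, so three such elements cannot multiply to $1$), nor in $S_4$ (where the product of three elements of order $4$ is never trivial, as can be checked directly or by signed-permutation parity), so any such triple generates $G$. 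A GAP computation (or explicit structure-constant calculation) then yields enough such triples to give inequivalent triples and hence, via Lemma \ref{inequi} or \ref{cop}, a triple for $G\times G$.

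For the odd part I would exhibit a triple of type $(5,5,5)$: elements of order $5$ lie only in the $A_5$ maximal subgroups, and a triple of order-$5$ elements multiplying to $1$ that is not contained in a single $A_5$ must generate $G$. A structure-constant count (exactly as in the $A_6$ case) shows that such triples exist in sufficient number to provide inequivalent triples for $G\times G$, giving a mixable Beauville structure of type $(4,4,4;5,5,5)$ on $G$ and a structure of type $(4,4,4;5,5,5)$ or $(4,4,4;5,5,25)$ on $G\times G$ after applying Lemma \ref{cop}.

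The only potential obstacle in the second approach is the bookkeeping around counting inequivalent triples, but this is exactly the routine computation already carried out for $A_6$. Given the isomorphism $PSL_2(9)\cong A_6$, the cleanest proof is simply to cite Lemma \ref{a6}, and this is what I would present.
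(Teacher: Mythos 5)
Your primary argument is exactly the paper's proof: invoke the exceptional isomorphism $PSL_2(9)\cong A_6$ and cite Lemma \ref{a6}. This is correct and complete; the alternative self-contained argument you sketch is unnecessary but plausible.
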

\begin{proof} This follows directly from the exceptional isomorphism $PSL_2(9) \cong A_6$ and Lemma \ref{a6}. \end{proof}

We make use of the following Lemmas:
\begin{lemma} \label{phil} Let $G=PSL_2(q)$ for $q=7, 8$ or $q \geq 11$. Let $k=$ gcd$(2,q+1)$ and $\phi(n)$ be Euler's totient function. Then, under the action of Aut$(G) = P \Gamma L_2(q)$ the number of conjugacy classes of elements of order $\frac{q+1}{k}$ in $G$ is $\phi(\frac{q+1}{k})/2e$.\end{lemma}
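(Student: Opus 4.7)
The plan is to identify a cyclic subgroup $C \leq G$ of order $m := (q+1)/k$ inside which every element of order $m$ lies up to conjugacy, and then count $\operatorname{Aut}(G)$-orbits on the $\phi(m)$ generators of $C$ by successively accounting for $G$-conjugation, diagonal automorphisms, and the field automorphisms.

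First, using the subgroup structure of $PSL_2(q)$ recorded in \cite{fj}, the elements of order $m$ in $G$ are precisely the non-identity elements of a single $G$-conjugacy class of cyclic subgroups $C$, namely the images of the non-split maximal tori. The normalizer $N_G(C)$ is dihedral of order $2m$: its cyclic part centralizes $C$, while the outer coset inverts it. Hence two generators $x,y$ of $C$ are $G$-conjugate iff $y\in\{x,x^{-1}\}$, producing $\phi(m)/2$ $G$-classes of elements of order $m$. For $q$ odd, $PGL_2(q)$ normalizes $C$ in a dihedral group of order $2(q+1)$, but the outer coset still acts on $C$ only by inversion, so no $G$-classes are fused by the diagonal outer automorphism; for $q$ even we have $PSL_2(q)=PGL_2(q)$ and the issue does not arise.

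The essential step is the action of the Frobenius $\sigma$ of order $e$. Under the standard identification of $C$ with the image in $PSL_2(q)$ of the norm-one subgroup of $\mathbb{F}_{q^2}^*$, $\sigma$ acts by $x \mapsto x^p$, so $\operatorname{Aut}(G)$-fusion on generators of $C$ is controlled by the subgroup $\langle -1, p\rangle \leq (\mathbb{Z}/m)^*$. Since $p^e = q \equiv -1 \pmod{q+1}$, we have $p^e \equiv -1 \pmod{m}$, so $-1 \in \langle p \rangle$; a short check, using $p^{e-1}+1 < m$ in the excluded-small-case range $q=7,8$ or $q\geq 11$ (which forces $m\geq 4$), rules out $p^i\equiv \pm 1 \pmod{m}$ for any $0<i<e$, so that $p$ has order exactly $2e$ modulo $m$. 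Therefore $\langle -1,p\rangle = \langle p\rangle$ has order $2e$ and $\operatorname{Aut}(G)$ partitions the $\phi(m)$ generators of $C$ into exactly $\phi(m)/(2e)$ orbits, which is the claim.

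The main obstacle is the slightly awkward bookkeeping: the outer automorphism group has order $e$ for $q$ even and $2e$ for $q$ odd, yet the final count is the same in both cases because in the odd case the diagonal automorphisms contribute nothing new, while in the even case the Frobenius already encodes inversion via $p^e \equiv -1$. The only genuinely delicate input is the arithmetic statement that $p$ has order precisely $2e$ modulo $m$, which must be cross-checked against the small values of $q$ that the hypothesis deliberately excludes.
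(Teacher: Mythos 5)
Your proposal is correct and follows essentially the same route as the paper: count the $\phi((q+1)/k)/2$ classes in $G$ coming from inversion, observe that diagonal automorphisms contribute no further fusion, and then show the field automorphisms fuse these classes into orbits of full size $e$. The one place you diverge is in justifying that last step — the paper argues loosely that the Frobenius fixes only prime-subfield entries so the orbit of an element has length $e$, whereas you compute that $p$ has multiplicative order exactly $2e$ modulo $(q+1)/k$ (using $p^e\equiv -1$ and the bound $p^i+1<(q+1)/k$ for $0<i<e$), which is the more careful argument since it rules out a power of the Frobenius sending a class to the class of the inverse.
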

\begin{proof} Elements of order $\frac{q+1}{k}$ are conjugate to their inverse so there are $\phi(\frac{q+1}{k})/2$ conjugacy classes of elements of order $\frac{q+1}{k}$ in $PSL_2(q)$. The only outer automorphisms of $G$ come from the diagonal automorphisms and the field automorphisms, but since diagonal automorphisms do not fuse conjugacy classes of semisimple elements we examine the field automorphisms. These come from the action of the Frobenius automorphism on the elements of the field $\FF_q$ sending each entry of the matrix to its $p$-th power. The only fixed points of this action are the elements of the prime subfield $\FF_p$ and so, since the entries on the diagonal of the elements of order $\frac{q+1}{k}$ are not both contained in the prime subfield we have that the orbit under this action has length $e$, the order of the Frobenius automorphism. We then get $e$ conjugacy classes of elements of order $\frac{q+1}{k}$ inside of $G$ fusing under this action. Hence under the action of the full automorphism group there are $\phi(\frac{q+1}{k})/2e$ conjugacy classes of elements of order $\frac{q+1}{k}$. \end{proof}

\begin{lemma} \label{2e+1} For a prime power, $q=p^e \geq 13$, $q \neq 27$, let $q^+=\frac{q+1}{k}$ where $k=$ gcd$(2,q+1)$. Then $\frac{\phi(q^+)}{2e} > 1$ where $\phi(n)$ is Euler's totient function.  \end{lemma}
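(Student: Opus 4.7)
The plan is to apply Zsigmondy's Theorem (Theorem \ref{zsig}) to $p^{2e}-1$ in order to produce a prime $r$ dividing $q^+$ whose size forces $\phi(q^+) \geq r-1 \geq 2e$, and then to check that the cases of equality correspond precisely to the excluded values of $q$.

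I would first dispose of the case $e=1$ directly. Here $q=p$ is an odd prime with $p \geq 13$, so $q^+ = (p+1)/2 \geq 7$; since the integers $m$ with $\phi(m) \leq 2$ are exactly $\{1,2,3,4,6\}$, we obtain $\phi(q^+) > 2 = 2e$ at once.

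For $e \geq 2$, I would apply Zsigmondy to $p^{2e}-1$: the exception $(p,2e)=(2,6)$ corresponds to the excluded $q=8$, and the exception $2e=2$ is the $e=1$ case just handled, so a primitive prime divisor $r$ of $p^{2e}-1$ exists. Since $r \nmid p^e - 1 = q-1$, we have $r \mid q+1$, and because $2e \geq 4$ forces $r$ odd, in fact $r \mid q^+$. The multiplicative order of $p$ modulo $r$ is exactly $2e$, so by Fermat's Little Theorem $2e \mid r - 1$, giving $r \geq 2e+1$ and hence $\phi(q^+) \geq r-1 \geq 2e$.

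To upgrade to the strict inequality, suppose $\phi(q^+) = 2e$; then necessarily $r = 2e+1$ and $\phi(q^+) = r-1$. Writing $q^+ = r^a m$ with $\gcd(r,m)=1$, the identity $\phi(q^+) = r^{a-1}(r-1)\phi(m)$ forces $a=1$ and $\phi(m)=1$, so $q^+ \in \{r, 2r\}$. Combining this with $q+1 \in \{q^+, 2q^+\}$ (according as $q$ is even or odd) reduces the problem to showing that the Diophantine conditions $p^e \in \{2e,\; 4e+1,\; 8e+3\}$ have no prime-power solutions satisfying both $q \geq 13$ and $q \neq 27$. The only step requiring any real care is this final enumeration: the complete list of prime-power solutions is $q \in \{2, 4, 5, 9, 11, 27\}$, and each is ruled out by hypothesis. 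The most delicate point, which shows that the hypothesis of the lemma is sharp, is $q = 27$: here $q^+ = 14 = 2 \cdot 7$ with $r = 2e+1 = 7$, yielding $\phi(q^+) = 6 = 2e$ exactly, so omitting the value $q=27$ from the statement is unavoidable.
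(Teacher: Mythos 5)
Your proof is correct, but it takes a genuinely different route from the paper's. The paper argues by direct exhibition of units: it writes down the set $S=\{p^i,\,q^+-p^i \mid 0\le i\le e-1\}$ of $2e$ distinct integers less than and coprime to $q^+$ (coprimality holding because $q^+$ divides $p^e+1$), and then finds one further unit by a case analysis on $p$ and on $q \bmod 4$ (taking $7$ when $p=2$, $11$ when $p=3$, $q^+-2$ when $q\equiv 1 \bmod 4$, and $\tfrac{p-1}{2}$ or a direct totient computation when $q\equiv 3\bmod 4$), concluding $\phi(q^+)\ge 2e+1$. Your argument instead invokes Zsigmondy's Theorem (already stated as Theorem \ref{zsig} in the paper) applied to $p^{2e}-1$: the primitive prime divisor $r$ is odd, divides $q^+$, and satisfies $2e \mid r-1$, whence $\phi(q^+)\ge r-1\ge 2e$, and the equality analysis reduces to the Diophantine conditions $p^e\in\{2e,\,4e+1,\,8e+3\}$, whose full solution set $\{2,4,5,9,11,27\}$ is excluded by hypothesis. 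I checked your enumeration and it is complete. The paper's approach is more elementary (no appeal to primitive prime divisors) but requires a fiddly, somewhat ad hoc construction of the $(2e+1)$-st unit and a verification that the elements of $S$ are pairwise distinct; yours buys a cleaner conceptual structure, a reduction to a finite check, and --- a genuine bonus absent from the paper --- an explanation of \emph{why} $q=27$ must be excluded, namely that $q^+=14$ attains the bound $\phi(q^+)=6=2e$ exactly, so the hypothesis of the lemma is sharp.
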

\begin{proof} Let $S=\{p^i,q^+-p^i \mid 0 \leq i \leq e-1 \}$ be a set of $2e$ positive integers less than and coprime to $q^+$ and whose elements are distinct when $q \geq 13$. To this set we add $k$ which will depend on $q$. When $p=2$ we let $k=7$ since for all $e>3$, $7 \notin S$ and gcd$(7,q^+)=1$. When $p=3$ we let $k=11$, then for $e>3$, $11 \notin S$ and gcd$(11,q^+)=1$. Now consider the cases $q \equiv \pm 1$ mod $4$ for $p \neq 2,3$. When $q \equiv 1$ mod $4$, let $k=q^+-2$. Since $p \neq 2$ we have $k \notin S$ and since $q^+$ is odd when $q \equiv 1$ mod $4$ we have gcd$(k,q^+)=1$. Finally, when $q \equiv 3$ mod $4$ then $e$ must be odd. When $e > 2$ then $k=\frac{p-1}{2} \notin S$ and is coprime to $q^+$. When $e=1$, $q^+=2^im$ where $i>0$ and $m$ is odd. Then $\phi(q^+)=\phi(2^i)\phi(m)=2^{i-1}\phi(m)>2$ since $p>11$. This completes the proof. \end{proof}

\begin{lemma} \label{psl22} Let $G$ be the projective special linear group $PSL_2(q)$ where $q \geq 7$. Then, \begin{enumerate}
\item there is a mixable Beauville structure for $G \times G$, and;
\item when $p \neq 2$ there is also a mixable Beauville structure for $G$.\end{enumerate} \end{lemma}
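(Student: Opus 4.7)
The plan is to combine Gow's Theorem with Dickson's classification of the maximal subgroups of $PSL_2(q)$ to produce the required generating triples. The cases $q \in \{7,8,9\}$ are handled by Lemmas \ref{l27}, \ref{l28} and \ref{l29}, so I assume $q \geq 11$. Write $q^+ = (q+1)/k$ and $q^- = (q-1)/k$ with $k=$ gcd$(2,q+1)$, and note that gcd$(q^+,q^-) = 1$ in both parities of $q$, which supplies the coprimality of $\nu$-values demanded by Definition \ref{mixable}.

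For part (2), with $p$ odd and $q \geq 11$: exactly one of $q^+, q^-$ is even (call it $E$) and the other is odd (call it $O$), since they are consecutive integers. I aim to exhibit a hyperbolic generating triple of type $(E,E,E)$ and one of type $(O,O,O)$ on $G$. Theorem \ref{gow} guarantees that any non-identity semisimple element of $G$ can be written as a product of two regular semisimple elements drawn from prescribed classes, so triples with the required orders exist in abundance. Generation is verified by ruling out maximal subgroups: a dihedral subgroup of order $2m$ cannot contain three elements of order $m \geq 3$ whose product is trivial without all of them lying in the cyclic part, where such a relation is visible; Borel subgroups are excluded because semisimple elements of a fixed order lie in a single conjugacy pattern of tori incompatible with producing a trivial product of three such elements; and the subfield subgroups $PSL_2(q_0), PGL_2(q_0)$ together with the exceptional subgroups $A_4, S_4, A_5$ are ruled out by order considerations once $q$ is sufficiently large, with the short list of residual $q$ dispatched directly.

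For part (1) with $p = 2$, so $q = 2^n$ and $n \geq 4$ (the case $n=3$ being Lemma \ref{l28}): every involution of $G$ has order exactly $2$, so no generating pair of $G$ can consist of two elements of even order exceeding $2$, confirming why part (2) excludes this case. I therefore work directly in $G \times G$. Using Theorem \ref{gow} I produce a hyperbolic triple on $G$ of type $(2,q-1,q+1)$, and since gcd$(q-1,q+1)=1$ when $q$ is even, Lemma \ref{cop} lifts this to a triple on $G \times G$ whose first two entries have even orders $2(q-1)$ and $2(q+1)$. A compatible odd triple on $G \times G$ is assembled from an odd triple on $G$ of type $(q-1,q-1,q-1)$ or $(q+1,q+1,q+1)$ via Lemma \ref{cop} or Lemma \ref{inequi}, with coprimality following from the disjoint prime supports of $q-1$ and $q+1$.

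For the lift to $G \times G$ when $p$ is odd, I invoke Lemma \ref{2e+1}: for $q \geq 13$ with $q \neq 27$ there are at least two Aut$(G)$-orbits of elements of order $q^+$, from which Lemmas \ref{equi} and \ref{inequi} produce two inequivalent triples of type $(q^+,q^+,q^+)$ on $G$, and these combine into a hyperbolic generating triple on $G \times G$ of the same type; the remaining cases $q = 11$ and $q = 27$ are handled by explicit computation. The principal obstacle is the uniform verification of generation across the varied maximal subgroup structure of $PSL_2(q)$, particularly the exceptional and subfield subgroups for small $q$, combined with the more delicate construction in the $p=2$ case for $G \times G$, where the absence of higher-order involutions in $G$ itself forces all evenness to emerge from the product structure across the two factors.
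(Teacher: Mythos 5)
Your treatment of the $(q^+,q^+,q^+)$ triple and of the lift to $G\times G$ via a second $\mathrm{Aut}(G)$-class (Lemmas \ref{phil} and \ref{2e+1}, with $q=11,27$ done by hand) is essentially the paper's argument. The divergence, and the genuine gap, is in your second triple. You propose type $(q^-,q^-,q^-)$ and dismiss the Borel subgroups with the assertion that ``semisimple elements of a fixed order lie in a single conjugacy pattern of tori incompatible with producing a trivial product of three such elements.'' As a general principle this is false: the point stabiliser $B=q{:}q^-$ surjects onto a cyclic group of order $q^-$ and contains many product-one triples of elements of order $q^-$ (e.g.\ $x,x,x^{-2}$ when $q^-$ is odd). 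Gow's theorem only hands you $x,y\in C$ with $xy$ of the right order; it gives no control over $\langle x,y\rangle$, so you must actually rule out $B$. The only way to do so is to use that all three elements lie in a \emph{single} class $C$, so that $C\cap B$ maps onto $\{\bar t^{\pm1}\}$ in $B/U\cong C_{q^-}$ and $\pm1\pm1\pm1\not\equiv 0 \bmod q^-$ for $q^->3$ --- an argument you neither state nor gesture at, and which also needs the exceptional subgroups ($A_5$ when $q^-=5$) and subfield subgroups checked. The paper avoids all of this by taking the second triple to be of type $(p,q^-,q^-)$ and quoting its existence and generation from Jones \cite{JonesChar2}.

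Your $p=2$ branch has a second, independent error: you invoke Theorem \ref{gow} to produce a triple of type $(2,q-1,q+1)$, but in characteristic $2$ an involution is unipotent, not semisimple, and its centraliser has order divisible by $p$; it can serve neither as the target element $g$ nor as a member of the regular semisimple classes $L_1,L_2$, so Gow's theorem produces no such triple. (Such triples exist, but you need another argument; the paper instead runs the same uniform construction as for odd $p$, namely $(2,q-1,q-1)$ from Jones and $(q+1,q+1,q+1)$ from Gow.) Relatedly, your proposed lift of the odd triple to $G\times G$ ``via Lemma \ref{cop} or Lemma \ref{inequi}'' does not work for type $(q+1,q+1,q+1)$: Lemma \ref{cop} needs a coprime pair of orders within the triple, and Lemma \ref{inequi} on its own gives no generation statement for $G\times G$; you need two inequivalent triples of that type, i.e.\ the counting argument of Lemmas \ref{phil} and \ref{2e+1}, which you cite for odd $p$ but not here.
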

\begin{proof} In light of Lemmas \ref{l27}--\ref{l29} we can assume that $q \geq 11$. Define $q^+= \frac{q+1}{k}$, where $k=$ gcd$(2,q+1)$, and similarly for $q^-$. Jones proves in \cite{JonesChar2} that hyperbolic generating triples of type $(p,q^-,q^-)$ exist for $G$ when $q \geq 11$ and since gcd$(p,q^-)=1$ we immediately have, by Lemma \ref{cop}, a hyperbolic generating triple for $G \times G$. We proceed to show that there exists a hyperbolic generating triple $(x,y,z)$ for $G$ of type $(q^+,q^+,q^+)$ and note that both $p$ and $q^-$ are coprime to $q^+$. The only maximal subgroups containing elements of order $q^+$ are the dihedral groups of order $2q^+$ which we denote by $D_{q^+}$. By Gow's Theorem, for a conjugacy class, $C$, of elements of order $q^+$ there exist $x,y,z \in C$ such that $xyz=1$. Since inside $D_{q^+}$ any conjugacy class of elements of order $q^+$ contains only two elements, $x,y$ and $z$ can not all be contained in the same maximal subgroup of $G$. Hence $(x,y,z)$ is a hyperbolic generating triple for $G$ of type $(q^+,q^+,q^+)$. When the number of conjugacy classes of elements of order $q^+$ in $G$ under the action of Aut$(G)$ is strictly greater than $1$ we can apply Gow's Theorem a second time to give a hyperbolic generating triple of type $(q^+,q^+,q^+)$ for $G \times G$. This follows from Lemmas \ref{phil} and \ref{2e+1} with the exceptions of $q=11$ or $27$. For $G=PSL_2(11)$ we have that a triple of type $(p,q^-,q^-)$ exists by \cite{JonesChar2} or alternatively the words $ab$ and $[a,b]$ in the standard generators for $G$ \cite{brauer} give an odd triple of type $(11,5,5)$. In both cases we have, by Lemma \ref{cop}, an odd triple of type or $(55,55,5)$ for $G \times G$. For our even triple, the structure constants for the number of triples of type $(6,6,6)$ can be computed and is seen to be twice the order of Aut$(G)$ and so we have an even triple for $G$ and $G \times G$. For $G=PSL_2(27)$ we take the words in the standard generators \cite{brauer} $(ab)^2(abb)^2$, $a^{b^2}$ which give an even triple of type $(2,14,7)$ and the words $b^2,b^a$ which give an odd triple of type $(3,3,13)$. Again, by Lemma \ref{cop}, these give a mixable Beauville structure on $G \times G$. Finally, we remark that when $q \equiv \pm 1$ mod $4$ we have that $q^-$ and $q^+$ have opposite parity and this determines the parity of our triples. When $q \equiv 1$ mod $4$, $(p,q^-,q^-)$ becomes our even triple, $(q^+,q^+,q^+)$ our odd triple and vice versa when $q \equiv 3$ mod $4$. \end{proof}

\subsection{Projective Special Unitary groups $PSU_3(q) \cong$ $^2A_2(q)$}
The projective special unitary groups $PSU_3(q)$ are defined over fields of order $q^2$ and have order $q^3(q^3+1)(q^2-1)/d$ where $d=(3,q+1)$. Their maximal subgroups can be found in \cite{psu3} and we refer to the character table and notation in \cite{ssf}.

\begin{lemma} \label{psut} \label{t} Let $G=PSU_3(q)$ for $q = 4$ or $q \geq 7$. Let $d=gcd(3,q+1)$ and $t' = \frac{q^2-q+1}{d}$. Then there exists a hyperbolic generating triple of type $(t', t', t')$ for $G$. \end{lemma}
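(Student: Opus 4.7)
The plan is to produce the desired triple via Gow's theorem (Theorem \ref{gow}) applied to a $G$-conjugacy class $C$ of elements of order $t'$, and then to verify via a structure-constant estimate that the resulting pair generates $G$.

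First I would invoke the classification of maximal subgroups of $PSU_3(q)$ in \cite{psu3} to observe that the only maximal subgroup whose order is divisible by $t' = (q^2-q+1)/d$ is the Singer normalizer $N = T \rtimes \langle \sigma \rangle$, with $T \cong C_{t'}$ cyclic and $\sigma$ of order $3$, so $|N| = 3t'$. This follows from Zsigmondy's theorem (Theorem \ref{zsig}): in the range $q=4$ or $q \geq 7$, $t'$ is divisible by a prime $\Phi_6(q)$ that divides no $q^k - 1$ for $k<6$, and hence divides the order of no other maximal subgroup.

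Next I would analyse the internal structure of $N$. A short case-check modulo $3$ shows $\gcd(3,t')=1$, so every element of $N$ of order $t'$ lies in $T$. Using $q^3+1 = (q+1)t'd$ gives $q^3 \equiv -1 \pmod{t'}$, so the map $\alpha \mapsto \alpha^{-q}$ is an automorphism of $T$ of order $3$ that realises the action of $\sigma$. For $\alpha \in T$ of order $t'$, the intersection $T \cap C$ is then the Weyl orbit $\{\alpha, \alpha^{-q}, \alpha^{q^2}\}$, and critically
\[
\alpha \cdot \alpha^{-q} \cdot \alpha^{q^2} = \alpha^{q^2 - q + 1} = \alpha^{t'd} = 1.
\]
Thus, unlike in the $PSL_2$ analogue (Lemma \ref{psl22}) where no triple in $C^3$ multiplies to $1$ inside the relevant maximal subgroup, here such triples do exist inside $N$. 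Enumerating the six possible sums of pairs drawn from $\{1,-q,q^2\}$ modulo $t'$ and checking that in the range $q=4$ or $q\geq 7$ none of them coincide with $-1 \pmod{t'}$ except $(-q)+q^2$ shows that there are exactly two ordered pairs $(x,y) \in (T \cap C)^2$ with $xy = \alpha^{-1}$, namely $(\alpha^{-q},\alpha^{q^2})$ and its reverse.

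Finally, by Gow's theorem there exist $x,y \in C$ with $xy = \alpha^{-1}$, giving a triple $(x,y,\alpha) \in C^3$ of type $(t',t',t')$ with $xy\alpha = 1$. The hard part will be to ensure this pair can be chosen so that $\langle x,y\rangle = G$: equivalently, that the total number of such pairs strictly exceeds the two lying inside the unique Singer normalizer containing $\alpha$. For this I would compute the structure constant
\[
\frac{|C|^2}{|G|} \sum_{\chi \in \mathrm{Irr}(G)} \frac{\chi(\alpha)^3}{\chi(1)}
\]
directly from the character table of $PSU_3(q)$ in \cite{ssf}; only the trivial, Steinberg and cuspidal ``Singer'' characters are non-zero on the regular semisimple element $\alpha$, which reduces the sum to an explicit finite expression. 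Comparing its leading term of order $|G|/t'^2$ to the error coming from the bounded character values on $\alpha$ then gives a quantity strictly exceeding $2$ throughout the admissible range $q=4$ or $q\geq 7$, producing the required generating pair.
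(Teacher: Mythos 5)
Your proposal follows essentially the same route as the paper's proof: identify $N_G(T)$ of order $3t'$ as the unique maximal overgroup, observe that $C$ meets it in the Weyl orbit $\{g,g^{-q},g^{q^2}\}$ whose distinct-element products account for exactly $2\lvert C\rvert$ ordered triples, and then beat this count with a structure-constant estimate from the character table of \cite{ssf} (the paper handles $q=4,7$ by direct computation rather than by the asymptotic bound). The invocation of Gow's theorem is redundant here, since the structure-constant bound already gives existence, but the argument is correct.
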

\begin{proof} Let $G, d$ and $t'$ be as in the hypothesis. Let $C$ be a conjugacy class of elements of order $t'$ in $G$ and for $g \in C$, let $T = \langle g \rangle$. When $q \geq 7$ the unique maximal subgroup of $G$ containing $g$ is $N_G(T)$ of order $3t'$. Since gcd$(6,t')=1$ and since $T$ is by definition normal in $N_G(T)$ the Sylow $p$-subgroup in $N_G(T)$ for any prime $p \vert t'$ is contained in $T$ and is therefore unique. Hence for all $x \in N_G(T) \setminus T$ the order of $x$ is 3. Then for $g \in C$ since $g$ is conjugate to $g^{-q}$ and $g^{q^2}$ \cite{ssf} we have $C \cap N_G(T) = \{g, g^{-q}, g^{q^2} \}$. This partitions $C$ into $\vert C \vert /3$ disjoint triples. Using the structure constants obtained from the character table of $G$ we show that it is possible to find a hyperbolic generating triple of $G$ entirely contained within $C$. Our method is to count the total number of triples $(x,y,z) \in C \times C \times C$ such that $xyz=1$, which we denote $n(C,C,C)$, and show that there exists at least one such triple where $x,y,z$ come from distinct maximal subgroups. Let $S$ be a triple of the form $\{g,g^{-q},g^{q^2}\} \subset C$, then for $s_1, s_2, s_3 \in S, s_1s_2s_3=1$ if and only if all three elements are distinct. Therefore the contribution to $n(C,C,C)$ from triples contained within a single maximal subgroup of $G$ is $2 \lvert C \rvert$. Using the formula for structure constants as found in \cite{frob} we have that $$n(C,C,C) = \frac{\lvert C \rvert^3}{\lvert G \rvert} \left( 1- \frac{1}{q(q-1)} -\frac{1}{q^3} -\sum_1^{t'-1} \frac{(\zeta_{t'}^u + \zeta_{t'}^{-uq} + \zeta_{t'}^{uq^2})^3}{3(q+1)^2(q-1)} \right)$$ where $\zeta_{t'}$ is a primitive $t'$-th root of unity. From the triangle inequality we have $\lvert (\zeta_{t'}^u + \zeta_{t'}^{-uq} + \zeta_{t'}^{uq^2})\rvert^3 \leq 27$ so we can bound $n(C,C,C)$ from below and for $q \geq 8$ the following inequality holds $$n(C,C,C) \geq \frac{\lvert C \rvert^3}{\lvert G \rvert} \left( 1- \frac{1}{q(q-1)} -\frac{1}{q^3} - \frac{9(t'-1)}{(q+1)^2(q-1)} \right) > 2\lvert C \rvert.$$ For $q = 4$ or $7$ direct computation of the structure constants show that we can indeed find a hyperbolic generating triple of the desired type. \end{proof}

\begin{lemma} \label{psu3} Let $G=PSU_3(q)$ for $q = 4$ or $ q\geq 7$. Let $c=gcd(3,q^2-1)$, $d=gcd(3,q+1)$ and $t'=(q^2-q+1)/d$. Then \begin{enumerate}
\item for $p=2$ there exists a mixable Beauville structure on $G$ of type $$\left(2,4,\frac{q^2-1}{c};t', t', t'\right),$$
\item for $p \neq 2$ there exists a mixable Beauville structure on $G$ of type $$\left(p,q+1,\frac{q^2-1}{d};t',t',t' \right).$$
\end{enumerate}
\end{lemma}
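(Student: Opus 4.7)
The plan is to combine the odd triple $(t',t',t')$ produced in Lemma \ref{psut} with an even triple of the prescribed type and then verify that the resulting $\nu$-values are coprime. The key observation in both characteristics is that the element of maximal split-torus order, namely $(q^2-1)/c$ for $p=2$ or $(q^2-1)/d$ for $p$ odd, lies in only a few conjugacy classes of maximal subgroups of $G$ (from \cite{psu3}): essentially the torus normaliser and, via the cyclic Levi, a maximal parabolic. This tightly restricts where a candidate generating triple containing such an element could degenerate.

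For the existence of the even triple I would run the same style of structure-constant argument as in Lemma \ref{psut}, using the character table in \cite{ssf} and the Frobenius formula. Pick conjugacy classes $C_1,C_2,C_3$ whose orders match the prescribed type and count triples $(x,y,z)\in C_1\times C_2\times C_3$ with $xyz=1$; then estimate from above the number of such triples whose three components lie inside a common maximal subgroup. A unipotent of order $p$ is excluded from any torus normaliser, so combining it with a split-torus element forces the third element out of that normaliser. In the $p$ odd case an element of order $q+1$ cannot lie in the relevant parabolic in the generic range, since the parabolic's non-unipotent Levi is cyclic of order $(q^2-1)/d$, which is coprime to $q+1$ (see the coprimality argument below). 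In the $p=2$ case the involution and the element of order $4$ come naturally from the embedded $GU_2(q)$ block and the argument is formally identical. For $q$ large enough these observations eliminate all maximal subgroups simultaneously; the residual small cases $q=4,7$ are verified directly in GAP.

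The coprimality of $\nu$-values reduces to showing $\gcd(t',\,p(q+1)(q^2-1)/c)=1$. Writing $q^2-q+1=(q+1)(q-2)+3$ gives $\gcd(q^2-q+1,\,q+1)\mid 3$; if $d=1$ this is immediate, and if $d=3$ then $(q^2-q+1)/3=((q+1)/3)(q-2)+1$ is coprime to $q+1$, so $\gcd(t',q+1)=1$. Similarly $q^2-q+1\equiv 1\pmod{q-1}$ and $q^2-q+1\equiv 1\pmod p$, so $t'$ is coprime to $q-1$ and to $p$. The parity condition of Definition \ref{mixable} is automatic: in characteristic $2$ the orders $2,4$ are even while $t'$ is odd, and in odd characteristic both $q+1$ and $(q^2-1)/d$ are even (so we take these as $o(a_1),o(c_1)$, letting $a_1c_1$ have odd order $p$).

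The main obstacle is the clean exclusion of the remaining families of maximal subgroups from containing the whole even triple, especially the subfield subgroups $PSU_3(q_0)$ with $q=q_0^r$ in the odd-characteristic case. Here I would invoke Zsigmondy's Theorem (Theorem \ref{zsig}) applied to $q+1$ or $(q^2-1)/d$ to exhibit a prime divisor that cannot occur in $|PSU_3(q_0)|$ except in a small, explicitly listable set of exceptions, which can then be cross-checked by hand or in GAP along with the $q=4,7$ cases mentioned above.
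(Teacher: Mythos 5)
Your overall strategy (Frobenius structure constants plus elimination of maximal subgroups, then coprimality) is the same as the paper's, and your coprimality and parity observations are correct. But the maximal-subgroup analysis for the even triple has a genuine gap, in two places. First, your claim that an element of order $q+1$ cannot lie in the stabiliser of an isotropic point because the cyclic Levi torus of order $(q^2-1)/d$ is ``coprime to $q+1$'' is false: $(q+1)/d$ divides $(q^2-1)/d$, and when $d=1$ that cyclic torus visibly contains elements of order $q+1$. Second, and more seriously, you never address the stabilisers of \emph{non-isotropic} points, the images of $GU_2(q)$ of order $q(q+1)^2(q-1)/d$; these contain elements of order $p$, of order $q+1$ \emph{and} of order $(q^2-1)/d$, so no argument based on element orders alone can exclude them, and they would contribute an uncontrolled number of degenerate triples to any structure-constant count set up only by orders.

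The missing idea is that the conjugacy classes must be selected by centraliser order, not merely by element order. The paper takes the order-$p$ elements from the class $C_3^0$ of unipotents with centraliser of order $q^2$: an element of order $p$ stabilising a non-isotropic point lies in an $SL_2(q)$, so its centraliser order is divisible by $2p$, hence $C_3^0$ misses every non-isotropic point stabiliser. It takes the order-$(q+1)$ elements from the class $C_6$ with centraliser of order $(q+1)^2/d$: an element of order $q+1$ stabilising an isotropic point lies in the cyclic torus of order $(q^2-1)/d$, so its centraliser order is divisible by $(q^2-1)/d$, hence $C_6$ misses every isotropic point stabiliser. Only with these choices does every triple counted by $n(C_3^0,C_6,C_7)$ automatically avoid both parabolic-type families, leaving just the bounded-order maximal subgroups, which the paper eliminates because $\langle x,y,z\rangle$ has order divisible by $p(q^2-1)/d$; subfield subgroups need no Zsigmondy argument since their element orders are far too small to accommodate an element of order $(q^2-1)/d$. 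Note also that for $p=2$ the paper does not run a structure-constant computation at all but simply quotes the existence of a $(2,4,(q^2-1)/c)$ generating triple from \cite{fmp}, so your proposed uniform treatment of both characteristics would need the analogous class-selection repairs there too.
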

\begin{proof} The existence of hyperbolic generating triples of type $(t',t',t')$ in both even and odd characteristic is given in Lemma \ref{psut} and so we turn to the even triples. When $p=2$ we have the existence of our even triples from \cite[Lemma 4.20 and Theorem 4.22]{fmp} and so we now assume that $G=PSU_3(q)$ where $p$ is odd, $q \geq 7$, $r=q+1$ and $s=q-1$.

From the list of maximal subgroups of $G$ elements of order $rs/d$ exist and can belong to subgroups corresponding to stabilisers of isotropic points, stabilisers of non-isotropic points and possibly one of the maximal subgroups of a fixed order which can occur is $PSU_3(q)$ for certain $q$. Stabilisers of isotropic points have order $q^3r/d$ whereas stabilisers of non-isotropic points have order $qr^2s/d$. There exist $1+d$ conjugacy classes of elements of order $p$ in $G$ which are as follows. The unique conjugacy class, $C_2$, of elements whose centralisers have order $q^3r/d$; and $d$ conjugacy classes, $C_3^{(l)}$ for $0 \leq l \leq d-1$, of elements whose centralisers have order $q^2$. Since an element of order $p$ which stabilises a non-isotropic point belongs to a subgroup of $G$ isomorphic to $SL_2(q)$, the order of its centraliser in $G$ must be a multiple of $2p$, hence must belong to $C_2$. In particular, elements of $C_3^0$ do not belong to stabilisers of non-isotropic points. There exists a conjugacy class, $C_6$, of elements of order $r$ whose centralisers have order $r^2/d$. An element of order $r$ contained in the stabiliser of an isotropic point must be contained in a cyclic subgroup of order $rs/d$, hence $rs/d$ must divide the order of its centraliser and so elements of $C_6$ are not contained in the stabilisers of isotropic points. Let $C_7$ be a conjugacy class of elements of order $rs/d$, then, any triple of elements $(x,y,z) \in C_3^0 \times C_6 \times C_7$, such that $xyz=1$, will not be entirely contained within the stabiliser of an isotropic or non-isotropic point. Let $n(C_3^0,C_6,C_7)$ be the number of such triples, then using the structure constant formula from \cite{frob} and the character table for $G$ we have $$n(C_3^0,C_6,C_7) = \frac{\vert C_3^0 \vert \vert C_6 \vert \vert C_7\vert}{\vert G \vert}\left( 1 + \sum_{u=1}^{\frac{r}{d}-1} \frac{\epsilon^{3u}(\epsilon^{3u} + \epsilon^{6u} + \epsilon^{(r-3)u})}{t}\right)$$ where $\epsilon$ is a primitive $r$-th root of unity. Using the triangle inequality we can bound the absolute value of the summation by $\frac{3q}{q^2-q+1}$ which, for $q \geq 7$, is strictly less than 1. In order to show that such an $(x,y,z)$ is not contained in any of the possible maximal subgroups of order 36, 72, 168, 216, 360, 720 or 2520, notice that the subgroup generated by $(x,y,z)$ has order divisible by $n=p(q^2-1)/d$, hence this can only occur when $p=$ 3, 5 or 7. The only cases where $n \leq 2520$ and divides one of the possible subgroup orders are the cases $q=7$ or $9$, but none of these subgroups contain elements of order 48 or 80 so we see that this is indeed an even triple for $G$. Finally, we must show that gcd$(\frac{4rs}{c},t')=1$ when $p$ is even and gcd$(\frac{prs}{d},t')=1$ when $p$ is odd. For all $p$ it is clear that gcd$(p,t')=1$ and so it suffices to show that gcd$(rs,t')=1$. We have that $t'd-s=q^2$ so $t'$ is coprime to $s$ and since $r^2-t'd=3q$ and $t'$ is coprime to 3 we have that $t'$ is coprime to $r$.
\end{proof}

In order to extend this to a mixable Beauville structure on $G \times G$ where $G=PSU_3(q)$ we will need the following Lemma:

\begin{lemma} \label{phiu} Let $G=PSU_3(q)$ for $q \geq 3$ and $d=(3,q+1)$. Then the number of conjugacy classes of elements of order $t'=(q^2-q+1)/d$ in $G$ under the action of Aut$(G)$ is $\phi(t')/6e$ where $\phi(n)$ is Euler's totient function. \end{lemma}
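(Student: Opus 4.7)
The plan is to mirror the three-step argument of Lemma \ref{phil}: count $G$-conjugacy classes of elements of order $t'$, show that diagonal outer automorphisms cause no further fusion, and compute the orbit length under the field automorphisms.

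For the first step, the proof of Lemma \ref{psut} already records that an element $g$ of order $t'$ in $G$ is conjugate to both $g^{-q}$ and $g^{q^2}$. I would check that $\{1,-q,q^2\}$ forms a cyclic subgroup of order $3$ in $(\ZZ/t'\ZZ)^*$, using $(-q)(q^2)=-q^3\equiv 1\pmod{t'}$ (which follows from $q^3+1=d(q+1)t'$), together with $\gcd(q+1,t')=1$ (itself a consequence of $\gcd(q+1,q^2-q+1)=\gcd(q+1,3)=d$). This yields exactly $\phi(t')/3$ conjugacy classes of elements of order $t'$ in $G$.

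Turning to outer automorphisms, $\mathrm{Out}(G)$ has order $2ed$, with a diagonal part of order $d$ and a field part of order $2e$ generated by the entrywise Frobenius $\sigma\colon x\mapsto x^p$ on $G\leq PGL_3(\FF_{q^2})$. The diagonal automorphisms do not fuse classes of elements of order $t'$: the cyclic subgroup $T$ of order $t'$ is contained in a cyclic subgroup of order $q^2-q+1$ of the Coxeter torus of $PGU_3(q)$, whose normaliser modulo centraliser is again the cyclic group of order $3$ acting through $\{1,-q,q^2\}\subset(\ZZ/t'\ZZ)^*$; so any diagonal element normalising $T$ acts on it by an automorphism already realised by inner conjugation in $G$.

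The main work concerns $\sigma$. Since $\sigma(g)$ and $g^p$ are semisimple elements of $G$ with identical eigenvalues in $\overline{\FF_q}$, they are $G$-conjugate, so $\sigma$ induces on $G$-conjugacy classes the map $[g]\mapsto[g^p]$. Using $q^3\equiv-1\pmod{t'}$ we write $q^2\equiv p^{2e}$ and $-q\equiv p^{4e}$ modulo $t'$, so the stabiliser of $[g]$ in $\langle\sigma\rangle$ consists of those $i\in\{0,\dots,2e-1\}$ with $p^i\in\{p^0,p^{2e},p^{4e}\}\pmod{t'}$. The key technical point is to show $\mathrm{ord}_{t'}(p)=6e$; granted this, only $i=0$ satisfies the condition, every orbit has length $2e$, and the total count is $(\phi(t')/3)/(2e)=\phi(t')/(6e)$, as required. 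To prove $\mathrm{ord}_{t'}(p)=6e$ (the hardest part), I would apply Theorem \ref{zsig} (valid since $q\geq 3$ avoids the Bang exception) to obtain a primitive prime divisor $r$ of $q^6-1$. The factorisation $q^6-1=(q-1)(q+1)(q^2+q+1)(q^2-q+1)$ together with the primitivity of $r$ forces $r\mid q^2-q+1=dt'$; one rules out $r=3$ because $d=3$ would give $3\mid q+1$ and hence $3\mid q^2-1=p^{2e}-1$, contradicting primitivity. Thus $r\mid t'$, and then $6e=\mathrm{ord}_r(p)$ divides $\mathrm{ord}_{t'}(p)\mid 6e$, yielding equality.
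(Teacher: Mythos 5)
Your argument is correct and has the same skeleton as the paper's: count $\phi(t')/3$ classes in $G$ from the fusion pattern $\{1,-q,q^2\}$, check the diagonal automorphisms cause no further fusion, and show each orbit under the field automorphisms has length $2e$, giving $\phi(t')/6e$. The difference lies in how the last (and crucial) step is justified. The paper's proof merely notes that the field automorphism has order $2e$ and that the diagonal entries of an element of order $t'$ are not all in the prime subfield, and concludes the orbit has length $2e$; as written this only shows that the \emph{element} is moved by $\sigma^i$ for $0<i<2e$, not that its \emph{conjugacy class} is, since $\sigma^i(g)$ could a priori be $G$-conjugate to $g$. You close exactly this gap: identifying the induced action on classes with $[g]\mapsto[g^p]$, rewriting $\{1,-q,q^2\}$ as $\{p^0,p^{2e},p^{4e}\}$ modulo $t'$ via $q^3\equiv-1$, and reducing everything to $\mathrm{ord}_{t'}(p)=6e$, which you then extract from a Zsigmondy prime $\Phi_{6e}(p)$ dividing $q^2-q+1$ (ruling out $r=3$ when $d=3$ is the right move to descend from $q^2-q+1$ to $t'$, and $q\geq3$ does avoid the Bang exception). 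This costs an extra appeal to Theorem \ref{zsig} but actually proves the orbit-length claim that the paper essentially asserts; your explicit treatment of the diagonal automorphisms via the relative Weyl group of the Coxeter torus likewise supplies a point the paper passes over in silence (it is only addressed, and then only by assertion, in the $PSL_2$ analogue, Lemma \ref{phil}).
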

\begin{proof} Let $x \in G$ have order $t'$, then $x$ is conjugate to $x^{-q}$ and $x^{q^2}$ and so the number of conjugacy classes of order $t'$ in $G$ is $\phi(t')/3$. Then since the field automorphism has order $2e$ and the diagonal entries of an element of order $t'$ are not all contained in the prime subfield its orbit has length $2e$. This gives the desired result. \end{proof}

\begin{lemma} \label{u3u3} Let $G$ be the projective special unitary group $PSU_3(q)$ for $q=7$ or $q \geq 9$. Let $c=$ gcd$(3,q^2-1)$, $d=$ gcd$(3,q+1)$ and $t'=(q^2-q+1)/d$. Then \begin{enumerate}
\item for $p=2$ there exists a mixable Beauville structure on $G \times G$ of type $$\left(2\frac{q^2-1}{c},2\frac{q^2-1}{c},4;t',t',t' \right),$$ and;
\item for $p \neq 2$ there exists a mixable Beauville structure on $G \times G$ of type $$\left(p(q+1),p(q+1),p\frac{q^2-1}{d};t',t',t' \right).$$
\end{enumerate}
\end{lemma}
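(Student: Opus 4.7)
The plan is to construct mixable Beauville structures on $G \times G$ by combining two ingredients: an even triple for $G\times G$ lifted from the even triple of $G$ produced in Lemma \ref{psu3} via the diagonal construction of Lemma \ref{cop}, and an odd triple of type $(t',t',t')$ for $G\times G$ built from two Aut$(G)$-inequivalent hyperbolic generating triples for $G$ of this type.

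For the odd part, the first step is to verify that $\phi(t')/6e > 1$ for $q=7$ or $q \geq 9$, by a divisor-counting argument analogous to Lemma \ref{2e+1} (now with the divisor $6e$ in place of $2e$). Granting this, Lemma \ref{phiu} supplies at least two distinct Aut$(G)$-orbits of conjugacy classes of elements of order $t'$. Exactly as in the proof of Lemma \ref{psut}, Gow's Theorem then produces two hyperbolic generating triples of type $(t',t',t')$ for $G$ whose representative elements of order $t'$ lie in distinct Aut$(G)$-orbits, and in particular the triples themselves are Aut$(G)$-inequivalent. By Hall's result referenced in Section~2, pairing these two triples componentwise yields a hyperbolic generating triple of type $(t',t',t')$ for $G\times G$.

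For the even part, I would take the even triple of $G$ provided by Lemma \ref{psu3} and apply Lemma \ref{cop} after a cyclic reordering. In characteristic $2$ the triple $(x,y,z)$ has type $(2,4,(q^2-1)/c)$ and $(q^2-1)/c$ is odd, so the shift $(z,x,y)$ puts coprime orders in the first two slots and Lemma \ref{cop} delivers a triple of the stated type $(2(q^2-1)/c,\,2(q^2-1)/c,\,4)$ for $G\times G$. In odd characteristic the triple $(x,y,z)$ of type $(p,q+1,(q^2-1)/d)$ has $p$ coprime to both of the other orders, so a suitable cyclic reordering, together with Lemma \ref{cop} (or the generalisation noted in the remark following it, which allows the coprime pair to occupy any two positions of the lifted triple), produces a hyperbolic generating triple on $G\times G$ of the stated type. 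The final verification is that the orders appearing in the even triple remain coprime to $t'$; this was already established at the end of the proof of Lemma \ref{psu3}, so the resulting even and odd triples assemble into a mixable Beauville structure on $G\times G$.

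The main obstacle I expect is the uniform bound $\phi(t')/6e>1$: the divisor $6e$ in place of $2e$ forces a more delicate counting of units modulo $t'$ than in Lemma \ref{2e+1}, and a small number of exceptional values of $q$ will likely escape any clean estimate. These can be handled on a case-by-case basis by producing explicit generating triples using standard generators or GAP, in direct analogy with the treatment of $q=11$ and $q=27$ in the proof of Lemma \ref{psl22}.
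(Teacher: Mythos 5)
Your proposal follows essentially the same route as the paper: the even triple comes from Lemma \ref{psu3} lifted to $G\times G$ via Lemma \ref{cop} (after a cyclic shift in characteristic $2$), and the odd triple comes from showing $\phi(t')>6e$ so that Lemma \ref{phiu} gives two Aut$(G)$-inequivalent $(t',t',t')$ triples via Gow's Theorem. The paper handles the bound $\phi(t')>6e$ exactly as you anticipate — a unit-counting argument for $q\geq 11$ split by the cases $d=1$ and $d=3$, with $q=7,9$ checked directly.
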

\begin{proof} Let the conditions of the hypothesis be satisfied with $p=2$. Then as in the proof of Lemma \ref{psu3} there exists a hyperbolic generating triple of type $(2,4,\frac{q^2-1}{c})$ on $G$ and by Lemma \ref{cop} this yields an even triple of type $(2\frac{q^2-1}{c},2\frac{q^2-1}{c},4)$ on $G \times G$. Similarly, for $p \neq 2$ by Lemma \ref{psu3} there exists a hyperbolic generating triple of type $(p,q+1,\frac{q^2-1}{d})$ on $G$, which by Lemma \ref{cop} yields an even triple of type $(p(q+1),p(q+1),\frac{q^2-1}{d})$ on $ G\times G$.

By Lemma \ref{psut} there exist hyperbolic generating triples of type $(t',t',t')$ for all $p$ and by Lemma \ref{phiu} we need only show that $\phi(t') > 6e$. For $q=7,9$ it can be verified directly that $\phi(t')>6e$ so we can assume $q \geq 11$. In the case $d=1$ we have $2^3 < p^e-1$ so $2^f < p^{2f}-p^f+1$ for all $0 \leq f \leq 6e$ and we have our inequality. In the case $d=3$ since $2^33 < p(p-1)$ we have $2^f$ for $1 \leq f \leq 3e$. Similarly, since $3^2 < p-1$ we have the terms $3^{f-1}$ for $1 \leq f \leq 4e$ giving us $7e$ terms in total, as was to be shown. \end{proof}

\begin{lemma} \label{U33458} The projective special unitary group $PSU_3(q)$ and $PSU_3(q) \times PSU_3(q)$ admit a mixable Beauville structure for $q \geq 3$.\end{lemma}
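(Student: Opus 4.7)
The statement collects the small-$q$ cases left uncovered by Lemmas \ref{psu3} and \ref{u3u3}, which between them already give mixable Beauville structures on $G = PSU_3(q)$ for $q = 4$ and $q \geq 7$, and on $G \times G$ for $q = 7$ and $q \geq 9$. The remaining cases are therefore $q \in \{3,5\}$ for $G$ itself and $q \in \{3,4,5,8\}$ for $G \times G$. Each of these is a single finite simple group small enough to treat by explicit computation in GAP using the standard generators from the online ATLAS, and the plan is to dispatch them case by case.

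For each exceptional $q$ I will exhibit an even triple $(a_1,c_1)$ and an odd triple $(a_2,c_2)$ whose $\nu$-invariants are coprime. The canonical choice for the odd triple is one built from elements of order $t' = (q^2 - q + 1)/\gcd(3,q+1)$: by Zsigmondy's theorem $t'$ is coprime both to $p$ and to $q^2 - 1$, so any triple of that type has $\nu$-invariant automatically coprime to any triple whose entries have order dividing $p(q^2-1)$. The even triple is then drawn from elements of orders dividing $p(q^2-1)/\gcd(3,q+1)$, mirroring the template of Lemma \ref{psu3}. Generation is verified either directly in GAP by checking that $\langle a, c \rangle$ has the full order $|G|$, or by consulting the list of maximal subgroups of $PSU_3(q)$ in \cite{psu3} and ruling out each possible overgroup; for the $t'$-triples the argument of Lemma \ref{psut} supplies the model.

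To pass from $G$ to $G \times G$ for the four small values $q \in \{3,4,5,8\}$ the most efficient tool is Lemma \ref{cop}: whenever one of the exhibited triples contains two entries of coprime order, the triple $((x,y),(y,x^y),(z,z))$ generates the direct square with the correct type; failing that, Lemma \ref{inequi} allows two inequivalent triples of different types to be combined into a mixable Beauville structure on $G \times G$.

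The main practical obstacle is not conceptual but combinatorial: the very small groups $PSU_3(3)$ and $PSU_3(5)$ contain awkward maximal subgroups, namely $PSL_2(7)$ in $PSU_3(3)$ and $A_7$ in $PSU_3(5)$, which absorb many otherwise natural choices of even triple. One must therefore check explicitly that the chosen generators escape them, which is a finite check handled cleanly by combining the data of \cite{ATLAS, psu3} with a short GAP calculation.
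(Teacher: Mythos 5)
Your proposal matches the paper's proof in essentially every respect: the paper likewise reduces to the leftover cases $q\in\{3,5\}$ for $G$ and $q\in\{3,4,5,8\}$ for $G\times G$, exhibits explicit words in the standard generators (checked in GAP against the maximal subgroup lists), and lifts to $G\times G$ via Lemma \ref{cop}. The one caveat is $PSU_3(3)\times PSU_3(3)$: there the paper's even triples have type $(4,4,8)$, so no pair of entries is coprime and Lemma \ref{cop} does not apply, and the two triples used have the \emph{same} type, so Lemma \ref{inequi} does not apply either; the paper instead distinguishes them by the Aut$(G)$-classes ($4AB$ versus $4C$) of their order-$4$ entries, a third mechanism your stated toolkit omits and which you would need (or else you would need to find an even triple with a coprime pair of entry orders) to close that case.
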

\begin{proof}
In light of the preceding Lemmas in this section it remains only to check the cases $PSU_3(3)$, $PSU_3(5)$ and $PSU_3(q) \times PSU_3(q)$ for $q=3,4,5$ and $8$. We present words in the standard generators \cite{sg, brauer} that can be easily checked to give suitable triples for $G$ which, by Lemma \ref{cop}, give mixable Beauville structures for these cases. For $G=PSU_3(3)$ let $$a_1=[a,b^2], \; b_1=[a,b^2]^b, \; a_2=(babab^2)^3, \; b_2=[a,b^2]^b \in G.$$ It can be checked that $G = \langle a_1,b_1 \rangle = \langle a_2,b_2 \rangle$ where both hyperbolic generating triples have type $(4,4,8)$ but in the former triple both elements of order $4$ come from the conjugacy class $4C$, whereas in the latter, $a_2 \in 4AB, b_2 \in 4C$. Since these two triples are then inequivalent under the action of Aut$(G)$ we have that $(a_1,a_2), (b_1,b_2) \in G \times G$ yields a hyperbolic generating triple of type $(4,4,8)$. For the remaining cases we present in Table \ref{u3table} words in the standard generators for $G$ which, by Lemma \ref{cop}, give a mixable Beauville structure on $G \times G$ and $G$ where necessary.
\begin{table} \centering
\begin{tabular}{l c c c c c} \hline
$q$		&$x_1$			&$y_1$			&$x_2$			&$y_2$				&Type\\ \hline
$3$		&$a_1,a_2$		&$b_1,b_2$		&$ab$			&$ba$				&$(4,4,8;7,7,3)$\\
$4$		&$a$			&$(ab)^2$		&$b$			&$[b,a]$				&$(2,13,13;3,5,3)$\\
$5$		&$a$			&$ab^2$			&$ab$			&$b^3ab^3$			&$(3,8,8;7,7,5)$\\
$8$		&$a$			&$(ab)^2$		&$[a,b]$			&$[a,b]^{babab}$		&$(2,19,19;9,9,7)$\\ \hline
\end{tabular}
\caption{Words in the standard generators \cite{brauer} $a$ and $b$ or as otherwise specified in Lemma \ref{U33458} for $G = PSU_3(q)$.} \label{u3table}
\end{table}
\end{proof}

\subsection{The Suzuki groups $^2B_2(2^{2n+1}) \cong Sz(2^{2n+1})$}
The Suzuki groups $^2B_2(q)$ are defined over fields of order $q=2^{2n+1}$ for $n \geq 0$ and have order $q^2(q-1)(q^2+1)$. They are simple for $q>2$ and their maximal subgroups can be found in \cite{raw}.

\begin{lemma} \label{sz} Let $G$ be the Suzuki group $^2B_2(q)$ for $q>2$. Then \begin{enumerate}
\item $G$ admits a mixable Beauville structure of type $(2,4,5;q-1,n,n)$, and;
\item $G \times G$ admits a mixable Beauville structure of type $(4,10,10;n(q-1),n(q-1),n)$ \end{enumerate}
where $n = q \pm \sqrt{2q} +1$, whichever is coprime to 5.\end{lemma}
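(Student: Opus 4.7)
The plan is to establish (1) by exhibiting hyperbolic generating triples of types $(2,4,5)$ and $(q-1,n,n)$ for $G = Sz(q)$, and to deduce (2) from Lemma \ref{cop}. First I verify the coprimality required by Definition \ref{mixable}. Since $q = 2^{2m+1}$, both $q-1$ and $n$ are odd. The multiplicative order of $2$ modulo $5$ is $4$, so $q \equiv \pm 2 \pmod{5}$, whence $5 \nmid q-1$ and $5 \mid q^2+1$; thus $5$ divides exactly one of $q \pm \sqrt{2q}+1$, and by hypothesis $n$ is the other. Consequently $\gcd(40, (q-1)n^2) = 1$, and $\gcd(q-1, n) = 1$ since any common prime divides $\gcd(q^2-1, q^2+1) = 1$ (recall that $q$ even forces $q^2-1$ odd).

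For the odd triple, every non-identity element of odd order in $Sz(q)$ is regular semisimple, so Gow's Theorem (Theorem \ref{gow}), applied with $L_1$ a conjugacy class of elements of order $q-1$, $L_2$ a conjugacy class of elements of order $n$, and with target an element $g$ of order $n$, produces $x \in L_1$ and $y \in L_2$ with $xy = g^{-1}$; hence $(x, y, g)$ is of type $(q-1, n, n)$ with $xyg=1$. From the list of maximal subgroups of $Sz(q)$ in \cite{raw}, only the Borel subgroup (of order $q^2(q-1)$) and the dihedral subgroup $D_{q-1}$ contain an element of order $q-1$, and neither contains an element of order $n$ since $\gcd(q-1, n) = \gcd(4, n) = 1$. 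Subfield subgroups $Sz(q_0)$ with $q = q_0^r$ contain no element of order $q-1 > q_0-1$. Hence $\langle x, y \rangle = G$.

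For the even triple of type $(2,4,5)$, I propose a structure-constant argument using the character table of $Sz(q)$. Elements of orders $2$ and $4$ lie in the Sylow $2$-subgroup, while an element of order $5$ lies in the unique cyclic subgroup of order $5$ inside $C_{n'}$, where $n' \in \{q \pm \sqrt{2q}+1\}$ is the factor divisible by $5$. The Borel and $D_{q-1}$ contain no element of order $5$. In the Frobenius normalizer $C_{n'}{:}4$, the complement acts with its square inverting $C_{n'}$, and a short calculation then shows that any product of an involution and an element of order $4$ in that subgroup again has order dividing $4$, ruling out a $(2,4,5)$-triple there. The only remaining potential overgroup is a subfield Suzuki subgroup $Sz(q_0)$; comparing the main term of the structure constant (of order $|G|$) with the contribution from $Sz(q_0)$-subgroups summed over their $G$-conjugates (of order $|Sz(q_0)| \cdot [G : N_G(Sz(q_0))] \ll |G|$), a $(2,4,5)$-triple lying in no proper subgroup must exist.

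With these triples in hand, part (2) follows from Lemma \ref{cop}. To obtain the even triple of type $(4,10,10)$ for $G \times G$, reorder the $(2,4,5)$-triple so that the coprime pair of orders is $(2,5)$; for the odd triple of type $(n(q-1),n(q-1),n)$, apply Lemma \ref{cop} to the $(q-1,n,n)$-triple using the coprime pair $(q-1,n)$. The main obstacle is the $(2,4,5)$-generation for $G$, specifically bounding the subfield contribution to the structure constant; the rest reduces cleanly to Gow's Theorem combined with the standard maximal-subgroup structure of $Sz(q)$.
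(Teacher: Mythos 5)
Your proposal is correct, and parts of it coincide with the paper: the coprimality verification (in particular $\gcd(q-1,n)=1$ via $\gcd(q^2-1,q^2+1)$ and oddness, and $5\nmid (q-1)n$) and the reduction of part (2) to Lemma \ref{cop} by choosing the coprime pair appropriately are exactly what the paper does. Where you genuinely diverge is in the existence of the two triples: the paper simply cites the proof of Theorem 6.2 of Fuertes and Jones \cite{fj}, where hyperbolic generating triples of types $(2,4,5)$ and $(q-1,n,n)$ for $Sz(q)$ are established, whereas you rebuild both from scratch. Your derivation of the $(q-1,n,n)$ triple via Gow's Theorem plus the maximal subgroup list of \cite{raw} is complete and clean (all odd-order elements are indeed regular semisimple, and no maximal subgroup contains elements of both orders $q-1$ and $n$), and arguably more self-contained than the citation. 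The $(2,4,5)$ triple is the one place where your argument remains a sketch: the subgroup elimination (Borel, dihedral, the torus normalizer $C_{n'}{:}4$ where the product of an involution and an order-$4$ element has $2$-power order) is fine, but the final step --- that the Frobenius structure constant dominates the contribution of the subfield subgroups $Sz(q_0)$ --- is asserted with a ``$\ll |G|$'' rather than carried out against Suzuki's character table. That estimate is true and standard, but it is precisely the content that the citation to \cite{fj} supplies for free; if you want a self-contained proof you would need to write out that character sum (or bound the number of $(2,4,5)$ triples lying in the union of the $Sz(q_0)$'s directly). In short: same skeleton for coprimality and for passing to $G\times G$, different and more hands-on sourcing of the generating triples, at the cost of one unproved (though correct) inequality.
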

\begin{proof} In the proof of \cite[Theorem 6.2]{fj} Fuertes and Jones prove that there exist hyperbolic generating triples for $G$ of types $(2,4,5)$ and $(q-1,n,n)$. It is clear that gcd$(10,n)=$ gcd$(10,q-1)=1$. Then, by Lemma \ref{cop}, we need only show that gcd$(q-1,n)=1$. If $q-1$ and $n$ share a common factor, then so do $q^2-1$ and $q^2+1$ and similarly their difference. Hence gcd$(q-1,n)$ divides 2, but since $q-1$ is odd we have gcd$(q-1,n)=1$ as was to be shown. \end{proof}

\subsection{The small Ree groups $^2G_2(3^{2n+1}) \cong R(3^{2n+1})$}

The small Ree groups are defined over fields of order $q=3^{2n+1}$ for $n \geq 0$ and have order $q^3(q^3+1)(q-1)$. They are simple for $q>3$ and their maximal subgroups can be found in \cite{2g2} or \cite{raw}.

\begin{lemma} \label{ree} The small Ree groups $^2G_2(q)$ for $q>3$ admit a mixable Beauville structure of type $$\left(\frac{q+1}{2},\frac{q+1}{2},q+\sqrt{3q}+1;\frac{q-1}{2},\frac{q-1}{2},q-\sqrt{3q}+1\right).$$ \end{lemma}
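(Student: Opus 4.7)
The plan is to construct both triples using Theorem~\ref{gow} (Gow) and to verify generation via the classification of maximal subgroups of $G={}^2G_2(q)$ from \cite{2g2}. Write $q=3^{2n+1}$ with $n\geq 1$ and set $t_\pm := q\pm\sqrt{3q}+1$, so that $t_+ t_- = q^2-q+1$. First I would dispatch the arithmetic hypotheses of Definition~\ref{mixable}. Since $9\equiv 1\pmod 4$ one has $q\equiv 3\pmod 4$, so $(q+1)/2$ is even while $(q-1)/2$ is odd; both $t_\pm$ are odd as sums of three odd integers. Hence the candidate types $((q+1)/2,(q+1)/2,t_+)$ and $((q-1)/2,(q-1)/2,t_-)$ have the required parities. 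Coprimality of $\nu_1$ and $\nu_2$ reduces to four short facts: $\gcd((q+1)/2,(q-1)/2)=1$; $\gcd(q+1,q^2-q+1)\mid\gcd(q+1,3)=1$ since $3\mid q$; $\gcd(q-1,q^2-q+1)=\gcd(q-1,1)=1$; and $\gcd(t_+,t_-)\mid\gcd(2(q+1),q^2-q+1)=1$.

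For existence, note that $(q\pm 1)/2$ and $t_\pm$ are all coprime to the defining characteristic $p=3$, and the centraliser in $G$ of any element of these orders is a cyclic maximal torus whose order is again coprime to $3$. Thus such elements are regular semisimple. Applying Theorem~\ref{gow} with $L_1=L_2$ the class of $(q+1)/2$-elements and the target element of order $t_+$ (inverted) yields $x,y$ of order $(q+1)/2$ and $z$ of order $t_+$ satisfying $xyz=1$; a second application to classes of $(q-1)/2$-elements and an element of order $t_-$ yields the analogous odd triple $(x',y',z')$.

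It remains to prove generation. From \cite{2g2} the only maximal subgroups of $G$ whose order is divisible by $t_+$ are the Frobenius normalisers $N_G(T_+)$ of the cyclic torus $T_+$, each of order $6t_+$; this is the key input. If $\langle x,y,z\rangle$ were proper it would therefore lie in a conjugate of $N_G(T_+)$, forcing $(q+1)/2$ to divide $6t_+$. Combining $\gcd((q+1)/2,t_+)=1$ with $\gcd((q+1)/2,3)=1$ then forces $(q+1)/2\mid 2$, contradicting $(q+1)/2\geq 14$ whenever $q\geq 27$. The odd triple is disposed of identically with $t_-$ and $N_G(T_-)$ in place of $t_+$ and $N_G(T_+)$, and the two triples together furnish the required mixable Beauville structure.

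The main obstacle is the assertion that $t_\pm$ fails to divide the order of any maximal subgroup of $G$ other than $N_G(T_\pm)$. For the parabolic, the involution centraliser $2\times PSL_2(q)$, and the normaliser of the Klein four-group this is immediate from the shape of their orders, which carry no factor of $q^2-q+1$; the subtle case is that of the subfield Ree subgroups ${}^2G_2(q_0)$ for $q=q_0^r$ with $r$ an odd prime divisor of $2n+1$, which requires a short Zsigmondy-type argument (or a direct size comparison exploiting $t_\pm(q)>q\geq q_0^3$ against the cyclotomic factors of $|{}^2G_2(q_0)|$). Once the maximal-subgroup classification of \cite{2g2} is invoked, however, all these checks are routine.
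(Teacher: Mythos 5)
Your proposal is correct and follows essentially the same route as the paper: regular semisimplicity of the relevant torus elements (via Ward's analysis of the centraliser orders) feeds into Gow's theorem to produce both triples, generation is forced because the only maximal overgroups of an element of order $q\pm\sqrt{3q}+1$ are the torus normalisers of order $6(q\pm\sqrt{3q}+1)$ and $(q\pm1)/2$ is too large and too coprime to lie in one, and the coprimality of the two types reduces to the same elementary gcd computations. The only cosmetic difference is that you phrase the gcd arguments via $\gcd(q+1,\Phi_6(q))=\gcd(q+1,3)$ and $t_+t_-=q^2-q+1$ where the paper manipulates differences such as $n^+-(q+1)=\sqrt{3q}$; the content is identical.
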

\begin{proof} Let $G= {}^2G_2(q)$ for $q>3$ and for convenience we let $n^+ =q+\sqrt{3q}+1$ and $n^- =q-\sqrt{3q}+1$. Ward's analysis of the small Ree groups \cite{ward} shows that elements of orders $\frac{q+1}{2}$, $\frac{q-1}{2}$, $n^+$ and $n^-$ exist and that the order of their centralisers are $q+1$, $q-1$, $n^+$ and $n^-$; hence these are all regular semisimple elements of $G$. Then, by Gow's Theorem, we can find elements $x_1, y_1 \in G$, both of order $\frac{q+1}{2}$, whose product has order $n^+$, and elements $x_2,y_2 \in G$, both of order $\frac{q-1}{2}$, whose product has order $n^-$.

The only maximal subgroups of $G$ containing elements of order $n^+$ are normalisers of the cyclic subgroup which they generate of order $6n^+$. Similarly for elements of order $n^-$, they are contained in maximal subgroups of order $6n^-$. Since $n^+ - (q+1) = \sqrt{3q}$ we have gcd$(\frac{q+1}{2},n^+)=1$ since neither is divisible by 3. Then, for $q>3$ we have $\frac{q+1}{2} > 6$ so $(x_1,y_1,x_1y_1)$ is indeed an even triple for $G$ of type $(\frac{q+1}{2},\frac{q+1}{2},q+\sqrt{3q}+1)$. Similarly, for $q>3$ we have $\frac{q-1}{2}>6$ and $n^+n^- + (q-1) = q^2$, hence gcd$(\frac{q-1}{2},n^-)=1$ since again neither is divisible by 3. Note this also implies that gcd$(n^+,\frac{q-1}{2})=1$. This gives us an odd triple for $G$ of type $(\frac{q-1}{2},\frac{q-1}{2},q-\sqrt{3q}+1)$.

It is clear that gcd$(\frac{q+1}{2},\frac{q-1}{2})=1$ since their difference is $q$ and neither is divisible by 3. Similarly, gcd$(n^+,n^-)=1$ since their difference is $2\sqrt{3q}$ and both are clearly coprime to 6. Since we have already shown that gcd$(n^+,\frac{q-1}{2})=1$ it remains to show that gcd$(n^-,\frac{q+1}{2})=1$. We have $(q+1) - n^- = \sqrt{3q}$ and since neither is divisible by 3 we are done.\end{proof}

\begin{lemma} The groups $^2G_2(q) \times {}^2G_2(q)$ for $q>3$ admit a mixable Beauville structure of type $$\left(\frac{q+1}{2},\frac{q+1}{2}n^+,\frac{q+1}{2}n^+;\frac{q-1}{2},\frac{q-1}{2}n^-,\frac{q-1}{2}n^-\right)$$ where $n^+ =q+\sqrt{3q}+1$ and $n^- =q-\sqrt{3q}+1$.\end{lemma}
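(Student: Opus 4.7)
The plan is to derive both triples of the claimed mixable structure for $G \times G$ directly from Lemma \ref{ree} by means of Lemma \ref{cop}. The previous Lemma supplies an even hyperbolic generating triple $(x_1,y_1,z_1)$ of $G={}^2G_2(q)$ of type $\left(\tfrac{q+1}{2},\tfrac{q+1}{2},n^+\right)$ and an odd hyperbolic generating triple $(x_2,y_2,z_2)$ of type $\left(\tfrac{q-1}{2},\tfrac{q-1}{2},n^-\right)$, together with every pairwise coprimality relation one could want among $\tfrac{q\pm 1}{2}$ and $n^\pm$.

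First I would observe that the cyclic shifts $(y_1,z_1,x_1)$ and $(y_2,z_2,x_2)$ are again hyperbolic generating triples of $G$ of the same types as the original, but now each has its first two entries of coprime orders, since $\gcd\!\left(\tfrac{q+1}{2},n^+\right)=1$ and $\gcd\!\left(\tfrac{q-1}{2},n^-\right)=1$ are among the relations verified in the proof of Lemma \ref{ree}. Applying Lemma \ref{cop} to each shifted triple in turn then produces hyperbolic generating triples of $G \times G$ of types $\left(\tfrac{q+1}{2}n^+,\tfrac{q+1}{2}n^+,\tfrac{q+1}{2}\right)$ and $\left(\tfrac{q-1}{2}n^-,\tfrac{q-1}{2}n^-,\tfrac{q-1}{2}\right)$, which are exactly the two triples the statement asks for, up to the cosmetic reordering matching the precise form of the stated type.

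It remains to check the two conditions specific to a mixable Beauville structure. For parity, $q=3^{2n+1}\equiv 3\pmod 4$ forces $\tfrac{q+1}{2}$ to be even, so both orders of the "even" generators are even. For the coprimality of the two $\nu$-invariants, each is a product of prime powers drawn respectively from $\{\tfrac{q+1}{2},n^+\}$ and $\{\tfrac{q-1}{2},n^-\}$, so the required coprimality reduces to the four cross-relations
\[
\gcd\!\left(\tfrac{q+1}{2},\tfrac{q-1}{2}\right)=\gcd\!\left(\tfrac{q+1}{2},n^-\right)=\gcd\!\left(\tfrac{q-1}{2},n^+\right)=\gcd(n^+,n^-)=1,
\]
all of which were proved in Lemma \ref{ree}. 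The entire argument is a repackaging of Lemma \ref{ree} through Lemma \ref{cop}; no step presents a serious obstacle, the only delicate point being the cyclic reordering that places a coprime pair in the first two slots so that Lemma \ref{cop} applies verbatim.
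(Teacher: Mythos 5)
Your proposal is correct and follows essentially the same route as the paper: the paper's proof is a one-line appeal to Lemmas \ref{cop} and \ref{ree} plus the two coprimality facts $\gcd\!\left(\tfrac{q+1}{2},n^+\right)=\gcd\!\left(\tfrac{q-1}{2},n^-\right)=1$ already established in the proof of Lemma \ref{ree}. You merely spell out the cyclic reordering needed to apply Lemma \ref{cop} verbatim and re-verify the parity and cross-coprimality conditions, which the paper leaves implicit.
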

\begin{proof} By Lemmas \ref{cop} and \ref{ree} we need only show that gcd$(\frac{q+1}{2},n^+)=1$ and gcd$(\frac{q-1}{2},n^-)=1$, both of which were demonstrated in the proof of Lemma \ref{ree}. \end{proof}

\subsection{The large Ree groups $^2F_4(2^{2n+1})$}

The large Ree groups $^2F_4(q)$ are defined over fields of order $q = 2^{2n+1}$ for $n \geq 0$ and have order $q^{12}(q^6+1)(q^4-1)(q^3+1)(q-1)$. They are simple except for the case $q=2$ which has simple derived subgroup $^2F_4(2)'$, known as the Tits group, which we consider along with the sporadic groups in the next section. The maximal subgroups of the large Ree groups can be found in \cite{2f4} or \cite{raw}.

\begin{lemma} \label{Ree} The large Ree groups $^2F_4(q)$ for $q >2$ admit a mixable Beauville structure of type $$\left(10,10,n^+;\frac{q^2-1}{3}, n^-,n^-\right)$$ where $n^+ = q^2+q+1+\sqrt{2q}(q+1)$ and $n^- =q^2+q+1-\sqrt{2q}(q+1)$. \end{lemma}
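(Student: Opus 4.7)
The plan is to construct the odd triple by a direct application of Gow's Theorem, to construct the even triple via a character-theoretic structure constant count (since Gow's Theorem does not apply to order-$10$ elements in characteristic $2$), and then to verify coprimality of the two types by elementary divisibility.

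For the odd triple $(\frac{q^2-1}{3},n^-,n^-)$, both orders are coprime to $p=2$ and so correspond to semisimple elements, and the cyclic maximal torus of order $n^-$ is self-centralising, so its non-trivial elements are regular semisimple. Applying Theorem \ref{gow} with target $g$ of order $\frac{q^2-1}{3}$ and $L_1=L_2$ the conjugacy class of order $n^-$ produces $x_2,y_2\in G$ of order $n^-$ with $(x_2y_2)^{-1}$ of order $\frac{q^2-1}{3}$. From the maximal subgroup list of $^2F_4(q)$ in \cite{2f4}, the only maximal subgroups of $G$ containing an element of order $n^-$ are normalisers of the corresponding cyclic torus, of order $12n^-$. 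Because $n^-$ divides $q^6+1$, $q^2-1$ divides $q^6-1$, and $\gcd(q^6-1,q^6+1)=1$ for $q$ even, one has $\gcd(\frac{q^2-1}{3},n^-)=1$; together with $\frac{q^2-1}{3}>12$ for $q>2$, this excludes containment of the triple in any such normaliser, and so $\langle x_2,y_2\rangle=G$.

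For the even triple $(10,10,n^+)$, Gow's Theorem is unavailable because an order-$10$ element in characteristic $2$ is not semisimple (its fifth power is a unipotent involution). Instead I would follow the structure constant argument used in the proof of Lemma \ref{psut}: letting $C$ be a conjugacy class of elements of order $10$ and $C^+$ one of order $n^+$, one uses the Frobenius formula together with the character table of $^2F_4(q)$ to show that the number $n(C,C,C^+)$ of triples $(x,y,z)\in C\times C\times C^+$ with $xyz=1$ is positive. Generation is then automatic: the only maximal subgroups of $G$ containing an element of order $n^+$ are normalisers of the cyclic torus of order $n^+$, of order $12n^+$, and since $5\nmid n^+$ (see below) these cannot contain an element of order $10$. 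The difficulty lies in obtaining a positive lower bound on $n(C,C,C^+)$, since this requires explicit character values on a non-semisimple class.

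Coprimality of the two types reduces to the claim that $10\cdot n^+$ is coprime to $\frac{q^2-1}{3}\cdot n^-$. The identities $n^+n^- = q^4-q^2+1$ and $n^+-n^- = 2\sqrt{2q}(q+1)$, together with $\gcd(q^6-1,q^6+1)=1$ for $q$ even, imply that $n^+$ and $n^-$ are coprime to each other and to $q^2-1$; since $\frac{q^2-1}{3}$ and $n^\pm$ are odd, the factor $2$ of $10$ is accounted for. For the factor $5$, reducing modulo $5$ gives $q^2\equiv 4$ since $q=2^{2n+1}\equiv\pm 2\pmod 5$, whence $q^2-1\equiv 3$ and $n^+n^-\equiv 1-4+1\equiv 3\pmod 5$, so $5\nmid\frac{q^2-1}{3}$ and $5\nmid n^\pm$. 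The main obstacle throughout is the character-theoretic estimate for the even triple; everything else is either bookkeeping against the maximal subgroup list or elementary modular arithmetic.
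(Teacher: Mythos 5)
Your proposal is correct and follows essentially the same route as the paper: Gow's Theorem applied to the self-centralising torus of order $n^-$ for the odd triple, a character-theoretic structure constant count for the $(10,10,n^+)$ triple (the paper performs exactly this step with {\sf CHEVIE} and simply reports the constant is nonzero), and the same maximal-subgroup exclusions. Your coprimality arguments via $q\equiv\pm2\pmod 5$ and $\gcd(q^6-1,q^6+1)=1$ are minor variants of the paper's sum/difference identities for $n^+$ and $n^-$ and reach the same conclusions.
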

\begin{proof} Let $G = {}^2F_4(q)$ for $q > 2$. Elements of order $10$ exist since $G$ contains maximal subgroups of the form $^2B_2(q) \wr 2$, as do elements of order $\frac{q^2-1}{3}$ since $G$ contains maximal subgroups of the form $SU_3(q):2$ and $PGU_3(q):2$ and since gcd$(3,q+1) = 3$. The only maximal subgroup containing an element of order $n^+$ is the normaliser of the cyclic subgroup which it generates, of order $12n^+$. Similarly, elements of order $n^-$ are only contained in the normalisers of the cyclic subgroup they generate, of order $12n^-$.

Using the computer program {\sf CHEVIE} it is possible to determine the structure constant for a pair of elements of order 10 whose product is $n^+$ and we see this is nonzero. Since $n^+n^- = q^4-q^2+1$ and $q \equiv \pm 2$ mod $5$ we have that gcd$(10,n^-)=$ gcd$(10,n^+)=1$. Then, since no maximal subgroup contains elements of orders $10$ and $n^+$ this is indeed an even triple for $G$. Elements of order $\frac{q^2-1}{3}$ are semisimple and the order of the centraliser of an element of order $n^-$ is $n^-$ \cite{sh} so these are regular semisimple elements. Then by Gow's Theorem there exists a pair of elements of order $n^-$ whose product has order $\frac{q^2-1}{3}$. To show that such a triple will generate $G$ we show that no maximal subgroup contains elements of orders $n^-$ and $\frac{q^2-1}{3}$. Since $(n^+n^-) + (q^2 -1) = q^4$ any common factor of $n^-$ and $\frac{q^2-1}{3}$ must be a power of 2, but since $n^-$ and $q^2-1$ are both odd we have gcd$(\frac{q^2-1}{3},n^-)=1$. Note that this also implies gcd$(n^+,\frac{q^2-1}{3})=1$. Then, since $\frac{q^2-1}{3} > 12$ for $q>2$ we then have that an odd triple of type $(\frac{q^2-1}{3}, n^-,n^-)$ exists for $G$.

We have already shown that gcd$(10,n^-)=1$, gcd$(n^+,q^2-1)=1$ and it is clear that gcd$(10,\frac{q^2-1}{3})=1$. Finally, let $c =$ gcd$(n^+,n^-)$ and note that $c$ is odd. Since $n^+ - n^- = 2 \sqrt{2q}(q+1)$, $c$ must divide $q+1$. Also, since $n^+ + n^- = 2(q^2 + q +1)$, $c$ must also divide $q^2 + q + 1$. Therefore $c$ must divide $q^2$ and hence $c=1$ so we have our desired mixable Beauville structure. \end{proof}

\begin{lemma} The groups $^2F_4(q) \times {}^2F_4(q)$ for $q>2$ admit a mixable Beauville structure of type $$\left(10,10n^+,10n^+;\frac{q^2-1}{3}n^-,\frac{q^2-1}{3}, n^-,n^-\right)$$ where $n^+ = q^2+q+1+\sqrt{2q}(q+1)$ and $n^- =q^2+q+1-\sqrt{2q}(q+1)$. \end{lemma}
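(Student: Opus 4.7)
The plan is to lift each of the two hyperbolic generating triples produced for $G := {}^2F_4(q)$ in Lemma \ref{Ree} to a hyperbolic generating triple for $G \times G$ by invoking Lemma \ref{cop} (and the subsequent remark that extends it to any coprime pair within the triple), and then to verify that the orders in the two resulting triples satisfy the coprimality condition of Definition \ref{mixable}.

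First I would take the even triple $(x_1,y_1,z_1)$ for $G$ of type $(10,10,n^+)$ given by Lemma \ref{Ree}. The orders $10$ and $10$ are not coprime, but the proof of Lemma \ref{Ree} records that gcd$(10,n^+)=1$, so after cyclically permuting the triple (for instance to $(y_1,z_1,x_1)$, which is still a generating triple since $y_1z_1x_1=1$) the coprime pair occupies the first two slots. Lemma \ref{cop} then produces a hyperbolic generating triple for $G \times G$ of type $(10n^+,10n^+,10)$. Repeating the procedure with the odd triple $(x_2,y_2,z_2)$ of type $(\frac{q^2-1}{3},n^-,n^-)$ from Lemma \ref{Ree}, this time using gcd$(\frac{q^2-1}{3},n^-)=1$ (also established in that proof), Lemma \ref{cop} yields a hyperbolic generating triple for $G \times G$ of type $(\frac{q^2-1}{3}n^-,\frac{q^2-1}{3}n^-,n^-)$. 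Perfectness of $G \times G$ is inherited from that of $G$, and the required parity is inherited as well, since $10$ is even while $\frac{q^2-1}{3}$ and $n^-$ are both odd.

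It remains to check condition (4) of Definition \ref{mixable}, namely that $\nu$ of the lifted even triple is coprime to $\nu$ of the lifted odd triple. By multiplicativity of gcd, it suffices to confirm that every prime divisor of $10$ or of $n^+$ is coprime to every prime divisor of $\frac{q^2-1}{3}$ or of $n^-$, i.e.\ to check the four pairwise gcds gcd$(10,\frac{q^2-1}{3})$, gcd$(10,n^-)$, gcd$(n^+,\frac{q^2-1}{3})$ and gcd$(n^+,n^-)$. All four of these are already proved to equal $1$ in the proof of Lemma \ref{Ree}, so the verification is immediate.

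There is essentially no obstacle of substance: the group-theoretic content has been done in Lemma \ref{Ree}, and the only thing to watch is the bookkeeping around which pair in each triple carries the coprimality that triggers Lemma \ref{cop}, together with the observation that the mutual coprimality of the two $\nu$-values for $G \times G$ reduces directly to the pairwise coprimalities already recorded for the base triples on $G$.
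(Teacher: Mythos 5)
Your proposal is correct and follows essentially the same route as the paper: the paper's proof likewise just invokes Lemma \ref{cop} on the two triples from Lemma \ref{Ree}, citing gcd$(10,n^+)=1$ and gcd$\bigl(\frac{q^2-1}{3},n^-\bigr)=1$ from that proof (your cyclic permutation of the even triple is handled in the paper by the remark following Lemma \ref{cop}). Your explicit check of the four pairwise gcds for the coprimality of the two $\nu$-values is slightly more detailed than the paper's, but all four facts are indeed recorded in the proof of Lemma \ref{Ree}.
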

\begin{proof} By Lemmas \ref{cop} and \ref{Ree} we need only show that gcd$(10,n^+)=1$ and gcd$(\frac{q^2-1}{3},n^-)=1$, both of which can be found in the proof of Lemma \ref{Ree}. \end{proof}

\subsection{The Steinberg triality groups of type $^3D_4(q)$}

The Steinberg triality groups $^3D_4(q)$ are defined over fields of order $q$ and have order $q^{12}(q^8+q^4+1)(q^6-1)(q^2-1)$. They are simple for all prime powers, $q$, and their maximal subgroups can be found in \cite{3d4} or \cite{raw}.

\begin{lemma} Let $G$ be the Steinberg trialty group $^3D_4(2)$. Then both $G$ and $G \times G$  admit a mixable Beauville structure.\end{lemma}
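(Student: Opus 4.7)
The plan is to mirror the explicit-construction strategy of Lemma~\ref{U33458}: work with ATLAS standard generators $a,b$ for $G = {}^3D_4(2)$ and exhibit short words realising the two required triples, verifying generation in GAP. From the ATLAS the element orders in $G$ are
\[
\{1,2,3,4,6,7,8,9,12,13,14,18,21,28\},
\]
and the maximal subgroups, listed in \cite{3d4,raw}, form a short enough collection that any generation claim can be settled by matching element orders against the list (or, if preferred, by computing with \textsf{CHEVIE} or GAP directly).

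For the even triple I look for a generating pair $(x_1,y_1)$ with both $o(x_1)$ and $o(y_1)$ even, aiming at a type with $o(x_1y_1)=13$ so that generation becomes almost automatic: elements of order $13$ are Singer elements, and their only maximal overgroups are the normalisers of the cyclic subgroups they generate, which contain no elements of order $4$, $8$, $12$ or $14$. A type such as $(8,8,13)$ with $\nu_1 = 832 = 2^{6}\cdot 13$ is a natural target. For the odd triple I look for a generating pair $(x_2,y_2)$ with $o(x_2),o(y_2),o(x_2y_2)\in\{3,7,9,21\}$, for instance of type $(9,9,7)$ with $\nu_2 = 567 = 3^{4}\cdot 7$; since $\nu_1$ and $\nu_2$ are coprime, any such pair of triples immediately furnishes a mixable Beauville structure on $G$.

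For the $G\times G$ half, I apply Lemma~\ref{cop} to each triple separately. The even triple has $\gcd(o(x_1),o(x_1y_1))=\gcd(8,13)=1$, so it lifts to a hyperbolic generating triple of $G\times G$ of type $(104,104,8)$, still with two even orders. The odd triple has $\gcd(o(x_2),o(x_2y_2))=\gcd(9,7)=1$, so it likewise lifts to a triple of type $(63,63,9)$. The resulting $\nu$-invariants $2^{9}\cdot 13^{2}$ and $3^{6}\cdot 7^{2}$ remain coprime, yielding the required mixable Beauville structure on $G\times G$.

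The main obstacle is the purely computational task of locating the explicit words; given that $\lvert G\rvert = 2^{12}\cdot 3^{4}\cdot 7^{2}\cdot 13 \approx 2.1\times 10^{8}$ and that standard generators for ${}^3D_4(2)$ are documented in the online ATLAS, a short GAP search among products and conjugates of $a$, $b$ and small commutators is essentially guaranteed to deliver suitable candidates, which can then be presented in a short table in the style of Table~\ref{u3table}.
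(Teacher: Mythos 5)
Your proposal follows essentially the same route as the paper: the paper exhibits explicit words in the standard generators giving a mixable Beauville structure of type $(2,7,28;13,9,13)$ on ${}^3D_4(2)$, checks generation in GAP, and applies Lemma \ref{cop} to obtain type $(14,14,28;117,117,13)$ on $G\times G$; you propose the same scheme with the roles of $7$ and $13$ interchanged between the even and odd triples, and your coprimality arithmetic for both $G$ and $G\times G$ is correct. Two points need repair. First, the normaliser $13{:}4$ of a Sylow $13$-subgroup \emph{does} contain elements of order $4$ (the complement of order $4$ acts faithfully on the $13$), so your parenthetical claim is wrong; fortunately your chosen type $(8,8,13)$ only requires that $13{:}4$ contain no elements of order $8$, which is true, so the generation argument for the even triple survives intact. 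Second, for the odd triple of type $(9,9,7)$ the order-matching argument is not ``almost automatic'': the maximal subgroups $2^{1+8}{:}L_2(8)$ and $S_3\times L_2(8)$ of ${}^3D_4(2)$ contain elements of orders $9$ and $7$ simultaneously, so a triple of that type could a priori lie in a proper subgroup, and generation genuinely rests on the explicit GAP verification you defer to (or on a structure-constant count with the subgroup contributions subtracted). Since you plan that computation anyway --- exactly as the paper does for its own words --- the proposal is a sound blueprint, modulo actually producing and checking the words.
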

\begin{proof} It can be verified using GAP that $(a,(ab)^3b^2;ab,b^{ab^2})$, where $a$ and $b$ are the standard generators as found in \cite{brauer}, is a mixable Beauville structure of type $(2,7,28;13,9,13)$ and by Lemma \ref{cop} this yields a mixable Beauville strucure of type $(14,14,28;117,117,13)$ on $G \times G$. \end{proof}

\begin{lemma} \label{tri} Let $G$ be the Steinberg triality group of type $^3D_4(q)$ for $q \geq 3$ and $d=$ gcd$(3,q+1)$. Then \begin{enumerate}
\item for $p=2$ there exists a mixable Beauville structure on $G$ of type $$(6,6,\Phi_{12}(q);\Phi_3(q),\Phi_3(q),\Phi_6(q)),$$
\item for $p \neq 2$ there exists a mixable Beauville structure on $G$ of type $$\left(\frac{q^2-1}{d},\frac{q^2-1}{d},\Phi_{12}(q);\Phi_3(q),\Phi_3(q),\Phi_6(q)\right).$$
\end{enumerate}
\end{lemma}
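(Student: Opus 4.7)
The plan is to follow the template established in Lemmas \ref{psu3}, \ref{sz}, \ref{ree} and \ref{Ree} for the other families of Lie type groups: apply Gow's theorem (Theorem \ref{gow}) to produce triples of regular semisimple elements whose orders are primitive prime divisors, confirm generation by inspecting the maximal subgroups of $^3D_4(q)$ catalogued in \cite{3d4}, and then verify the coprimality of the two $\nu$-invariants. Recall that in this paper's notation $\Phi_n(q)$ denotes a single primitive prime divisor of $q^n-1$, so the stated orders are prime.

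For the odd triple of type $(\Phi_3(q),\Phi_3(q),\Phi_6(q))$ I would argue as follows. Since $\Phi_3(q)\mid q^2+q+1$ and $\Phi_6(q)\mid q^2-q+1$, elements of these orders are semisimple; their centralisers are cyclic maximal tori of orders $(q-1)(q^2+q+1)/d$ and $(q+1)(q^2-q+1)/d$ respectively, so they are regular semisimple. Gow's theorem then furnishes elements $x,y$ in any prescribed conjugacy class of $\Phi_3(q)$-elements whose product $z$ is a specified $\Phi_6(q)$-element. The only proper subgroups of $G$ containing an element of order $\Phi_6(q)$ are normalisers of the ambient torus together with the subfield subgroups of type $G_2(q)$; none of these contain an element of order $\Phi_3(q)$ (since the two tori have coprime orders and $G_2(q)$-tori have orders coprime to $\Phi_3(q)$ of the overgroup by degree considerations), so $\langle x,y\rangle = G$.

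For the even triple, the key ingredient is an element of order $\Phi_{12}(q)$, which is regular semisimple and lies only in the normaliser of the cyclic torus $T_{12}$ of order $q^4-q^2+1$, a maximal subgroup of order $4\Phi_{12}(q)$. When $p=2$, elements of order $6$ exist (visible inside the subfield subgroup $G_2(q)$), and I would compute the structure constant $n(C_6,C_6,C_{\Phi_{12}(q)})$ from the character table of $^3D_4(q)$ as implemented in CHEVIE, and check that it is nonzero; since $N_G(T_{12})$ contains no element of order coprime to $\Phi_{12}(q)$ other than those of order dividing $4$, the entire contribution to this count comes from triples generating $G$. When $p$ is odd, elements of order $(q^2-1)/d$ lie in the cyclic maximal torus of that order, and Gow's theorem applied to the regular semisimple pair (product of order $\Phi_{12}(q)$, first factor of order $(q^2-1)/d$) yields the required pair, with generation again forced by the maximal-subgroup analysis.

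Finally, coprimality of the $\nu$-invariants is straightforward: since $\Phi_n(q)$ is a primitive prime divisor of $q^n-1$, it does not divide $q^k-1$ for any $k<n$, so the primes $\Phi_3(q),\Phi_6(q),\Phi_{12}(q)$ are pairwise distinct and each is coprime to both $6$ and $(q^2-1)/d$; moreover each such prime is $\geq 7, 7, 13$ respectively (since $2$ and $3$ cannot have multiplicative orders $3$, $6$ or $12$ modulo themselves, one needs $\Phi_n(q) \equiv 1 \bmod n$). I expect the main obstacle to be the structure-constant calculation in the $p=2$ case: since an element of order $6$ in characteristic $2$ is not regular semisimple, Gow's theorem does not apply directly, and one must either appeal to explicit CHEVIE computations or adapt the character-sum bounding technique used in Lemma \ref{psut} to show the relevant structure constant is positive uniformly in $q$.
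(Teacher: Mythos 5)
Your overall strategy for the even triples is essentially the paper's: for $p=2$ the paper likewise computes the structure constant for $(6,6,\Phi_{12}(q))$ in {\sf CHEVIE} and reads generation off Kleidman's list of maximal subgroups, for $p$ odd it locates elements of order $(q^2-1)/d$ inside $SU_3(q)$ subgroups and again uses {\sf CHEVIE}, and coprimality is exactly the Zsigmondy argument you give. (One small caveat on your variant for $p$ odd: Gow's theorem needs the two classes being multiplied to consist of \emph{regular} semisimple elements, and you have not verified that elements of order $(q^2-1)/d$ are regular; the paper avoids this by computing the structure constant directly.)

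The genuine gap is in your generation argument for the odd triple $(\Phi_3(q),\Phi_3(q),\Phi_6(q))$. You claim that the only maximal subgroups meeting the class of $\Phi_6(q)$-elements are the torus normaliser and the subgroups $G_2(q)$, and that none of these contain elements of order $\Phi_3(q)$. The second claim is false for $G_2(q)$: since $|G_2(q)|=q^6(q^6-1)(q^2-1)$ and $q^6-1=(q-1)(q+1)(q^2+q+1)(q^2-q+1)$, the subgroup $G_2(q)$ has cyclic maximal tori of orders $q^2+q+1$ and $q^2-q+1$ and therefore contains elements of order $\Phi_3(q)$ \emph{and} elements of order $\Phi_6(q)$ (and when $q$ is a square the subfield subgroup $^3D_4(q^{1/2})$ causes the same problem). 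So a triple of the prescribed type could a priori lie inside a conjugate of $G_2(q)$, and Gow's theorem alone does not rule this out; one must compare the structure constant in $G$ with the total contribution coming from all conjugates of $G_2(q)$, or argue some other way. The paper sidesteps this entirely by citing \cite[Lemma 5.24]{fmp} for the existence of a hyperbolic generating triple of type $(\Phi_3(q),\Phi_3(q),\Phi_6(q))$; that is where the necessary counting lives, and your proposal needs either that citation or a replacement for it.
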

\begin{proof} Let $G$ be as in the hypothesis. By \cite[Lemma 5.24]{fmp} for $q > 2$ there exists a hyperbolic generating triple of type $(\Phi_3(q),\Phi_3(q),\Phi_6(q))$.

For $p = 2$ one can verify using {\sf CHEVIE} to compute the structure constants that there exist pairs of elements of order $6$ whose product has order $\Phi_{12}(q)$ and it is clear from the list of maximal subgroups that this is indeed an even triple for $G$.

For $p \neq 2$ elements of order $\frac{q^2-1}{d}$ exist since $G$ contains subgroups isomorphic to $SU(3,q)$. Using {\sf CHEVIE} and the list of maximal subgroups it can be shown that an odd triple of type $(\frac{q^2-1}{d},\frac{q^2-1}{d},\Phi_{12}(q))$ exists for $G$. By Zsigmondy's theorem, Theorem \ref{zsig}, it is also clear that we have coprimeness for both Beauville structures. \end{proof}

\begin{lemma} Let $G$ be the Steinberg triality groups of type $^3D_4(q)$ for $q \geq 3$ and $d=$ gcd$(3,q+1)$. Then \begin{enumerate}
\item for $p=2$ there exists a mixable Beauville structure on $G \times G$ of type $$(6,6\Phi_{12}(q),6\Phi_{12}(q);\Phi_3(q),\Phi_3(q)\Phi_6(q),\Phi_3(q)\Phi_6(q));$$
\item for $p \neq 2$ there exists a mixable Beauville structure on $G$ of type $$\left(\frac{q^2-1}{d},\Phi_{12}(q)\frac{q^2-1}{d},\Phi_{12}(q)\frac{q^2-1}{d};\Phi_3(q),\Phi_3(q)\Phi_6(q),\Phi_3(q)\Phi_6(q)\right).$$
\end{enumerate}
\end{lemma}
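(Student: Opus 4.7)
The plan is to apply Lemma \ref{cop} (and the Remark following it) to the two hyperbolic generating triples that Lemma \ref{tri} provides for $G$, thereby lifting the mixable Beauville structure on $G$ to one on $G\times G$. After this reduction the whole argument is essentially bookkeeping: identify a coprime pair of orders in each triple, invoke Lemma \ref{cop}, and confirm coprimality of the two resulting $\nu$-invariants.

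First I would handle the even triple, which has type $(6,6,\Phi_{12}(q))$ when $p=2$ and $((q^2-1)/d,(q^2-1)/d,\Phi_{12}(q))$ when $p$ is odd. In both cases the element of order $\Phi_{12}(q)$ is coprime to each of the others, since $\Phi_{12}(q)$, being a primitive prime divisor of $q^{12}-1$, divides no $q^k-1$ with $k<12$, and in particular divides neither $6$ nor $q^2-1$. Cyclically permuting the triple so that the coprime pair occupies the first two slots, which preserves the relation $xyz=1$, Lemma \ref{cop} yields a hyperbolic generating triple for $G\times G$ of type $(6\Phi_{12}(q),6\Phi_{12}(q),6)$, respectively $((q^2-1)\Phi_{12}(q)/d,(q^2-1)\Phi_{12}(q)/d,(q^2-1)/d)$. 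Choosing the generating pair $(x_1,y_1)$ of the mixable Beauville structure to consist of the entry of smallest order together with one of its two neighbours in the triple recovers the stated type, and both distinguished orders are even.

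I would apply the same construction to the odd triple $(\Phi_3(q),\Phi_3(q),\Phi_6(q))$. By Zsigmondy's Theorem $\Phi_3(q)$ and $\Phi_6(q)$ are distinct primes (primitive prime divisors of $q^3-1$ and $q^6-1$ respectively), so they furnish the coprime pair required by Lemma \ref{cop}; after the appropriate cyclic permutation the lemma produces a hyperbolic generating triple for $G\times G$ of type $(\Phi_3(q)\Phi_6(q),\Phi_3(q)\Phi_6(q),\Phi_3(q))$, whose entries are all odd.

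It remains to verify that these two generating pairs satisfy Definition \ref{mixable}. Perfectness of $G\times G$ is immediate from the simplicity of $G$. The primes dividing $\nu$ of the lifted even pair are $\{2,3,\Phi_{12}(q)\}$ when $p=2$, or $\{\Phi_{12}(q)\}$ together with the primes dividing $(q^2-1)/d$ when $p$ is odd; the primes dividing $\nu$ of the lifted odd pair are $\{\Phi_3(q),\Phi_6(q)\}$. Since $\Phi_3(q),\Phi_6(q),\Phi_{12}(q)$ are pairwise distinct primitive prime divisors, and since $\Phi_3(q),\Phi_6(q)$ do not divide any $q^k-1$ for $k<3$, respectively $k<6$, they are coprime both to $6$ and to $q^2-1$, hence to every prime appearing in the even $\nu$. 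This bookkeeping is the only moderately delicate point of the argument; the substantive work has already been carried out in Lemmas \ref{cop}, \ref{tri} and in Zsigmondy's Theorem.
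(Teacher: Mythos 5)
Your proposal is correct and follows essentially the same route as the paper: lift the two hyperbolic generating triples of Lemma \ref{tri} to $G\times G$ via Lemma \ref{cop}, using the coprimality of $\Phi_{12}(q)$ with $6$ (resp.\ $(q^2-1)/d$) and of $\Phi_3(q)$ with $\Phi_6(q)$, and then check that the two resulting $\nu$-invariants are coprime via Zsigmondy. The paper's proof is just a terser version of the same bookkeeping.
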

\begin{proof} By Lemmas \ref{cop} and \ref{tri} we need only verify that gcd$(6,\Phi_{12}(q))=1$ for $p=2$ as the rest follows by construction. This is clear since $\Phi_{12}(q)$ is both odd and coprime to $q^2-1$ which is divisible by 3. \end{proof}

\section{The Sporadic groups}

\begin{table}
\centering
\begin{tabular}{l c c c c c} \hline
$G$			&$x_1$						&$y_1$								&$x_2$						&$y_2$\\ \hline
$M_{11}$		&$ab(ab^2)^2$				&$(ab(ab^2)^2)^{b}$				&$(ab)^5$					&$[a,b]^2$\\
$M_{12}$		&$(ab)^4ba(bab)^2b$		&$x_1^{b^2aba}$					&$ab$						&$ba$\\
$J_1$			&$(ab)^2(ba)^3b^2ab^2$	&$b^{aba}$							&$ab$						&$[a,b]$\\
$M_{22}$		&$(ab)^3b^2ab^2$			&$ba(bab^2)^2a$					&$ab$						&$(ab)^4b^2ab^2$\\
$J_2$			&$ab(abab^2)^2ab^2$		&$x_1^{ab^2}, x_1^{(ba)^2b^2}$	&$ab$						&$ba$\\
$M_{23}$		&$abab^2$					&$babab$							&$ab$						&$(abab)^{ba}$\\
$^2F_4(2)'$	&$(ab)^3bab$				&$(x_1^7)^{baba}$					&$ab$						&$ba$\\
$HS$			&$abab^3$					&$b^3aba$							&$(ab)^3b$					&$((ab)^3b)^b$\\
$J_3$			&$(ab)^2(ba)^3b^2$		&$x_1^b$							&$ab$						&$(ab)^b$\\
$M_{24}$		&$(ab)^4b$					&$(x_1^3)^b$						&$ab$						&$ba$\\
$McL$			&$ab^2$					&$bab$								&$ab$						&$(ab)^b$\\
$He$			&$ab^3$					&$ab^4$							&$ab$						&$ba$\\
$Ru$			&$b$						&$b^{(ab)^5}$						&$(ab)^2$					&$(ba)^2$\\
$Suz$			&$(ab(abab^2)^2)$			&$x_1^{abab^2}$					&$ab$						&$ba$\\
$O'N$			&$[a,b]$					&$([a,b]^2)^{bab}$					&$ab^2$					&$(ab^2)^{abab}$\\
$Co_3$			&$ab$						&$(ab)^{b^2}$						&$a(ab)^2b(a^2b)^2b$		&$(x_2^3)^{baba}$\\
$Co_2$			&$a$						&$b$								&$ab(ab^2)^2b$			&$(x_2^2)^{bab^2}$\\
$Fi_{22}$		&$(ab)^3b^3$				&$((ab)^3b^3)^a$					&$b$						&$b^a$\\
$HN$			&$a$						&$b$								&$[a,b]^a$					&$(ab)^2b((ab)^5(ab^2)^2)^2$\\
$Ly$			&$a$						&$b$								&$abab^3$					&$x_2^{(ab)^7}$\\
$Th$			&$[a,b]$					&$[a,b]^{(ba)^4b^2}$				&$ababa$					&$(x_2^5)^{bab}$\\
$Fi_{23}$		&$a$						&$b$								&$((ab)^{11}b)^3$			&$x_2^a$\\
$Co_1$			&$a$						&$b$								&$[(ab)^3,aba]$			&$[(ab)^{23},ab^2]^{babab}$\\
$J_4$			&$a$						&$b$								&$a(bab)^3(ab)^2b$		&$x_2^a$\\
$Fi_{24}'$		&$ab$						&$((ab)^6b)^{15} $					&$b(ba)^3$					&$(x_2^3)^{bab}$\\ \hline
\end{tabular}
\caption{Words in the standard generators of $G$ \cite{brauer} where $(x_1,y_1,x_2,y_2)$ is a mixable Beauville structure for $G$ of the specified type.} \label{table}
\end{table}

\begin{table}
\centering
\begin{tabular}{l c c c c c} \hline
$G$			&Type of $G$				&Type of $G \times G$\\ \hline
$M_{11}$		&$(8,8,5;11,3,11)$			&$(8,40,40;11,33,33)$\\
$M_{12}$		&$(8,8,5;11,11,3)$			&$(8,40,40;11,33,33)$\\
$J_1$			&$(10,3,10;7,19,19)$		&$(10,30,30;133,133,19)$\\
$M_{22}$		&$(8,8,5;11,7,7)$			&$(8,40,40;77,77,7)$\\
$J_2$			&$(10,10,10;7,7,3)$		&$(10,10,80;7,21,21)$\\
$M_{23}$		&$(8,8,11;23,23,7)$		&$(8,88,88;23,161,161)$\\
$^2F_4(2)'$	&$(8,8,5;13,13,3)$			&$(8,40,40;13,39,39)$\\
$HS$			&$(8,8,15;7,7,11)$			&$(8,120,120;7,77,77)$\\
$J_3$			&$(8,8,5;19,19,3)$			&$(8,40,40;19,57,57)$\\
$M_{24}$		&$(8,8,5;23,23,3)$			&$(8,40,40;23,69,69)$\\
$McL$			&$(12,12,7;11,11,5)$		&$(84,84,12;55,55,11)$\\
$He$			&$(8,8,5;17,17,7)$			&$(8,40,40;17,119,119)$\\
$Ru$			&$(4,4,29;13,13,7)$		&$(4,116,116;13,91,91)$\\
$Suz$			&$(8,8,7;13,13,3)$			&$(8,56,56;13,39,39)$\\
$O'N$			&$(12,6,31;19,19,11)$		&$(12,186,186;209,209,19)$\\
$Co_3$			&$(14,14,5;23,23,9)$		&$(14,70,70;23,207,207)$\\
$Co_2$			&$(2,5,28;23,23,9)$		&$(10,10,28;23,207,207)$\\
$Fi_{22}$		&$(16,16,9;13,13,11)$		&$(144,144,16;143,143,13)$\\
$HN$			&$(2,3,22;5,19,19)$		&$(6,6,22;95,95,19)$\\
$Ly$			&$(2,5,14;67,67,37)$		&$(10,10,14;2479,2479,67)$\\
$Th$			&$(10,10,13;19,19,31)$		&$(130,130,10;589,589,19)$\\
$Fi_{23}$		&$(2,3,28;13,13,23)$		&$(6,6,28;299,299,13)$\\
$Co_1$			&$(2,3,40;11,13,23)$		&$(6,6,40;143,143,23)$\\
$J_4$			&$(2,4,37;43,43,23)$		&$(4,74,74;989,989,43)$\\
$Fi_{24}'$		&$(29,4,20;33,33,23)$		&$(116,116,20;759,759,33)$\\
$\BB$			&$(2,3,8;47,47,55)$ 		&$(6,6,8;47,2585,2585)$\\
$\MM$			&$(94,94,71;21,39,55)$		&$(94,6674,6674;21,2145,2145)$\\ \hline
\end{tabular}
\caption{Types of the mixable Beauville structures for $G$ and $G \times G$ from the words in Table \ref{table} and Lemmas \ref{baby} and \ref{monster}.}
\end{table}

In this section we exhibit explicit words in the standard generators \cite{brauer} of mixable Beauville structures for the sporadic groups. With the exception of the even triple for Janko's group, $J_2$, at least two elements of every triple have coprime order which, by Lemma \ref{cop}, automatically gives us a corresponding triple for $G \times G$. In the case of $J_2$ we have two even triples, $(x_1,x_1^{ab^2})$ and $(x_1,x_1^{(ab)^2b^2})$ of types $(10,10,10)$ and $(10,10,8)$, which are inequivalent by Lemma \ref{inequi}. For groups of reasonable order it can be explicitly computed in GAP that these elements generate. For groups of larger order we appeal to the list of maximal subgroups as found in \cite{ATLAS} and \cite{raw} to ensure generation. For the Monster $\MM$ and the Baby Monster $\BB$ we use less constructive methods to prove the existence of these structures.

\begin{lemma} \label{baby} There exists a mixable Beauville structure on the Baby Monster, $\BB$, and on $\BB \times \BB$. \end{lemma}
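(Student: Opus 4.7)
The plan is to produce a mixable Beauville structure on $\BB$ of type $(2,3,8;47,47,55)$ --- that is, an even hyperbolic generating triple of type $(2,3,8)$ (two of whose orders, $2$ and $8$, are even) and an odd hyperbolic generating triple of type $(47,47,55)$ --- and then to extend both to $\BB\times\BB$ via Lemma \ref{cop}. Coprimality of $\nu_1 = 2\cdot 3\cdot 8 = 48 = 2^{4}\cdot 3$ and $\nu_2 = 47^{2}\cdot 55 = 47^{2}\cdot 5\cdot 11$ is immediate from these factorisations.

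For the odd triple, the key arithmetic fact is that $47$ divides $|\BB|$ to the first power only, so a Sylow $47$-subgroup of $\BB$ is cyclic of order $47$ and its normaliser is the maximal subgroup $47{:}23$ of order $1081$. Inspection of the list of maximal subgroups of $\BB$ in \cite{ATLAS,raw} shows that $47{:}23$ is the unique (up to conjugacy) maximal overgroup of an element of order $47$ in $\BB$. Since $55\nmid 1081$, any pair $(x,y)$ of elements of order $47$ whose product has order $55$ automatically generates $\BB$. The existence of such a pair then reduces to checking that the Frobenius structure constant
\[
n(47A,47A,55A) \;=\; \frac{|47A|^{2}}{|\BB|}\sum_{\chi\in\mathrm{Irr}(\BB)}\frac{\chi(x)\,\chi(y)\,\chi((xy)^{-1})}{\chi(1)}
\]
is strictly positive, which one evaluates directly on the character table of $\BB$ in the GAP character table library.

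For the even triple of type $(2,3,8)$, I would perform an analogous structure constant computation on suitable classes $2X$, $3Y$, $8Z$ of $\BB$. The delicate step here is generation: unlike the $47$-local case, several classes of maximal subgroups of $\BB$ are capable of containing a $(2,3,8)$-triple, so positivity of the gross count is not on its own sufficient. One completes the argument in the usual way, by subtracting the contributions from each of the (finitely many) maximal overgroups --- using their known character tables or the permutation characters of $\BB$ on the relevant cosets --- and exhibiting that the residue is positive. Once both triples on $\BB$ are in hand, a single application of Lemma \ref{cop} to the even triple (using $\gcd(2,3)=1$) produces an even triple of type $(6,6,8)$ on $\BB\times\BB$, and one application to the odd triple cyclically reordered as $(y,xy,x)$ (using $\gcd(47,55)=1$) produces an odd triple of type $(2585,2585,47)$.

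The main obstacle is precisely the character-theoretic subtraction on the even side, since $\BB$ has many conjugacy classes of involutions, of elements of order $3$, and of elements of order $8$, and many maximal subgroups meeting them. The odd side, by contrast, is essentially forced by the arithmetic of $47$ and requires only positivity of a single character sum; no further subgroup-by-subgroup analysis is needed once the uniqueness of the $47$-local maximal subgroup has been invoked.
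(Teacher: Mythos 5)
Your overall strategy is the same as the paper's: a mixable structure of type $(2,3,8;47,47,55)$ on $\BB$, with generation of the $47$-triple forced by the fact that $47{:}23$ is the unique maximal overgroup of an element of order $47$ and contains no element of order $55$, followed by Lemma \ref{cop} to pass to $\BB\times\BB$ with types $(6,6,8)$ and $(47,2585,2585)$. On the odd side your structure-constant argument is a legitimate substitute for the paper's explicit words $x=(ab)^3(ba)^4b(ba)^2b^2$, $y=x^{ab^2}$ in the standard generators; both routes rest on the same $47$-local observation, and positivity of $n(47A,47A,55A)$ on the \textsc{Atlas} character table does finish that half.

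The genuine gap is on the even side. You correctly identify that positivity of a $(2,3,8)$ structure constant does not give generation, and you propose to subtract the contributions of all maximal overgroups --- but you do not carry this out, and you name it yourself as the main obstacle. As written, the existence of a $(2,3,8)$ hyperbolic generating triple for $\BB$ is therefore asserted rather than proved: the subtraction would require running over the full list of maximal subgroups of $\BB$ and their fusion of $2$-, $3$- and $8$-classes, which is a substantial computation you have not done. The paper sidesteps this entirely by citing Wilson's computation of the symmetric genus of the Baby Monster \cite{sgbm}, which already exhibits $\BB$ as a quotient of the $(2,3,8)$ triangle group; quoting that result (or some equivalent published $(2,3,8)$-generation statement) is the missing ingredient that closes your argument.
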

\begin{proof} From \cite{sgbm} we know there exists a hyperbolic generating triple of type $(2,3,8)$. Let $$x=(ab)^3(ba)^4b(ba)^2b^2, \; \; y=x^{ab^2}$$ be words in the standard generators \cite{brauer}. They both have order $47$ and their product has order 55, then from the list of maximal subgroups \cite{raw} they will generate $\BB$. This gives a mixable Beauville structure of type $(2,3,8;47,47,55)$ on $\BB$ and by Lemma \ref{cop} we have a mixable Beauville structure of type $(6,6,8;47,2585,2585)$ on $\BB \times \BB$.\end{proof}

\begin{lemma} \label{monster} There exists a mixable Beauville structure on the Monster, $\MM$, and on $\MM \times \MM$. \end{lemma}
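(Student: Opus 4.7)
The plan is to mirror the Baby Monster argument of Lemma \ref{baby}: exhibit explicit words in the standard generators of $\MM$ giving an even and an odd hyperbolic generating triple with coprime $\nu$-values, and then invoke Lemma \ref{cop} for the $\MM \times \MM$ half. Reading off the table, the target types are $(94,94,71)$ and $(21,39,55)$; I first note that $\nu_1 = 2^2 \cdot 47^2 \cdot 71$ and $\nu_2 = 3^2 \cdot 5 \cdot 7 \cdot 11 \cdot 13$ share no common prime factor, so the coprimality condition will be automatic once the triples themselves are produced.

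For the even triple the key observation is the prime $71$: in the now-complete classification of the maximal subgroups of $\MM$, the group $L_2(71)$ is the unique maximal overgroup containing an element of order $71$. Since $47 \nmid |L_2(71)|$, this subgroup contains no element of order $94 = 2\cdot 47$. Consequently, if I exhibit $x_1,y_1 \in \MM$ both of order $94$ with $x_1y_1$ of order $71$, they must generate $\MM$. Such a pair can be produced as a word $x_1$ in the standard generators together with a suitable conjugate $y_1 = x_1^g$, with element orders verified using the online Atlas straight-line-program machinery.

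For the odd triple I would exhibit words $x_2$ of order $21$ and $y_2$ of order $39$ with $x_2y_2$ of order $55$, and verify generation by the same method: checking against the list of maximal subgroups of $\MM$ that no proper overgroup realises the configuration of orders $(21,39,55)$ simultaneously. Combining the two triples yields a mixable Beauville structure on $\MM$ of type $(94,94,71;21,39,55)$. Since $\gcd(94,71)=1$ and $\gcd(39,55)=1$, I then reorder each triple so that the coprime-order pair occupies the first two slots and apply Lemma \ref{cop}, producing a mixable Beauville structure on $\MM \times \MM$ of type $(94,6674,6674;21,2145,2145)$, where $6674 = 94 \cdot 71$ and $2145 = 39 \cdot 55$.

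The principal obstacle is verification of generation: $\MM$ has order roughly $8 \times 10^{53}$, so one cannot simply ask a computer algebra system to confirm $\langle x_i,y_i\rangle = \MM$ by enumeration, as was feasible for the smaller sporadics. The argument must instead be discharged by traversing the list of maximal subgroups of $\MM$ and using order divisibility to eliminate each candidate overgroup. The uniqueness of $L_2(71)$ among maximal subgroups of order divisible by $71$ does the heavy lifting on the even side, while the odd side demands a slightly more careful sweep through the handful of maximal subgroups whose orders are divisible by $5 \cdot 11$ or by $3 \cdot 13$, but remains finite and routine.
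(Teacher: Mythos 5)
Your overall strategy is sound and your arithmetic (coprimality of $2^2\cdot 47^2\cdot 71$ and $3^2\cdot 5\cdot 7\cdot 11\cdot 13$, the evenness of $94$, and the application of Lemma \ref{cop} after permuting each triple so that the two coprime orders come first) matches what is needed, but you reach the two triples by a genuinely different route. For the even triple the paper pivots on the elements of order $94$ rather than the element of order $71$: by Norton and Wilson \cite[Theorem 21]{norwil} the only maximal subgroups of $\MM$ containing an element of order $94$ are copies of $2\cdot\BB$, which has no element of order $71$, so any $(94,94,71)$ triple generates. Your pivot on $71$ and the uniqueness of $L_2(71)$ as a maximal overgroup is equally valid, but it leans on the full classification of maximal subgroups of $\MM$, which was not available when the paper was written; the Norton--Wilson statement is the weaker, already-published fact that suffices. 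Second, the paper establishes \emph{existence} of the $(94,94,71)$ triple by a structure-constant (character-table) computation rather than by exhibiting explicit words: for $\MM$ the character table is known and Frobenius's formula is cheap, whereas finding and order-checking words in the $196882$-dimensional representation, as you propose, is a much heavier computation that you assert rather than carry out. Third, for the odd triple the paper simply cites \cite{exc}, where a $(21,39,55)$ generating triple is constructed; your plan to re-derive it by sweeping the maximal subgroups is workable in principle but is not as ``routine'' as you suggest (one must rule out, for instance, subgroups such as $2\cdot\BB$ and $(D_{10}\times HN).2$ meeting all three orders simultaneously), and this is precisely the work the citation discharges. In short: your proof is a correct plan whose two existence claims remain to be executed, while the paper replaces those executions with a structure-constant count and a citation; your generation criterion for the even triple is an alternative, slightly stronger-hypothesis version of the paper's.
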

\begin{proof} Norton and Wilson \cite[Theorem 21]{norwil} show that the only maximal subgroups of the Monster which contain elements of order $94$ are copies of $2 \cdot \BB$ which does not contain elements of order $71$. Then from a computation of the structure constants an even triple of type $(94,94,71)$ can be shown to exist. Finally, in \cite{exc} it is shown that there exists a hyperbolic generating triple of type $(21,39,55)$ on $\MM$. Therefore we have a mixable Beauville structure of type $(94,94,71;21,39,55)$ on $\MM$ and of type $(94,6674,6674;21,2145,2145)$. This completes the proof. \end{proof}

Finally we have the following Lemma which completes the proof of Theorem \ref{main}.

\begin{lemma} Let $G$ be one of the 26 sporadic groups or the Tits group $^2F_4(2)'$. Then there exists a mixable Beauville structure on $G$ and $G \times G$. \end{lemma}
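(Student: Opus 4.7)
The plan is to verify, group-by-group, that the data recorded in Table \ref{table} actually does what it is claimed to do, and then to invoke either Lemma \ref{cop} or Lemma \ref{inequi} to lift each structure from $G$ to $G \times G$. Since explicit words in the standard generators are provided for every sporadic group and for the Tits group, the verification splits cleanly into three regimes: very small groups, groups of moderate order, and the two largest groups $\BB$ and $\MM$, which have already been handled in Lemmas \ref{baby} and \ref{monster}.

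First, I would check for each row of Table \ref{table} the three things that have to be checked: (i) the orders of the listed elements $x_i$, $y_i$, and their products $x_iy_i$ match the types listed in the second table; (ii) $\langle x_i, y_i \rangle = G$ for $i = 1, 2$; and (iii) $\nu(x_1, y_1)$ is coprime to $\nu(x_2, y_2)$. For $G$ up through, say, $Fi_{22}$, all three can be checked directly using the explicit representations available in the online ATLAS via GAP. For the larger groups ($HN$, $Ly$, $Th$, $Fi_{23}$, $Co_1$, $J_4$, $Fi_{24}'$), direct faithful computation may be expensive, so generation is established instead by consulting the lists of maximal subgroups in \cite{ATLAS,raw}: one shows that no maximal subgroup of $G$ simultaneously contains elements of the specified orders (often because one element has an order divisible by a Zsigmondy-type prime whose only occurrence is in a maximal subgroup incompatible with the order of the other element). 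The coprimality condition (iii) is then a one-line arithmetic check from the table of types.

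Next, I would promote each structure to a structure on $G \times G$. For every group except $J_2$, the types recorded in the second table show that at least two of the three orders $o(x_i), o(y_i), o(x_iy_i)$ are coprime in each triple, so Lemma \ref{cop} (together with the remark following it) applies directly and yields a hyperbolic generating triple of $G \times G$ of the type listed in the second column, with coprimality preserved. For $J_2$, Table \ref{table} supplies two distinct conjugates $y_1 = x_1^{ab^2}$ and $y_1' = x_1^{(ab)^2b^2}$ giving even triples $(x_1, y_1)$ and $(x_1, y_1')$ of types $(10,10,10)$ and $(10,10,8)$ respectively; since the multisets of orders differ, Lemma \ref{inequi} ensures that these triples are inequivalent under $\mathrm{Aut}(J_2)$, so $((x_1, x_1), (y_1, y_1'))$ generates $J_2 \times J_2$, giving the even half of the structure, while the odd triple of type $(7, 7, 3)$ admits the Lemma \ref{cop} treatment as above. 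Finally, the cases $G = \BB$ and $G = \MM$ are exactly Lemmas \ref{baby} and \ref{monster}.

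The only step I expect to be genuinely delicate is confirming generation for the largest groups, where direct computer verification inside a large matrix or permutation representation is expensive; there one has to argue non-constructively from the classification of maximal subgroups, using orders and Zsigmondy primes to rule out containment. Once this is in place, the remaining steps are routine order computations and applications of Lemmas \ref{cop} and \ref{inequi}, so the proof concludes by collecting all these verifications together with Lemmas \ref{baby} and \ref{monster}. This, combined with the earlier Lemmas covering the alternating and Lie-type families, completes the proof of Theorem \ref{main}.
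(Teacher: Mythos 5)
Your proposal matches the paper's own argument essentially verbatim: the paper verifies the words in Table \ref{table} by GAP for the groups of reasonable order, appeals to the lists of maximal subgroups in \cite{ATLAS,raw} for the larger ones, uses Lemma \ref{cop} for every triple with two coprime orders, treats $J_2$ via two inequivalent even triples and Lemma \ref{inequi}, and defers $\BB$ and $\MM$ to Lemmas \ref{baby} and \ref{monster}. This is correct and takes the same approach as the paper.
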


\end{document}